\numberwithin{equation}{section}
\def\image{\operatorname{image}}
\def\ker{\operatorname{ker}}
\def\dim{\operatorname{dim}}
\def\height{\operatorname{ht}}
\def\grade{\operatorname{grade}}
\def\ass{\operatorname{Ass}}
\def\spec{\operatorname{Spec}}
\def\ZZ{\mathbb Z}
\newcommand{\m}{\mathfrak m}
\newcommand{\ov}{\overline}
\newcommand{\lm}{{\lambda}}
\theoremstyle{plain}
\newtheorem{theorem}[equation]{Theorem}
\newtheorem{corollary}[equation]{Corollary}
\newtheorem{lemma}[equation]{Lemma}
\theoremstyle{definition}
\newtheorem{example}[equation]{Example}
\newtheorem{definition}[equation]{Definition}
\theoremstyle{remark}
\newtheorem{remark}[equation]{Remark}
\begin{document}
\title[ ] {Local Cohomology of Bigraded Rees Algebras and Normal Hilbert
Coefficients}
\author[]{Shreedevi K. Masuti and J. K. Verma}
\thanks{The first author is supported by the National Board for Higher
Mathematics, India}
\thanks{Key words : Normal Hilbert polynomial of two ideals, 
analytically unramified local ring, integral closure of an ideal, 
good joint reductions, local cohomology of Rees algebra, 
normal joint reduction number}
\address{Department of Mathematics, Indian Institute of Technology
Bombay, Powai, Mumbai, India - 400076}
\email{shreedevi@math.iitb.ac.in}
\email{jkv@math.iitb.ac.in}
\maketitle
\begin{abstract} Let $(R,\m)$ be an analytically unramified 
Cohen-Macaulay local ring of dimension $2$ with infinite residue 
field and  $\ov{I}$ be the integral closure of an ideal $I$ in $R$.
Necessary and sufficient conditions are given for 
$\ov{I^{r+1}J^{s+1}}=a\ov{I^rJ^{s+1}}+b\ov{I^{r+1}J^s}$ to hold 
$ \mbox{ for all }r \geq r_0 \mbox{ and }s \geq s_0$ in terms of 
vanishing of $[H^2_{(at_1,bt_2)}(\ov{\mathcal{R}^\prime}(I,J))]_{(r_0,s_0)}$, 
where $a \in I,b \in J$ is a good joint reduction of the filtration $\{\ov{I^rJ^s}\}.$
This is used to derive a theorem due to Rees on normal 
joint reduction number zero. The  vanishing of 
$\ov{e}_2(IJ)$ is shown to be equivalent to Cohen-Macaulayness of 
$\ov{\mathcal{R}}(I,J)$.
\end{abstract}
\section{Introduction}
\noindent Let $R$ be a commutative ring. Let $I$ 
be an ideal of $R$. An element  $x \in R$ is called integral over 
$I$ if $x$ satisfies the equation  
$$x^n+a_1x^{n-1}+\cdots+a_n=0$$ for some $a_i\in I^i, 
i=1,2,\ldots,n.$ The set $\ov{I}$ of elements that are integral 
over $I$ is an ideal, called the {\it integral closure} of $I$. 
If $I=\ov{I}$ then $I$ is called complete or integrally closed.
A  Noetherian local ring $(R,\m)$ is said to be 
{\it analytically unramified}  if its $\m-$adic completion is 
reduced. Let $I$ be an $\m-$primary ideal. In an analytically 
unramified local ring $R$ of dimension $d$, there exists a  polynomial 
$\ov{P}_I(x)\in \mathbb{Q}[x]$ of degree $d$ called the  
{\it normal Hilbert polynomial} of $I$, such that 
$$\lm(R/\ov{I^n})=\ov{P}_I(n) \mbox{ for }n \gg 0,$$
where $\lm(M) $ denotes the length of an $R$-module $M,$ 
\cite[Theorem 1.4]{rees1} and \cite[Theorem 1.1]{rees4}. We write 
\begin{eqnarray*}
 \ov{P}_I(n+1)=\ov{e}_0(I)\binom{n+d}{d}-\ov{e}_1(I)
\binom{n+d-1}{d-1}+\cdots+(-1)^d\ov{e}_d(I)
\end{eqnarray*}
for some integers $\ov{e}_i(I)$ $i=0,\ldots,d$. The coefficient 
$\ov{e}_0(I)=e(I)$ is called the {\it multiplicity of $ I.$} 
P. B. Bhattacharya \cite[Theorem 8]{b} showed that for $\m-$primary
ideals $I$ and $J$ in a Noetherian local ring $(R,\m)$ of dimension
$d$  there exist integers $e_{(i,j)}(I,J)$ such that for 
large $r,s$ 
\begin{eqnarray*}
 \lm(R/I^rJ^s)=\sum_{i+j \leq d}(-1)^{d-(i+j)}e_{(i,j)}(I,J)
 \binom{r+i-1}{i}\binom{s+j-1}{j}.
\end{eqnarray*}

\noindent Let $\mathbb N$ denote the set of nonnegative integers. 
Rees studied  the numerical function 
$$\ov{H}_{I,J}:\mathbb{N}^2 \longrightarrow \mathbb{N} 
\mbox{ defined as } \ov{H}_{I,J}(r,s)=\lm(R/\ov{I^rJ^s}).$$ He 
 proved  \cite{rees} that there exists a polynomial 
$\ov{P}_{I,J}(x,y) \in \mathbb{Q}[x,y]$ 
of total degree $d$ such that $\ov{P}_{I,J}(r,s)=\ov{H}_{I,J}(r,s)$ for $r,s \gg
0$ in an analytically unramified local ring. We write 
\begin{eqnarray*}
 \ov{P}_{I,J}(x,y)=\sum_{i+j \leq
d}(-1)^{d-(i+j)}\ov{e}_{(i,j)}(I,J)\binom{x+i-1}{i}\binom{y+j-1}{j}
\end{eqnarray*} 
for some integers $\ov{e}_{(i,j)}(I,J)$. Let 
\begin{eqnarray*}
\mbox{ extended Rees ring of } \mathcal I=\{I^rJ^s\}\mbox{ be }
{\mathcal R}^\prime(I,J)=\bigoplus_{r,s \in \mathbb{Z}} 
I^rJ^st_1^rt_2^s,\\
\mbox{ extended Rees ring of }\mathcal{I}=\{\ov{I^rJ^s}\} 
\mbox{ be }
\ov{{\mathcal R}^\prime}(I,J)=\bigoplus_{r,s \in \mathbb{Z}}
\ov{I^rJ^s}t_1^rt_2^s. 
\end{eqnarray*}
One of the main objectives of this paper is to understand an 
interesting 
theorem of  Rees \cite{rees} which asserts that  
$\ov{e}_2(IJ)=\ov{e}_2(I)+\ov{e}_2(J)$ for $\m-$primary ideals 
$I$ and $J$ in an analytically unramified 
Cohen-Macaulay local ring of dimension $2$ with infinite residue field if and only if for all $r,s \geq 0,$ 
\begin{equation} \label{eq3}
 \ov{I^{r+1}J^{s+1}}=a\ov{I^{r}J^{s+1}}+b\ov{I^{r+1}J^{s}},
\end{equation}
where $a \in I,b \in J$ is a good joint reduction of $I$ and $J$. See section $2$. 
As a consequence Rees proved that product of complete ideals is 
complete in $2-$dimensional pseudo-rational local rings. 
Rees showed that regular local rings are pseudo-rational and thus 
generalized Zariski's product theorem. Another consequence of Rees's
theorem is a formula for the Hilbert polynomial of an integrally
closed ideal in a two dimensional regular local ring. 
We generalize Rees's  theorem:
\begin{theorem} \label{thm11} Let $R$ be an analytically 
unramified Cohen-Macaulay local ring of
dimension $2$ with infinite residue field and $(a,b)$ be a good joint reduction of 
the filtration $\{\ov{I^rJ^s}\}$. Let $r_0,s_0 \geq 0$. Then following statements are equivalent:\\
$(1)\;
\ov{e}_2(I)+\ov{e}_2(J)-\ov{e}_2(IJ)=\lm(R/\ov{I^{r_0}J^{s_0}})-g_{r_0}(I,J)-h_{
s_0}(I,J)-r_0s_0e(I|J),$\\
$(2)\; [H^2_{(at_1,bt_2)}(\ov{\mathcal{R}^\prime}(I,J)]_{(r_0,s_0)}=0,\\$
$(3)\; \ov{I^{r+1}J^{s+1}}=a\ov{I^rJ^{s+1}}+b\ov{I^{r+1}J^s} \mbox{ for }r \geq
r_0,~s\geq s_0,$\\
where $e(I|J)=e_{(1,1)}(I,J)$ and $g_{r_0}(I,J),h_{s_0}(I,J)$ satisfy
\begin{eqnarray*}
 \lm(\ov{J^s}/\ov{I^{r_0}J^s})&=&e(I|J)r_0s+g_{r_0}(I,J) \mbox{ for }s \gg
0\mbox{ and }\\
\lm(\ov{I^r}/\ov{I^rJ^{s_0}})&=&e(I|J)rs_0+h_{s_0}(I,J) \mbox{ for }r \gg 0 .
\end{eqnarray*}
\end{theorem}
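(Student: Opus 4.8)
The plan is to route everything through the bigraded top local cohomology $H^2_{(at_1,bt_2)}(\ov{\mathcal{R}^\prime}(I,J))$, proving $(2)\Leftrightarrow(3)$ by a Koszul/Čech analysis and $(1)\Leftrightarrow(2)$ by a length computation. Since $at_1$ and $bt_2$ are homogeneous of bidegrees $(1,0)$ and $(0,1)$, I would compute the top cohomology from the Čech complex on $at_1,bt_2$, whose top term is the cokernel of $\ov{\mathcal{R}^\prime}_{at_1}\oplus\ov{\mathcal{R}^\prime}_{bt_2}\to\ov{\mathcal{R}^\prime}_{at_1bt_2}$. Reading off the bidegree $(r,s)$ component and clearing denominators by $(ab)^k$ gives the identification
$$[H^2_{(at_1,bt_2)}(\ov{\mathcal{R}^\prime}(I,J))]_{(r,s)} = \varinjlim_{k}\frac{\ov{I^{r+k}J^{s+k}}}{a^k\ov{I^rJ^{s+k}}+b^k\ov{I^{r+k}J^s}},$$
the transition maps being multiplication by $ab$. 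The $k=0$ term is zero and the $k=1$ term is exactly $\ov{I^{r+1}J^{s+1}}/(a\ov{I^rJ^{s+1}}+b\ov{I^{r+1}J^s})$, the cokernel measured by $(3)$; this is the link I would exploit throughout.

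\emph{The equivalence $(2)\Leftrightarrow(3)$.} Two structural facts drive this. First, because $(a,b)$ is a \emph{good} joint reduction, the machinery of Section~2 yields that $at_1,bt_2$ form a regular sequence on $\ov{\mathcal{R}^\prime}(I,J)$; in particular the transition maps in the colimit above are injective, so $[H^2]_{(r,s)}=0$ if and only if its $k=1$ term vanishes, i.e. if and only if the equation in $(3)$ holds at $(r+1,s+1)$. Second, $at_1$ acts invertibly on $\ov{\mathcal{R}^\prime}_{at_1bt_2}$, hence \emph{surjectively} on the cokernel $H^2$; as a degree-$(1,0)$ map this gives surjections $[H^2]_{(r,s)}\twoheadrightarrow[H^2]_{(r+1,s)}$, and likewise $bt_2$ gives $[H^2]_{(r,s)}\twoheadrightarrow[H^2]_{(r,s+1)}$. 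Therefore the single vanishing in $(2)$ propagates to $[H^2]_{(r,s)}=0$ for all $r\geq r_0,\ s\geq s_0$, which by the first fact is equivalent to $(3)$ holding on the whole quadrant. I would also verify $(3)\Rightarrow(2)$ directly: iterating $(3)$ and absorbing cross terms via $a\ov{I^rJ^s}\subseteq\ov{I^{r+1}J^s}$ and $b\ov{I^rJ^s}\subseteq\ov{I^rJ^{s+1}}$ yields $\ov{I^{r_0+k}J^{s_0+k}}=a^k\ov{I^{r_0}J^{s_0+k}}+b^k\ov{I^{r_0+k}J^{s_0}}$ for every $k$, so each term of the colimit at $(r_0,s_0)$ already vanishes.

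\emph{The equivalence $(1)\Leftrightarrow(2)$.} This I would obtain from a length computation. Conceptually, $(at_1,bt_2)$ cuts out, inside $\ov{\mathcal{R}^\prime}(I,J)$, the bigraded irrelevant locus \emph{except along the two coordinate axes} $\{s=0\}$ and $\{r=0\}$, so the difference between $\lm([H^2_{(at_1,bt_2)}]_{(r_0,s_0)})$ and the genuine deviation of the normal Hilbert function from its polynomial is supported on these axes and is governed by the one-variable normal Hilbert polynomials $\ov{P}_I$ and $\ov{P}_J$. Making this precise, using $H^0=H^1=0$ (from the regular sequence) and additivity of lengths along $K_\bullet(at_1,bt_2)$, together with the defining relations $\lm(\ov{J^s}/\ov{I^{r_0}J^s})=e(I|J)r_0s+g_{r_0}$ and $\lm(\ov{I^r}/\ov{I^rJ^{s_0}})=e(I|J)rs_0+h_{s_0}$ to extract the axis contributions, I would arrive at the identity
$$\lm\big([H^2_{(at_1,bt_2)}(\ov{\mathcal{R}^\prime})]_{(r_0,s_0)}\big)=\lm(R/\ov{I^{r_0}J^{s_0}})-g_{r_0}-h_{s_0}-r_0s_0\,e(I|J)-\big(\ov{e}_2(I)+\ov{e}_2(J)-\ov{e}_2(IJ)\big),$$
where $\ov{e}_2(I),\ov{e}_2(J),\ov{e}_2(IJ)$ enter as the constant terms of $\ov{P}_I,\ov{P}_J$ and of the diagonal polynomial $\ov{P}_{IJ}(n)=\ov{P}_{I,J}(n,n)$. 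Since lengths are nonnegative, the vanishing of the left-hand side is exactly the assertion $(1)$, giving $(1)\Leftrightarrow(2)$.

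\emph{Main obstacle.} The delicate point is not the formal homological algebra but securing the regular-sequence behaviour of $at_1,bt_2$ on $\ov{\mathcal{R}^\prime}(I,J)$ from the hypothesis that $(a,b)$ is a \emph{good} joint reduction; this injectivity of the transition maps is precisely what converts the \emph{single} cohomological vanishing of $(2)$ into the \emph{quadrant-wide} family of equalities in $(3)$, and without it the colimit description alone is too weak. The second sticking point is the exact bookkeeping in the length identity above: disentangling the axis data $g_{r_0},h_{s_0}$ and the mixed multiplicity $e(I|J)=\ov{e}_{(1,1)}$ from the genuinely two-dimensional part and matching constant terms so that $\ov{e}_2(I)+\ov{e}_2(J)-\ov{e}_2(IJ)$ emerges with the correct sign. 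I expect both to be handled by the good-joint-reduction analysis of Section~2.
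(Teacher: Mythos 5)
Your overall strategy coincides with the paper's: both identify $[H^2_{(at_1,bt_2)}(\ov{\mathcal{R}^\prime})]_{(r,s)}$ with $\varinjlim_k \ov{I^{r+k}J^{s+k}}/(a^k\ov{I^rJ^{s+k}}+b^k\ov{I^{r+k}J^s})$, prove injectivity of the multiplication-by-$ab$ transition maps from the good-joint-reduction colon conditions, verify $(3)\Rightarrow(2)$ by iterating $(3)$ to kill every term of the limit, and obtain $(1)\Leftrightarrow(2)$ from a length formula for the top cohomology. The one place you genuinely diverge is in propagating the single vanishing $[H^2]_{(r_0,s_0)}=0$ across the quadrant: you use that $at_1$ and $bt_2$ act surjectively on the top \v{C}ech cohomology, giving surjections $[H^2]_{(r,s)}\twoheadrightarrow[H^2]_{(r+1,s)}$ and $[H^2]_{(r,s)}\twoheadrightarrow[H^2]_{(r,s+1)}$; the paper instead first deduces $\ov{I^{r_0+r}J^{s_0+s}}=a^r\ov{I^{r_0}J^{s_0+s}}+b^s\ov{I^{r_0+r}J^{s_0}}$ for $r,s\gg 0$ (the transition maps are eventually surjective because the lengths stabilize) and then runs a descending induction using $(a)\cap\ov{I^rJ^s}=a\ov{I^{r-1}J^s}$ and $(b)\cap\ov{I^rJ^s}=b\ov{I^rJ^{s-1}}$. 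Your surjectivity argument is a correct general fact and is cleaner; the paper's route yields the stronger explicit identity $\ov{I^{r_0+r}J^{s_0+s}}=a^r\ov{I^{r_0}J^{s_0+s}}+b^s\ov{I^{r_0+r}J^{s_0}}$ for all $r,s\geq 1$ as a byproduct.

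Two corrections. First, your displayed length identity has the wrong sign: the correct formula is
$$\lm\bigl([H^2_{(at_1,bt_2)}(\ov{\mathcal{R}^\prime})]_{(r_0,s_0)}\bigr)=\bigl(\ov{e}_2(I)+\ov{e}_2(J)-\ov{e}_2(IJ)\bigr)+g_{r_0}(I,J)+h_{s_0}(I,J)+r_0s_0e(I|J)-\lm(R/\ov{I^{r_0}J^{s_0}}),$$
as one checks at $(r_0,s_0)=(0,0)$, where the length equals $\ov{e}_2(I)+\ov{e}_2(J)-\ov{e}_2(IJ)\geq 0$. The equivalence $(1)\Leftrightarrow(2)$ is insensitive to the sign, so nothing breaks, but the nonnegativity consequence one reads off is reversed under your convention. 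Second, the derivation of this identity is the real computational content and is only gestured at in your sketch; the paper obtains it by computing $\lm\bigl(\ov{I^{r+k}J^{s+l}}/(a^k\ov{I^rJ^{s+l}}+b^l\ov{I^{r+k}J^s})\bigr)$ for $k,l\gg 0$ via an explicit short exact sequence whose exactness rests on the nontrivial fact $(b^n)\cap\ov{I^rJ^{s+n}}=b^n\ov{I^rJ^s}$ (itself an induction from the good joint reduction), together with Rees's mixed multiplicity theorem $\lm(R/(a^k,b^l))=kl\,e(I|J)$ and the asymptotic expansions of $\lm(R/\ov{I^rJ^{s+l}})$ and $\lm(R/\ov{I^{r+k}J^s})$; your ``additivity of lengths along the Koszul complex'' would need to be fleshed out into something of this kind. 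Finally, your claim that $[H^2]_{(r,s)}=0$ ``if and only if'' the $k=1$ term vanishes is only an implication in one direction (injectivity gives colimit zero $\Rightarrow$ first term zero, not conversely), but since you verify $(3)\Rightarrow(2)$ separately by showing every term of the colimit vanishes, the logic of the proof is intact.
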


\noindent We derive a formula for $\lm([H^2_{(at_1,bt_2)}
(\ov{\mathcal{R}^\prime}(I,J))]_{(r,s)})$ in terms of 
Hilbert coefficients. The above theorem also gives a cohomological
interpretation of Rees's theorem since   
$$\lm([H^2_{(at_1,bt_2)}(\ov{\mathcal{R}^\prime}(I,J))]_{(0,0)})
=\ov{e}_2(I)+\ov{e}_2(J)-\ov{e}_2(IJ).$$

We will gather some preliminary results about existence of 
good joint reductions in Section 2. 

In Section 3, we calculate
the local cohomology of bigraded extended Rees algebra of 
the filtration $\{\ov{I^rJ^s}\}.$ 

In Section 4,  a new proof of 
Rees's theorem and its generalization are obtained. 
We give an application of Theorem \ref{thm11} to the normal 
reduction number of an ideal by deriving a  result of T. Marley 
\cite[Corollary 3.8]{marleythesis} which asserts 
that $\ov{r}(I) \leq k+1$ if and only if $\lm(R/\ov{I^k})
=\ov{P}_I(k)$.

In section 5, we study vanishing of $\ov{e}_2(IJ)$. 
We prove that  the vanishing of $\ov{e}_2(IJ)$ is equivalent to 
Cohen-Macaulayness of $\ov{\mathcal{R}}(I,J)$, where 
$$\displaystyle{\ov{\mathcal{R}}(I,J)=\bigoplus_{r,s \geq 0}
\ov{I^rJ^s}t_1^rt_2^s}=\mbox{ the  Rees ring of the filtration }
\mathcal{I}=\{\ov{I^rJ^s}\}.$$
 
We refer \cite{bruns-herzog} for all undefined terms.\\
{\bf Acknowledgement:} We thank Manoj Kummini for many clarifications and 
helpful conversations
which improved the paper.

\section{Preliminary Results}
\noindent Let $(R,\m)$ be a Noetherian local ring 
of dimension $2$ and $I$ and $J$ be $\m$-primary ideals in $R.$ 
D. Rees   introduced joint reductions in  \cite{rees3} for the 
filtration $\{I^rJ^s\}$. A sequence $(a,b)$ such that $a \in I$ and
$b \in J$ is called a {\it joint reduction} of the sequence of ideals 
$(I, J)$ if for all large $r,s$ we have 
$$I^rJ^s=aI^{r-1}J^s+bI^rJ^{s-1}.$$

\noindent A sequence
$(a,b)$ is a {\it joint reduction} of the filtration 
$\mathcal{I}=\{\ov{I^rJ^s}\}$
if $a \in I$ and $b \in J$ and for 
$r,s \gg 0$
$$\ov{I^rJ^s}=a\ov{I^{r-1}J^s}+b\ov{I^rJ^{s-1}}.$$ 
We say $(a,b)$ is a {\it good joint reduction} of 
$\mathcal{I}$ if $(a,b)$ is a joint reduction of $\mathcal{I}$ 
and
\begin{eqnarray*}
 (a) \cap \ov{I^rJ^s}&=&a\ov{I^{r-1}J^s} \mbox{ for } s \in \ZZ 
\mbox{ and }r>0  \mbox{ and } \\
(b) \cap \ov{I^rJ^s} &=& b\ov{I^rJ^{s-1}} \mbox{ for }  
r \in \ZZ \mbox{ and }  s>0.
\end{eqnarray*}

\noindent Existence of joint reductions of the sequence $(I,J)$ in a
Noetherian local ring with infinite residue field was 
proved by Rees in \cite{rees3}. In this section we prove existence of good joint
reductions of $\mathcal I=\{\ov{I^rJ^s}\}$ in an 
analytically unramified Cohen-Macaulay local ring of dimension $2$ with infinite
residue field. We need an analogue of Rees's Lemma for
$\mathcal{I}=\{\ov{I^rJ^s}\}$ 
\cite[Lemma 1.2]{rees3}.

\begin{lemma} \em[Rees's Lemma]\label{r1.2}
 Let $(R,\m)$ be an analytically unramified local ring of dimension $d$ with
infinite residue field and $I,J$ be ideals in $R.$ 
Let $S$ be a finite set of prime ideals of $R$ not containing $IJ.$ Then there
exist $a \in I, b \in J$ not contained in any of 
the prime ideals in $S$ and integers $r_0,s_0$ such that 
\begin{eqnarray}\label{eq4}
(a) \cap \ov{I^rJ^s}&=&a\ov{I^{r-1}J^s} \mbox{ for any integer }s \mbox{ and }r
\geq r_0 \mbox{ and }  \\
\label{eq5}(b) \cap \ov{I^rJ^s} &=& b\ov{I^rJ^{s-1}} \mbox{ for any integer } r
\mbox{ and } s \geq s_0.
\end{eqnarray}
\end{lemma}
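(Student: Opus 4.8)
The plan is to deduce this from the corresponding statement for ordinary powers, which is Rees's Lemma \cite[Lemma 1.2]{rees3}, by transferring that argument to the integral-closure filtration through the bigraded Rees algebra. The essential input is that, since $R$ is analytically unramified, the integral closure of the ordinary extended Rees ring $\mathcal{R}^\prime(I,J)=\bigoplus_{r,s\in\ZZ}I^rJ^st_1^rt_2^s$ in $R[t_1,t_1^{-1},t_2,t_2^{-1}]$ equals $\ov{{\mathcal R}^\prime}(I,J)=\bigoplus_{r,s\in\ZZ}\ov{I^rJ^s}t_1^rt_2^s$ and is a finitely generated $\mathcal{R}^\prime(I,J)$-module; in particular $\ov{{\mathcal R}^\prime}(I,J)$ is Noetherian. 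First I would record this finiteness (Rees's finiteness theorem, see \cite{rees1,rees4}), and fix the convention $\ov{I^rJ^s}=R$ for $r,s\le 0$, so that for $s\le 0$ the filtration collapses to $\{\ov{I^r}\}$ and \eqref{eq4} becomes the single-ideal colon condition.

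Next I would translate the two colon conditions into non-zerodivisor statements. Writing $M=\ov{{\mathcal R}^\prime}(I,J)$ as a module over $\mathcal{R}^\prime(I,J)$, multiplication by $at_1$ carries $M_{(r-1,s)}=\ov{I^{r-1}J^s}$ onto $a\ov{I^{r-1}J^s}$, and one checks that \eqref{eq4} holds for all $r\ge r_0$ and all $s$ exactly when the image $a^\ast$ of $at_1$ in bidegree $(1,0)$ is a non-zerodivisor in high $t_1$-degree on the bigraded quotient $M/t_1^{-1}M=\bigoplus_{r,s}\ov{I^rJ^s}/\ov{I^{r+1}J^s}$; symmetrically \eqref{eq5} corresponds to $bt_2$ being a non-zerodivisor in high $t_2$-degree on $M/t_2^{-1}M$. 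Because $R$ is reduced, I would also arrange that $a$ and $b$ avoid the finitely many minimal primes of $R$, so that $a,b$ are non-zerodivisors on $R$ and the colon conditions are not obscured by $(0:a)$ or $(0:b)$.

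The heart of the proof is then a multigraded prime-avoidance argument, which I expect to be the main obstacle. Since $M/t_1^{-1}M$ is a finitely generated module over the standard bigraded ring $\mathcal{R}^\prime(I,J)/t_1^{-1}\mathcal{R}^\prime(I,J)$, its set of associated primes is finite; the non-zerodivisor-in-high-degree property of $a^\ast$ amounts to $a^\ast$ avoiding those associated primes that do not contain the irrelevant ideal in the $t_1$-direction, and the existence of a uniform bound $r_0$ valid for \emph{every} $s$ follows from finiteness of this set together with the fact that the irrelevant-torsion submodule is finitely generated, hence concentrated in bounded $t_1$-degree. The delicate point is that $a^\ast$ lives in the single bidegree $(1,0)$ (coming from $a\in I$), so one must know that each relevant associated prime fails to contain the degree-$(1,0)$ piece $\ov{I}t_1$; this is where I would follow Rees's reduction in \cite[Lemma 1.2]{rees3}, using that the ordinary associated graded ring is generated in $t_1$-degree one and that the residue field is infinite to choose a single $a\in I$ whose initial form avoids all relevant primes simultaneously and, at the same time, avoids every prime in the prescribed finite set $S$ (possible since $I\not\subseteq P$ for $P\in S$). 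Choosing $b\in J$ by the symmetric construction and taking $r_0,s_0$ to be the resulting degree bounds completes the proof.
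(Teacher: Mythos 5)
The paper does not actually prove this lemma: it is stated as the analogue, for the filtration $\{\ov{I^rJ^s}\}$, of Rees's Lemma and is imported wholesale from \cite[Lemma 1.2]{rees3}, so there is no in-paper argument to compare yours against. Judged on its own terms, your plan is a faithful reconstruction of Rees's route and the one the paper implicitly relies on: Rees's finiteness theorem for analytically unramified rings, translation of the two colon conditions into ``$at_1$ is a nonzerodivisor in large $t_1$-degree on $\bigoplus_{r,s}\ov{I^rJ^s}/\ov{I^{r+1}J^s}$'', and graded prime avoidance over the infinite residue field. This is also exactly the machinery the paper itself uses (Lemma \ref{p} through Lemma \ref{l}) to upgrade the conclusion from $r\gg 0$ to all $r>0$, so your approach is consistent with the surrounding text.

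Three points in your sketch would need to be written out before it is a proof. First, the uniformity of $r_0$ in $s$: the claim that the $t_1$-irrelevant torsion is concentrated in bounded $t_1$-degree is correct, but it uses that the associated graded module is finite over the \emph{standard} bigraded ring $\bigoplus I^rJ^s/I^{r+1}J^s$, so that a homogeneous ring element of $t_1$-degree $p$ lies in the $p$-th power of the $t_1$-irrelevant ideal; this inference is not available degree-by-degree over the non-standard ring $\ov{\mathcal R^\prime}(I,J|I)$ itself. Second, in passing from the nonzerodivisor statement back to $(a)\cap\ov{I^rJ^s}=a\ov{I^{r-1}J^s}$ for \emph{all} integers $s$, you must handle $z$ with $az\in\ov{I^rJ^s}$ but $z\notin\ov{J^s}=\ov{I^0J^s}$: such a $z$ has no initial form in the $s$-th row of the graded module, so the usual ``take the order of $z$'' step (which the paper's own Lemma \ref{l} also elides) needs either a descending induction terminating at a separately verified base case or a switch to the Rees-valuation description of $\ov{I^rJ^s}$. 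Third, prime avoidance must land $a$ in $I$, not merely in $\ov I$; this is fine because $I\ov{\mathcal R^\prime}$ and $\ov I\,\ov{\mathcal R^\prime}$ have the same radical (as in the proof of Lemma \ref{p}), so a relevant associated prime avoiding the $t_1$-irrelevant ideal also avoids $It_1$, but it is precisely the point you flagged and it should be made explicit. None of these is a wrong idea; they are the standard elaborations of Rees's argument.
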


\noindent Let the extended associated graded ring with respect to $I $ of 
$$\mathcal I =\{I^rJ^s\}\mbox{ be } \mathcal{R}^\prime(I,J|I)=\bigoplus_{r,s\in
\mathbb{Z}}I^rJ^s/I^{r+1}J^s \mbox{ and }$$ 
$$\mathcal{I}=\{\ov{I^rJ^s}\} \mbox{ be }
\ov{\mathcal{R}^\prime}(I,J|I)=\bigoplus_{r,s\in 
\mathbb{Z}}\ov{I^rJ^s}/\ov{I^{r+1}J^s}.$$
Similarly extended associated graded ring with respect to $J$ of  
\begin{eqnarray*}
 \mathcal I &=&\{I^rJ^s\}\mbox{ be } \mathcal{R}^\prime(I,J|J)=\bigoplus_{r,s\in
\mathbb{Z}}I^rJ^s/I^{r}J^{s+1} \mbox{ and } \\
\mathcal{I}&=&\{\ov{I^rJ^s}\} \mbox{ be }
\ov{\mathcal{R}^\prime}(I,J|J)=\bigoplus_{r,s\in
\mathbb{Z}}\ov{I^rJ^s}/\ov{I^{r}J^{s+1}}.
\end{eqnarray*}

\noindent For a bigraded $B_{(0,0)}$-algebra $B=\bigoplus_{r,s \in \mathbb Z} B_{(r,s)}$, 
let $B_{1}^+$ denote the ideal generated by $\bigoplus_{r \geq 1} B_{(r,s)}$, 
$B_2^+$ denote the ideal generated by $\bigoplus_{s \geq 1} B_{(r,s)}$. 
To prove equality for all $r>0$ in equation \ref{eq4} and all $s>0$ in equation \ref{eq5} we show that
$\grade(\ov{\mathcal{R}^\prime}(I,J|I)_1^+)$ and $\grade(\ov{\mathcal{R}^\prime}(I,J|J)_2^+) 
\geq 1 $. In order to prove this we need following lemmas. 
We put $ \mathcal{R}^\prime=\mathcal{R}^\prime(I,J),\ov{\mathcal{R}^\prime}=
\ov{\mathcal{R}^\prime}(I,J), u_i=t_i^{-1}$ for $i=1,2$ and $u=u_1u_2$.

\begin{lemma} \label{p}
Let $(R,\m)$ be an analytically unramified Cohen-Macaulay local ring of 
dimension $2$ and $I,J$ be $\m-$primary ideals. 
Then 
$$\height (u_1,\ov{\mathcal R^\prime}_1^+)=\height (u_2,\ov{\mathcal R^\prime}_2^+) =2.$$
 \end{lemma}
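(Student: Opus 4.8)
The plan is to reduce the height computation to a single dimension count and then read off that dimension from Rees's two--variable normal Hilbert polynomial. First I would record that $u_1=t_1^{-1}$, being a unit of the overring $R[t_1^{\pm 1},t_2^{\pm 1}]\supseteq\ov{\mathcal R^\prime}$, is a nonzerodivisor on $\ov{\mathcal R^\prime}$. Since $R$ is analytically unramified, $\ov{\mathcal R^\prime}$ is module--finite over the extended Rees ring $\mathcal R^\prime(I,J)=R[It_1,Jt_2,u_1,u_2]$; hence $\ov{\mathcal R^\prime}$ is Noetherian and universally catenary, and because $R$ is Cohen--Macaulay (so equidimensional) while the minimal primes of $\ov{\mathcal R^\prime}$ lie over those of $R$, the ring $\ov{\mathcal R^\prime}$ is equidimensional of dimension $\dim R+2=4$. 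Consequently, for every homogeneous ideal $\mathfrak a$ one has $\height(\mathfrak a)=4-\dim(\ov{\mathcal R^\prime}/\mathfrak a)$, so the assertion $\height(u_1,\ov{\mathcal R^\prime}_1^+)=2$ becomes the single statement $\dim\,\ov{\mathcal R^\prime}/(u_1,\ov{\mathcal R^\prime}_1^+)=2$.

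Next I would identify this quotient explicitly by a degreewise computation. In bidegree $(r,s)$ the ideal $(u_1)$ is $\ov{I^{r+1}J^s}t_1^rt_2^s$, so $\ov{\mathcal R^\prime}/(u_1)\cong\ov{\mathcal R^\prime}(I,J|I)$, whose $(r,s)$ component is $(\ov{I^rJ^s}/\ov{I^{r+1}J^s})t_1^rt_2^s$ and vanishes for $r<0$. The image of $\ov{\mathcal R^\prime}_1^+$ kills every component with $r\geq 1$, while in bidegree $(0,s)$ it meets the ring trivially: a product involving a generator of positive $r$--degree can land in $r=0$ only against a component of negative $r$--degree, which is zero. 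Hence $\ov{\mathcal R^\prime}/(u_1,\ov{\mathcal R^\prime}_1^+)\cong\bigoplus_{s\in\ZZ}(\ov{J^s}/\ov{IJ^s})t_2^s=:D$, a Noetherian $\ZZ$--graded ring all of whose graded pieces have finite length.

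It then remains to show $\dim D=2$. Using Rees's two--variable normal Hilbert polynomial $\ov P_{I,J}$, for $s\gg 0$ one has $\lm(\ov{J^s}/\ov{IJ^s})=\ov P_{I,J}(1,s)-\ov P_{I,J}(0,s)$. Substituting $x=1$ and $x=0$ into the binomial expansion of $\ov P_{I,J}$, the $i=0$ contributions cancel and the terms with $i\geq 1$ in $\ov P_{I,J}(0,s)$ vanish, leaving a polynomial of degree $1$ in $s$ with leading coefficient $e(I|J)$. Thus the Hilbert function of $D$ on its nonnegative part is eventually a degree--one polynomial, forcing $\dim D\geq 2$, while writing $D=B/\mathcal J$ with $B=\bigoplus_s\ov{J^s}t_2^s$ (of dimension $3$) and $\mathcal J=\bigoplus_s\ov{IJ^s}t_2^s\ni a$ for a nonzerodivisor $a\in I$ gives $\dim D\leq 2$; hence $\dim D=2$ and $\height(u_1,\ov{\mathcal R^\prime}_1^+)=4-2=2$. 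Interchanging the roles of $I,J$ and of $t_1,t_2$, the identical argument yields $\ov{\mathcal R^\prime}/(u_2,\ov{\mathcal R^\prime}_2^+)\cong\bigoplus_{r\in\ZZ}(\ov{I^r}/\ov{I^rJ})t_1^r$ of dimension $2$, whence $\height(u_2,\ov{\mathcal R^\prime}_2^+)=2$.

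The step I expect to be most delicate is the equidimensionality input that legitimises $\height(\mathfrak a)=4-\dim(\ov{\mathcal R^\prime}/\mathfrak a)$ for the non--maximal graded ideal $\mathfrak a$ (the analogous point also underlies the bound $\dim D\leq 2$). To secure it I would pass to $R/\mathfrak p$ for each minimal prime $\mathfrak p$ of $R$, where $\ov{\mathcal R^\prime}$ becomes a domain of dimension $4$ and the dimension formula for equidimensional universally catenary rings applies, and then reassemble over the finitely many minimal primes. A secondary technical point is verifying $\dim D=2$ carefully for the two--sided $\ZZ$--graded ring $D$, i.e.\ checking that its nonpositive--degree part, whose pieces all equal the finite--length module $R/\ov I$, contributes only the $u_2$--direction and does not raise the dimension beyond the value $2$ dictated by the degree--one growth of $\lm(\ov{J^s}/\ov{IJ^s})$.
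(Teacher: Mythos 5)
Your overall architecture is the same as the paper's: reduce to the single computation $\dim\,\ov{\mathcal R^\prime}/(u_1,\ov{\mathcal R^\prime}_1^+)=2$ and identify that quotient with $D=\bigoplus_{s\in\ZZ}\ov{J^s}/\ov{IJ^s}$, viewed as $\ov{\mathcal R^\prime}(J)/\mathcal J$ with $\mathcal J=\bigoplus_s\ov{IJ^s}t_2^s$. Where you diverge is in computing $\dim D$. The paper shows directly that $\height\mathcal J=1$: since $I\ov{J^s}$ is a reduction of $\ov{IJ^s}$, the ideal $\mathcal J$ has the same radical as $I\ov{\mathcal R^\prime}(J)$, hence as $\m\ov{\mathcal R^\prime}(J)$, and every minimal prime of $(u_2)$ (each of height one) contains $\m$ because $\ov{\mathcal R^\prime}(J)/(u_2)$ is graded with Artinian local degree-zero part $R/\ov{J}$. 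You instead squeeze $\dim D$ between a nonzerodivisor bound from above and a Hilbert-function bound from below. Both routes silently invoke the same catenarity/equidimensionality facts for these Rees-type rings to convert between height and coheight (you at least flag this), and your upper bound $\dim D\le 2$ — which is the inequality actually used afterwards, namely $\height(u_1,\ov{\mathcal R^\prime}_1^+)\ge 2$ so that this ideal escapes the height-one associated primes of $(u_1)$ — is sound.

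The soft spot is the lower bound. You assert that the degree-one growth of $\lm(D_s)$ for $s\ge 0$ ``forces'' $\dim D\ge 2$, but $D$ is $\ZZ$-graded and its nonnegative part $D^{\ge 0}$ is a \emph{subring}, not a quotient, of $D$; so $\dim D^{\ge 0}=2$ does not formally bound $\dim D$ from below, and you have misplaced the delicacy — you worry that the negative degrees might inflate the dimension (already excluded by your own upper bound) instead of justifying why the positive-degree growth bounds it from below. The claim is true and repairable, e.g.\ via a prime filtration of $D$ over $\ov{\mathcal R^\prime}(J)$: if every minimal prime of $D$ had coheight at most one, each factor $\ov{\mathcal R^\prime}(J)/Q$ would be a $\ZZ$-graded domain of dimension at most one with degree-zero part a field, hence with graded components of bounded length, contradicting $\lm(D_s)\to\infty$; but some such argument must be supplied, and the paper's radical computation avoids the issue entirely. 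A second, smaller imprecision: $\lm(\ov{J^s}/\ov{IJ^s})=\ov{P}_{I,J}(1,s)-\ov{P}_{I,J}(0,s)$ for $s\gg 0$ does not follow from the mere existence of $\ov{P}_{I,J}$, which computes $\ov{H}_{I,J}$ only when both arguments are large; the eventual linear growth with leading coefficient $e(I|J)$ is Rees's one-variable lemma, i.e.\ Theorem \ref{thm1}(4) with $r=1$, and should be quoted as such.
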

\begin{proof}
We have 
\begin{eqnarray*}
 \ov{\mathcal R^\prime}(I,J)/(u_1,\ov{\mathcal R^\prime}_1^+) \cong \bigoplus_{n \in \mathbb Z}\ov{J^n}/\ov{IJ^n}.
\end{eqnarray*}
Let $\ov{\mathcal R^\prime}(J)=\bigoplus_{n \in \mathbb Z}\ov{J^n}t_2^n$ be the extended 
Rees ring of $\{\ov{J^n}\}$. Since any minimal prime ideal containing $u_2$ in $\ov{\mathcal R^\prime}(J)$ 
contains $\m$ $$\height \m\ov{\mathcal R^\prime}(J)=\height I\ov{\mathcal R^\prime }(J)=1.$$ 
Since $I\ov{J^n}$ is a reduction of $\ov{IJ^n}$, $I\ov{\mathcal R^\prime}(J)$ and 
$(\oplus_{n \in \mathbb Z}\ov{IJ^n}t_2^n)$ have 
same radical. Therefore $\height (\oplus_{n \in \mathbb Z}\ov{IJ^n}t_2^n)=1 $. Hence 
$$\dim (\bigoplus_{n \in \mathbb Z}\ov{J^n}/\ov{IJ^n})=2.$$ Thus 
$\height (u_1,\ov{\mathcal R^\prime}_1^+) =2$. \\
Similar argument shows that $\height (u_2,\ov{\mathcal R^\prime}_2^+) =2$.
\end{proof}

\begin{lemma} \em{\cite[Lemma 3.24]{marleythesis}}\label{o}
Let $R$ be a Noetherian ring and $x \in R$ be a nonzerodivisor such that $(x)$
is integrally closed ideal. Then $\height P=1$ for all 
associated primes of $(x).$ 
\end{lemma}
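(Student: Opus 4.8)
The plan is to check the height of an associated prime after localizing at it, and then argue by contradiction. First I would note that if $P\in\ass(R/(x))$ then $P$ contains the nonzerodivisor $x$, so $P$ is not a minimal prime of $R$ and hence $\height P\geq 1$ automatically; it therefore suffices to rule out $\height P\geq 2$. Replacing $R$ by $R_P$ — which is legitimate since associated primes, integral closure of ideals, and the nonzerodivisor property all behave well under localization — I may assume that $(R,\m)$ is local with $\dim R\geq 2$, that $x\in\m$ is a nonzerodivisor, that $(x)$ is integrally closed, and that $\m\in\ass(R/(x))$.

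The last condition yields $y\in R$ with $y\notin(x)$ and $\m y\subseteq(x)$. Since $x$ is a nonzerodivisor it is a unit in the total ring of fractions $Q(R)$, so I may set $z=y/x\in Q(R)$; then $z\notin R$ while $z\m\subseteq R$. As $z\m$ is an ideal of the local ring $R$, either $z\m=R$ or $z\m\subseteq\m$. In the first case $zm=1$ for some $m\in\m$, forcing $\m=(m)$ to be principal and hence $\dim R\leq 1$ by Krull's principal ideal theorem, contradicting $\dim R\geq 2$. In the second case the determinant trick applied to the finitely generated module $\m$ produces a monic polynomial $p(T)=T^n+c_1T^{n-1}+\cdots+c_n\in R[T]$ with $p(z)\m=0$; clearing denominators then exhibits $y$ as integral over $(x)$, so $y\in\ov{(x)}=(x)$, again a contradiction. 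Either way $\height P\leq 1$, and combined with $\height P\geq 1$ this proves the lemma.

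The step I expect to require the most care is deducing $p(z)=0$ from $p(z)\m=0$ in the second case: the module $\m$ need not be faithful, so the determinant trick only gives that $p(z)$ annihilates $\m$. Here I would use that $x\in\m$ is a nonzerodivisor, hence a unit in $Q(R)$, so the relation $p(z)x=0$ already forces $p(z)=0$; substituting $z=y/x$ and multiplying through by $x^n$ turns $p(z)=0$ into $y^n+(c_1x)y^{n-1}+\cdots+c_nx^n=0$ with $c_ix^i\in(x)^i$, which is exactly an equation of integral dependence of $y$ on $(x)$. The dichotomy ``$z\m=R$ or $z\m\subseteq\m$'', valid precisely because $R$ is local, is what lets the hypothesis $\dim R\geq 2$ interact with the integral closedness of $(x)$.
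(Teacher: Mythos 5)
Your proof is correct. Note that the paper itself gives no argument for this lemma; it is quoted verbatim from Marley's thesis with only a citation, so there is no in-paper proof to compare against. Your argument is the standard one: reduce to the local case at $P$, use $P\in\ass(R/(x))$ to produce $z=y/x$ in the total quotient ring with $z\notin R_P$ and $zPR_P\subseteq R_P$, and then run the dichotomy $zPR_P=R_P$ (forcing $PR_P$ principal, hence $\height P\le 1$ by Krull) versus $zPR_P\subseteq PR_P$ (forcing $y$ integral over $(x)$ by the determinant trick, contradicting $(x)=\ov{(x)}$). All the delicate points are handled: associated primes, integral closure, and the nonzerodivisor property do commute with localization, and your use of the unit $x\in Q(R_P)$ to upgrade $p(z)\m=0$ to $p(z)=0$ correctly patches the only real gap in a naive application of Cayley--Hamilton.
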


\begin{lemma} \label{lemma1}
 Let $A \subseteq B$ be a ring extension and $C$ be the integral closure of $A$ in
$B.$ Let $z \in C$ be a nonzerodivisor in $C$ such that $z$ is a unit 
in $B.$ Then $zC$ is an integrally closed ideal.
\end{lemma}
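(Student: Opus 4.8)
The plan is to prove the equivalent statement that the integral closure of the ideal $zC$ inside $C$ equals $zC$ itself; since $zC \subseteq \ov{zC}$ always holds, it suffices to show that every $w \in C$ which is integral over the ideal $zC$ already lies in $zC$. So I would start with such a $w$, fix an equation of integral dependence
$$w^n + c_1 w^{n-1} + \cdots + c_{n-1}w + c_n = 0, \qquad c_i \in (zC)^i,$$
and exploit that $z$ is invertible in $B$ to divide through by $z^n$.

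The key bookkeeping step is that, because $C$ is a ring with identity, $(zC)^i = z^i C^i = z^i C$, so each coefficient can be written as $c_i = z^i d_i$ with $d_i \in C$. Substituting and multiplying the relation by $z^{-n}$ (legitimate in $B$ since $z$ is a unit there) collapses the powers of $z$ exactly: setting $v = wz^{-1} \in B$, the $i$-th summand $z^i d_i w^{n-i}$ becomes $d_i (wz^{-1})^{n-i} = d_i v^{n-i}$, so the relation turns into the monic equation $v^n + d_1 v^{n-1} + \cdots + d_{n-1} v + d_n = 0$ with all $d_i \in C$. Thus $v \in B$ is integral over $C$.

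The final, and really the only conceptual, ingredient is that $C$, being the integral closure of $A$ in $B$, is itself integrally closed in $B$ (integral closure is transitively closed). Hence $v \in C$, and therefore $w = zv \in zC$, which is exactly what we want. I expect no serious obstacle here; the one point to state carefully is the idempotence of integral closure that yields $v \in C$. I would also remark that the nonzerodivisor hypothesis on $z$ is not actually needed for this direction of the argument but is recorded because the lemma is meant to be applied together with Lemma \ref{o}, where a nonzerodivisor generating an integrally closed principal ideal is forced to have all its associated primes of height one.
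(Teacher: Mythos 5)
Your proposal is correct and follows essentially the same route as the paper's proof: write each coefficient of the integral dependence equation as $z^i d_i$ with $d_i\in C$, divide by $z^n$ using that $z$ is a unit in $B$, conclude $wz^{-1}$ is integral over $C$ and hence lies in $C$ since $C$ is integrally closed in $B$, so $w\in zC$. The only difference is that you spell out the bookkeeping $(zC)^i=z^iC$ and the appeal to idempotence of integral closure explicitly, which the paper leaves implicit.
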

\begin{proof}
 Let $x \in C$ be integral over $zC.$ Then 
$$x^n+zc_1x^{n-1}+\cdots+z^{n-1}c_{n-1}x+z^nc_n=0$$ 
for some $c_i \in C.$ Thus 
$$\left(\frac{x}{z}\right)^n+c_1\left(\frac{x}{z}\right)^{n-1}+\cdots+c_{n-1}
\left(\frac{x}{z}\right)+c_n=0.$$
Hence $xz^{-1}\in B$ is integral over $C$. Therefore $x \in zC.$
\end{proof}

\begin{theorem}
 Let $(R,\m)$ be an analytically unramified Cohen-Macaulay local ring of dimension $2$ with 
infinite residue field. Then $\grade(\ov{\mathcal{R}^\prime}(I,J|I)_{1}^+) $ and 
$\grade(\ov{\mathcal{R}^\prime}(I,J|J)_{2}^+)$ are positive.
\end{theorem}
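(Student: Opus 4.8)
The plan is to produce a nonzerodivisor inside each of the two ideals by bounding the heights of the associated primes they would otherwise have to lie in; I carry this out for $\ov{\mathcal R^\prime}(I,J|I)_1^+$, the argument for $\ov{\mathcal R^\prime}(I,J|J)_2^+$ being identical with $u_2$ and $J$ replacing $u_1$ and $I$. First I would record the identification $\ov{\mathcal R^\prime}(I,J|I)\cong \ov{\mathcal R^\prime}/u_1\ov{\mathcal R^\prime}$, which follows by comparing bidegrees: the $(r,s)$ component of $u_1\ov{\mathcal R^\prime}$ is $\ov{I^{r+1}J^s}t_1^rt_2^s$, so the $(r,s)$ component of the quotient is $\ov{I^rJ^s}/\ov{I^{r+1}J^s}$. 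Since $\ov{\mathcal R^\prime}$ is Noetherian, $\grade(\ov{\mathcal R^\prime}(I,J|I)_1^+)\geq 1$ is equivalent, by prime avoidance, to the assertion that $\ov{\mathcal R^\prime}(I,J|I)_1^+$ is contained in no associated prime of $\ov{\mathcal R^\prime}/(u_1)$, i.e. in no member of $\ass(\ov{\mathcal R^\prime}/(u_1))$.

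The key step is to show that every associated prime of $(u_1)$ has height one. To this end I first note that $u_1=t_1^{-1}$ is a nonzerodivisor on $\ov{\mathcal R^\prime}$, since multiplication by it is already injective in the Laurent ring $B=R[t_1^{\pm1},t_2^{\pm1}]$. Because $R$ is analytically unramified, $\ov{\mathcal R^\prime}=\ov{\mathcal R^\prime}(I,J)$ is precisely the integral closure of the ordinary extended Rees algebra $A=\mathcal R^\prime(I,J)$ in $B$, and is module finite over $A$, hence Noetherian. Applying Lemma \ref{lemma1} with this $A\subseteq B$, with $C=\ov{\mathcal R^\prime}$, and with $z=u_1$ (a unit of $B$ that is a nonzerodivisor of $C$) shows that $(u_1)=u_1C$ is an integrally closed ideal of $\ov{\mathcal R^\prime}$. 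Lemma \ref{o} then applies to the nonzerodivisor $u_1$ and yields $\height P=1$ for every $P\in\ass(\ov{\mathcal R^\prime}/(u_1))$.

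Finally I would combine this with Lemma \ref{p}. Suppose some associated prime $P$ of $(u_1)$ contained the image of $\ov{\mathcal R^\prime}_1^+$; then $P\supseteq(u_1,\ov{\mathcal R^\prime}_1^+)$, so $2=\height(u_1,\ov{\mathcal R^\prime}_1^+)\leq \height P=1$, a contradiction. Hence $\ov{\mathcal R^\prime}(I,J|I)_1^+$ avoids every associated prime of $\ov{\mathcal R^\prime}/(u_1)$ and therefore contains a nonzerodivisor, giving $\grade(\ov{\mathcal R^\prime}(I,J|I)_1^+)\geq 1$. The symmetric computation, using instead the height equality $\height(u_2,\ov{\mathcal R^\prime}_2^+)=2$ from Lemma \ref{p}, gives $\grade(\ov{\mathcal R^\prime}(I,J|J)_2^+)\geq 1$.

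I expect the main obstacle to be the justification, invoked in the second step, that $\ov{\mathcal R^\prime}(I,J)$ is exactly the integral closure of $\mathcal R^\prime(I,J)$ in the Laurent ring and is module finite over it: this is the bigraded, two-ideal analogue of the classical description of the integral closure of a Rees algebra, and it is precisely where the analytically unramified hypothesis (through Rees's finiteness theorem) enters. Without module finiteness one could invoke neither the Noetherian hypothesis needed for Lemma \ref{o} nor the identification of $(u_1)$ as an integrally closed ideal of a Noetherian ring, so this verification is the technical heart of the argument; the remaining steps are formal manipulations with associated primes, prime avoidance, and the height bounds already secured in Lemma \ref{p}.
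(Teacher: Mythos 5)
Your proof is correct and follows essentially the same route as the paper: identify $\ov{\mathcal R^\prime}(I,J|I)$ with $\ov{\mathcal R^\prime}/(u_1)$, use Lemma \ref{lemma1} to see $(u_1)$ is integrally closed, Lemma \ref{o} to bound the heights of its associated primes by $1$, and Lemma \ref{p} to force $(u_1,\ov{\mathcal R^\prime}_1^+)$ outside all of them. The only difference is that you spell out the module-finiteness of $\ov{\mathcal R^\prime}$ over $\mathcal R^\prime$ (via the analytically unramified hypothesis), which the paper leaves implicit.
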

\begin{proof}
By Lemma \ref{lemma1}, $u_1\ov{\mathcal{R}^\prime}$ is an 
integrally closed
ideal. Therefore  by Lemma \ref{o}, all the associated primes of 
$u_1\ov{\mathcal{R}^\prime}$ have height $1$. Since $\height (u_1,\ov{\mathcal
R^\prime}_{1}^+)=2 >1,$ $(u_1,\ov{\mathcal R^\prime}_{1}^+) $ is not contained in any of the 
associated primes of $u_1\ov{\mathcal{R}^\prime}$. Since 
$$\ov{\mathcal R^\prime} /(u_1) \cong \ov{\mathcal R^\prime}(I,J|I), $$
$\grade(\ov{\mathcal{R}^\prime}(I,J|I)_{1}^+) \geq 1$. \\
Similar argument shows that $ \grade(\ov{\mathcal{R}^\prime}(I,J|J)_{2}^+)\geq 1$.\\
\end{proof}

\begin{lemma} \label{l}
Let $(R,\m)$ be an analytically unramified Cohen-Macaulay local ring of dimension $2$ 
with infinite residue field. Let $I,J$ be $\m-$primary ideals 
and let $a \in I\setminus I^2, b \in J\setminus
J^2$ be nonzerodivisors such that 
\begin{eqnarray*} 
(a) \cap \ov{I^rJ^s}&=&a\ov{I^{r-1}J^s} \mbox{ for }r \gg 0 \mbox{ and all integers } s \mbox{ and } \\
(b) \cap \ov{I^rJ^s}&=&b\ov{I^{r}J^{s-1}} \mbox{ for all integers }r \mbox{ and }s \gg0 .
\end{eqnarray*}
Then 
\begin{eqnarray*}
(a) \cap \ov{I^rJ^s}&=&a\ov{I^{r-1}J^s} \mbox{ for all }r>0 \mbox{ and all
integers }s \mbox{ and } \\
(b) \cap \ov{I^rJ^s}&=&b\ov{I^{r}J^{s-1}} \mbox{ for all integers }r \mbox{ and } s>0.
\end{eqnarray*}
\end{lemma}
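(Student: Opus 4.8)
The two asserted equalities are interchanged by the substitution $(I,J,a,b,t_1,t_2,r,s)\mapsto(J,I,b,a,t_2,t_1,s,r)$, under which the two hypotheses are likewise interchanged; since the theorem just proved supplies $\grade(\ov{\mathcal{R}^\prime}(I,J|I)_1^+)\geq 1$ and $\grade(\ov{\mathcal{R}^\prime}(I,J|J)_2^+)\geq 1$ at once, it suffices to deduce the first equality, for all $r>0$ and all $s$, from the first hypothesis and the grade bound for $\ov{\mathcal{R}^\prime}(I,J|I)_1^+$. Throughout, the inclusion $a\ov{I^{r-1}J^s}\subseteq (a)\cap\ov{I^rJ^s}$ is automatic, as $a\ov{I^{r-1}J^s}\subseteq\ov{I}\;\ov{I^{r-1}J^s}\subseteq\ov{I^rJ^s}$, so the content is the reverse inclusion, equivalently (using that $a$ is a nonzerodivisor) the implication $ax\in\ov{I^rJ^s}\Rightarrow x\in\ov{I^{r-1}J^s}$.

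My plan is to read this off the associated graded ring $G:=\ov{\mathcal{R}^\prime}(I,J|I)\cong\ov{\mathcal{R}^\prime}/(u_1)$, through the image $a^{*}$ of $at_1$ in $G_{(1,0)}=\ov{I}/\ov{I^2}$. First I would show $a^{*}$ is a nonzerodivisor on $G$. Fix $r_1$ with the first hypothesis holding for $r\geq r_1$, and let $N=(0:_{G}a^{*})$, an ideal of $G$ whose $(r,s)$ component is $\ker\big(a^{*}\colon G_{(r,s)}\to G_{(r+1,s)}\big)$. If $[x]\in N_{(r,s)}$ then $x\in\ov{I^rJ^s}$ and $ax\in\ov{I^{r+2}J^s}$; the hypothesis at level $r+2$, valid once $r\geq r_1-2$, gives $x\in\ov{I^{r+1}J^s}$, that is $[x]=0$. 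Hence $N_{(r,s)}=0$ for $r\geq r_1-2$; together with $G_{(r,s)}=0$ for $r<0$, this concentrates $N$ in the finitely many first-degrees $0\le r\le r_1-3$. Since every element of $(G_1^+)^{k}$ has first-degree $\geq k$, the power $k=r_1-2$ yields $(G_1^+)^{k}N\subseteq N\cap\{r\geq r_1-2\}=0$, so $N\subseteq H^{0}_{G_1^+}(G)$. As $\grade(G_1^+)\geq1$ forces $H^{0}_{G_1^+}(G)=0$, we conclude $N=0$.

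The regularity of $a^{*}$ is precisely the statement $(\dagger)$: for all $r\geq0$ and all $s$, $x\in\ov{I^rJ^s}$ and $ax\in\ov{I^{r+2}J^s}$ imply $x\in\ov{I^{r+1}J^s}$. I would then prove $(a)\cap\ov{I^rJ^s}=a\ov{I^{r-1}J^s}$ for all $r\geq1$ by induction on $r$. In the step, assuming the equality at level $r$ and given $ax\in\ov{I^{r+1}J^s}\subseteq\ov{I^rJ^s}$, the inductive hypothesis gives $x\in\ov{I^{r-1}J^s}$ and then $(\dagger)$ (at level $r-1$) upgrades this to $x\in\ov{I^rJ^s}$, which is the equality at level $r+1$. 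What is left is the base case $r=1$, namely $ax\in\ov{IJ^s}\Rightarrow x\in\ov{J^s}$.

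This base case is the main obstacle, and it lies genuinely outside the reach of $G$: neither $(\dagger)$ nor any regularity on the associated graded ring can constrain an $x$ that does not already lie in $\ov{J^s}$. I would settle it valuatively. Writing $v_1,\dots,v_n$ for the Rees valuations of the filtration $\{\ov{I^rJ^s}\}$, so that $\ov{I^rJ^s}=\{z: v_i(z)\geq r\,v_i(I)+s\,v_i(J)\ \text{for all }i\}$, the condition $ax\in\ov{IJ^s}$ reads $v_i(x)\geq v_i(I)-v_i(a)+s\,v_i(J)$, and the desired $x\in\ov{J^s}$ follows as soon as $v_i(a)=v_i(I)$ for every $i$ (the inequality $v_i(a)\geq v_i(I)$ being clear from $a\in I$). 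This equality is exactly what the first hypothesis enforces: were $v_{i_0}(a)>v_{i_0}(I)$ for some $i_0$, the independence of the Rees valuations would force the two sides of $(\ov{I^mJ^s}:a)=\ov{I^{m-1}J^s}$ apart for suitable large $m$, contradicting the hypothesis. With $v_i(a)=v_i(I)$ in hand the base case is immediate, completing the induction; the companion statement for $b$ follows verbatim using $\grade(\ov{\mathcal{R}^\prime}(I,J|J)_2^+)\geq1$.
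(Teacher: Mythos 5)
Your central mechanism is the paper's: both arguments show that the image $a^*$ of $at_1$ in $[\ov{\mathcal{R}^\prime}(I,J|I)]_{(1,0)}$ is a nonzerodivisor on $G=\ov{\mathcal{R}^\prime}(I,J|I)$ by checking that $(0:_G a^*)$ is killed by a power of $G_1^+$ (your ``concentrated in finitely many first-degrees'' and the paper's ``$y^*z^*=0$ for $u\gg0$'' are the same computation) and then invoking $H^0_{G_1^+}(G)=0$, which follows from $\grade(G_1^+)\geq 1$. You diverge in the descent from $a^*$-regularity to the ideal equalities. The paper does it in one stroke: given $az\in\ov{I^rJ^s}$ with $z\neq0$, it takes the largest $k$ with $z\in\ov{I^kJ^s}$ (using $\bigcap_k\ov{I^kJ^s}=0$) and observes that $k<r-1$ would produce a nonzero $z^*\in G_{(k,s)}$ with $a^*z^*=0$. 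You instead induct on $r$, and you correctly observe that the base case $r=1$, i.e.\ $az\in\ov{IJ^s}\Rightarrow z\in\ov{J^s}$, is not a formal consequence of $a^*$ being $G$-regular: an element $z\notin\ov{J^s}$ defines no class in $G$ at all. This is a sharp observation --- the paper's step ``there exists $k$ with $z\in\ov{I^kJ^s}\setminus\ov{I^{k+1}J^s}$'' tacitly presumes $z$ lies in $\ov{I^kJ^s}=\ov{J^s}$ for $k\leq0$ when $s>0$, which is exactly your base case --- so you have isolated the one point where more than the associated graded ring is needed.

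The gap is in how you discharge that base case. You reduce it to the claim that the hypothesis forces $v_i(a)=v_i(I)$ for every Rees valuation $v_i$, but your justification --- ``the independence of the Rees valuations would force the two sides of $(\ov{I^mJ^s}:a)=\ov{I^{m-1}J^s}$ apart for suitable large $m$'' --- is an assertion, not an argument. To derive a contradiction from $v_{i_0}(a)>v_{i_0}(I)$ you must exhibit, for some large $m$, an element $z$ with $az\in\ov{I^mJ^s}$ and $z\notin\ov{I^{m-1}J^s}$, i.e.\ an element whose $v_{i_0}$-value lands in a window of width $v_{i_0}(a)-v_{i_0}(I)$ (possibly $1$) just below $(m-1)v_{i_0}(I)+sv_{i_0}(J)$ while every other $v_i$-value stays above its threshold. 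That requires the non-redundancy of each Rees valuation propagated to all large powers, the normalization $v_{i_0}(R\setminus\{0\})=\mathbb{N}$ to tune the value into the window, and the uniform valuative description $\ov{I^rJ^s}=\{z\mid v_i(z)\geq rv_i(I)+sv_i(J)\mbox{ for all }i\}$, which you also import without reference. All of this is true and assemblable from Rees's theory, but as written the decisive step of your proof is unproved. A more economical route to the same conclusion: from $az\in\ov{IJ^s}$ and the hypothesis one gets $z\ov{I^{m-1}}\subseteq\ov{I^{m-1}J^s}$ for $m\gg0$, and testing against a general $w\in I^{m-1}$ with $v_i(w)=(m-1)v_i(I)$ for all $i$ gives $v_i(z)\geq sv_i(J)$ directly; this is shorter but still valuative, so either way you must supply this extra input explicitly.
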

\begin{proof}
Let $a^*$ denote the image of $at_1$ in $[\mathcal{R}^\prime(I,J| I)]_{(1,0)}$. 
We show that $(0:_{\ov{\mathcal R^\prime}(I,J|I)}a^*) 
=0$. Fix $p,q \in \mathbb{Z}$. Let $z^* \in
(0:_{\ov{\mathcal{R}^\prime}(I,J|I)}a^*)\cap
[\ov{\mathcal{R}^\prime}(I,J|I)]_{(p,q)}$ for some $z \in \ov{I^pJ^q}$. Then for any $y^* \in 
[\ov{\mathcal{R}^\prime}(I,J|I)]_{(u,v)}$, where $y \in \ov{I^uJ^v}$, 
$y^*z^*a^*=0$. Hence $yza \in (a) \cap \ov{I^{u+p+2}J^{v+q+1}}=a \ov{I^{u+p+1}J^{v+q+1}}$ for 
$u \gg 0$. Therefore $y^*z^*=0$ for $u \gg 0$. Thus
$(\ov{\mathcal{R}^\prime}(I,J|I)_{1}^+)^nz^*=0$ for some $n.$ Hence 
$z^* \in H^0_{\ov{\mathcal{R}^\prime}(I,J|I)_{1}^+}(\ov{\mathcal{R}^\prime}(I,J|I))=0.$
Thus $(0:_{\ov{\mathcal{R}^\prime}(I,J|I)}a^*)=0.$ \\
Let $r >0$ and $y=az \in (a) \cap \ov{I^rJ^s}$ for some $0 \neq z \in R$. Since 
$\bigcap_k \ov{I^{k}J^{s}}=0$, there exists $k$ such that $z \in \ov{I^{k}J^{s}}
\setminus \ov{I^{k+1}J^{s}}$. Suppose $k <r-1$. 
Then $0\neq z^*\in[{\ov{\mathcal{R}^\prime}(I,J|I)}]_{(k,s)}$ and $a^*z^*=0$, 
a contradiction. Therefore $z \in \ov{I^{r-1}J^s}.$ Hence $y\in a\ov{I^{r-1}J^s}$.
Similar argument gives 
$$(b) \cap \ov{I^rJ^s}=b\ov{I^{r}J^{s-1}} \mbox{ for any integer }r \mbox{ and } 
s>0.$$
\end{proof}

\begin{theorem} \em{\cite{rees}} \label{reqAB}Let $(R,\m)$ be an analytically
unramified Cohen-Macaulay local ring of dimension $2$ 
with infinite residue field and let $I,J$ be $\m$-primary ideals. Then there
exists a good joint reduction $(a,b)$ of $\{\ov{I^rJ^s}\}$.
\end{theorem}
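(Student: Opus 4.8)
The plan is to produce a \emph{single} pair $(a,b)$, with $a\in I$ and $b\in J$ chosen sufficiently general, that simultaneously (i) is a joint reduction of the filtration $\{\ov{I^rJ^s}\}$, (ii) consists of nonzerodivisors, (iii) satisfies $a\in I\setminus I^2$ and $b\in J\setminus J^2$, and (iv) satisfies the intersection identities of Rees's Lemma for $r\gg 0$ and $s\gg 0$. Once such a pair is in hand, Lemma \ref{l} immediately upgrades the identities in (iv) so that they hold for \emph{all} $r>0$ and all $s>0$, which together with (i) is exactly the assertion that $(a,b)$ is a good joint reduction. So the real work is to arrange that one general pair meets all of (i)--(iv) at once; this is feasible because the residue field is infinite and each of these is a nonempty Zariski-open condition on the images of $a,b$ in $I/\m I$ and $J/\m J$.

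First I would dispose of the easy conditions. Since $R$ is analytically unramified it is reduced, and being Cohen--Macaulay of dimension $2$ every associated prime of $R$ is a minimal prime of height $0$; as $I,J$ are $\m$-primary they lie in none of these primes, so a general $a\in I$ (resp.\ $b\in J$) avoids $\ass(R)$ and is a nonzerodivisor, giving (ii). Because $I\subseteq\m$ we have $I^2\subseteq\m I$, so every minimal generator of $I$ lies in $I\setminus I^2$; a general element of $I$ is a minimal generator, which yields (iii). For (iv) I would apply Lemma \ref{r1.2} with $S$ the set of minimal primes of $R$ (none contains $IJ$, since $IJ$ is $\m$-primary): its conclusion is precisely the identities \eqref{eq4} and \eqref{eq5} for $r\ge r_0$, $s\ge s_0$, and the elements it produces are general, hence compatible with (ii) and (iii).

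The substantive point is (i): that a general pair actually gives $\ov{I^rJ^s}=a\ov{I^{r-1}J^s}+b\ov{I^rJ^{s-1}}$ for $r,s\gg 0$. Here I would use that, $R$ being analytically unramified, the normal Rees algebra $\ov{\mathcal R}(I,J)=\bigoplus_{r,s\ge 0}\ov{I^rJ^s}t_1^rt_2^s$ is a finitely generated $R$-algebra, and that its bigraded fiber cone $\ov{\mathcal R}(I,J)\otimes_R k$ has dimension $2$ (the analytic spread of the bigraded normal filtration equals $\dim R=2$). For a general pair, the images of $at_1$ and $bt_2$ then form a homogeneous system of parameters of this fiber cone, cutting its dimension from $2$ to $0$, and a graded Nakayama argument converts this into the joint reduction identity in all large bidegrees, which is (i). I expect this analytic-spread computation, together with the verification that the system-of-parameters locus is a nonempty open condition that can be met simultaneously with (ii)--(iv), to be the main obstacle; once it is settled, the assembly via Lemma \ref{l} is routine. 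Alternatively, one may invoke Rees's existence of joint reductions of $\{\ov{I^rJ^s}\}$ from \cite{rees3} and \cite{rees} for (i) and devote the whole argument to promoting the pair to a \emph{good} one through Lemma \ref{l}.
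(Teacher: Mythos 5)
Your reduction of the ``good'' part of the statement to Lemma \ref{r1.2} plus Lemma \ref{l} is exactly what the paper does, and conditions (ii) and (iii) are handled correctly. The genuine gap is in your treatment of (i), the joint reduction property itself. The bigraded fiber cone $\ov{\mathcal{R}}(I,J)\otimes_R k=\bigoplus_{r,s\ge 0}\ov{I^rJ^s}/\m\ov{I^rJ^s}$ of two $\m$-primary ideals in a $2$-dimensional local ring does \emph{not} have dimension $2$: its dimension is $d+1=3$ (already for $I=J=\m$ the fiber cone is $\bigoplus_{r,s}\m^{r+s}/\m^{r+s+1}$, which has dimension $\dim F(\m)+1=3$). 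Consequently the images of $at_1$ and $bt_2$, being only two elements, can never form a homogeneous system of parameters of this ring, and the Nakayama argument you propose collapses. This is precisely the reason the existence of joint reductions is a nontrivial theorem of Rees rather than a routine analytic-spread computation: a joint reduction $(a,b)$ consists of fewer elements than the dimension of the relevant fiber cone. Your fallback --- citing \cite{rees3} for the existence of joint reductions of $\{\ov{I^rJ^s}\}$ --- does not close the gap either: that reference treats the filtration $\{I^rJ^s\}$, the normal-filtration version is part of what is being proved here, and even granting it you would still need to verify that the set of joint reductions of the normal filtration is a nonempty open locus that can be intersected with the loci for (ii)--(iv); none of this is checked.

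The paper's proof fills this hole by a quite different mechanism, and notably \emph{not} by a single simultaneous general-position choice: the choice of $b$ depends on the prior choice of $a$. One first picks $a$ and an auxiliary $b_1$ satisfying the intersection identities for all $r>0$, resp.\ $s>0$ (via Lemmas \ref{r1.2} and \ref{l}), sets $y=ab_1$, and checks $(y)\cap\ov{I^rJ^s}=y\,\ov{I^{r-1}J^{s-1}}$. Then $b$ is chosen general enough to satisfy its own intersection identity in $R$, to be a nonzerodivisor on $R'=R/(y)$, and to satisfy the analogous identity for the image filtration in $R'$. From these one deduces $(y,b)\cap\ov{I^rJ^s}\subseteq a\ov{I^{r-1}J^s}+b\ov{I^rJ^{s-1}}$ for $r,s\gg 0$, and since $R/(y,b)$ is Artinian and $\ov{I^rJ^s}\subseteq I^{r-h}J^{s-h}$ (analytic unramifiedness), one gets $\ov{I^rJ^s}\subseteq(y,b)$ for $r,s\gg 0$, whence the joint reduction identity. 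If you want to salvage your outline, you must replace the fiber-cone step with an argument of this kind that \emph{derives} the joint reduction property from the intersection identities and a passage to an Artinian quotient.
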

\begin{proof}
\noindent Let $S_1$ be a set of associated primes of $R.$ Since $\m \notin S_1$,
$IJ \nsubseteq \mathfrak{p}$ for any $\mathfrak{p} \in 
S_1.$ Therefore by Lemma \ref{r1.2} and \ref{l} there exist $a \in I\setminus
I^2,b_1 \in J\setminus J^2$ not contained in any of the prime ideals of $S_1$
such that 
\begin{eqnarray*}
(a) \cap \ov{I^rJ^s}&=&a\ov{I^{r-1}J^s} \mbox{ for all }r>0 \mbox{ and }s \in \mathbb{Z} \\
(b_1) \cap \ov{I^rJ^s}&=&b_1\ov{I^{r}J^{s-1}} \mbox{ for all }r \in \mathbb{Z} \mbox{ and } s>0.
\end{eqnarray*}
Let $y=ab_1$ and $R^\prime=R/(y)$. Then for all $r,s>0$,
\begin{eqnarray*}
 (y) \cap \ov{I^rJ^s}&=&(y) \cap ((a) \cap \ov{I^rJ^s})\\
&=&(y) \cap a\ov{I^{r-1}J^s}\\
&=&a((b_1) \cap \ov{I^{r-1}J^{s}})\\
&=&y\ov{I^{r-1}J^{s-1}}.
\end{eqnarray*}
Let $S_2=\{\mathfrak{p} \in \spec R \mid  \mathfrak{p}/(y) \in \ass R^\prime \}$
and $\mathcal{I}=\{\mathcal{I}_{(r,s)}=
\frac{\ov{I^rJ^s}+(y)}{(y)}\}$ be a filtration in $R^\prime$ and
$L=\bigoplus_{r,s \in \mathbb{Z}} \mathcal{I}_{(r,s)}t_1^rt_2^s.$ Let 
\begin{eqnarray*}
S_{21} &=&\{\mathfrak{p} \in \ass_{\ov{\mathcal R^\prime}}(\ov{\mathcal{R}^\prime}/ u\ov{\mathcal
R^\prime}) \mid J t_2 \nsubseteq \mathfrak{p}\}\\
S_{22} &=&\{\mathfrak{p} \in \ass_L(L/ u L) \mid J^\prime t_2 \nsubseteq
\mathfrak{p}\}
\end{eqnarray*} 
where $'$ denotes image of an ideal in $R^\prime.$
Then $\mathfrak{p} \cap J+\m J/\m J$ is a proper subspace of $J/\m J$ for
$\mathfrak{p} \in S_2$. Similarly $q \cap Jt_2+\m J
t_2/\m Jt_2$ (resp. $q \cap J^\prime t_2+\m^\prime J^\prime t_2/\m^\prime
J^\prime t_2$) is a proper subspace of $Jt_2 /\m Jt_2\cong
 J/\m J$ (resp. $J^\prime t_2/ \m^\prime J^\prime t_2 \cong J/\m J$) for $q \in
S_{21}$ (resp. $q \in S_{22}$). Since $R/\m$ is 
infinite there exists $b \in J\setminus J^2$ such that $b \notin \mathfrak{p}$
for any $\mathfrak{p} \in S_2$, $bt_2 \notin q$ 
for $q \in S_{21} $ and $b^\prime t_2 \notin Q$ for any $Q \in S_{22}$. Then for any integer $r$ and $s>0$, 
\begin{eqnarray*} 
 (b) \cap \ov{I^rJ^s}&=& b\ov{I^{r}J^{s-1}} \mbox{ in }R . 
\end{eqnarray*} Also for $r,s \gg 0,$
\begin{eqnarray}\label{eq8}
(b^\prime) \cap \mathcal{I}_{(r,s)}&=& b^\prime \mathcal{I}_{(r,s-1)} \mbox{ in } R^\prime.
\end{eqnarray}
See \cite[Lemma 1.2]{rees3} and Lemma \ref{l}. Let $uy+vb \in \ov{I^rJ^s}$ for some $u,v \in R$. 
Let $'$ denotes image of an element in $R^\prime$. Then by equation 
\ref{eq8}, $v^\prime b^\prime\in b^\prime \mathcal{I}_{(r,s-1)}$ for $r,s 
\gg0$. Therefore $vb-wb = yz$ for some $w \in \ov{I^rJ^{s-1}} $ and $z \in R$. 
Thus 
$$uy+vb=wb+y(u+z) \in b \ov{I^rJ^{s-1}}+(y) \cap \ov{I^rJ^s}.$$ 
Therefore for $r,s\gg0,$
\begin{eqnarray*}
(y,b) \cap \ov{I^rJ^s} &=& b\ov{I^rJ^{s-1}}+(y) \cap \ov{I^rJ^s}\\
&=&b\ov{I^rJ^{s-1}}+y\ov{I^{r-1}J^{s-1}}\\
&\subseteq &b\ov{I^rJ^{s-1}}+a\ov{I^{r-1}J^s}
\end{eqnarray*}
Since $R/(y,b)$ is artinian $I^rJ^s \subseteq (y,b)$ for $r,s \gg 0$. Since $R$
is analytically unramified, there exists $h \geq 0$ such 
that $\ov{I^rJ^s} \subseteq I^{r-h}J^{s-h}$ for all integers $r,s \geq h$. Thus
$\ov{I^rJ^s} \subseteq (y,b)$ for $r,s \gg 0$. Hence for $r,s \gg 0,$
$$\ov{I^rJ^s}=a\ov{I^{r-1}J^s}+b\ov{I^rJ^{s-1}}.$$
\end{proof}

\section{Local Cohomology of Bigraded Rees Algebras}
\noindent Let $(R,\m)$ be a Noetherian local ring of dimension $2$. Let $I,J$ be
$\m$-primary ideals and $(a,b)$ be a good joint reduction 
of $\mathcal I=\{\ov{I^rJ^s}\}.$ In what follows we
derive a formula for 
local cohomology of $\ov{\mathcal{R}^\prime}$ with support in $(at_1,bt_2)$. We show 
that $\lm([H^2_{(at_1,bt_2)}(\ov{\mathcal{R}^\prime})]_{(r,s)})$ 
is independent of good joint reduction for all $r,s \geq 0$.  \\
Consider the Koszul complex on $((at_1)^k,(bt_2)^k)$:

$$
{F^k}^. : 0 \longrightarrow \ov{\mathcal{R}^\prime}
\buildrel\alpha_k\over\longrightarrow \ov{\mathcal{R}^\prime}(k,0) \oplus
\ov{\mathcal{R}^\prime}(0,k)
\buildrel\beta_k\over\longrightarrow \ov{\mathcal{R}^\prime}(k,k)
\longrightarrow
0,
$$
where the maps are defined as,
$$
\alpha_k(1) = ((at_1)^k, (bt_2)^k)~~ ~~ \mbox{ and } \beta_k(u, v) =
-(bt_2)^ku + (at_1)^kv.
$$
The twists are given so that $\alpha_k$ and $\beta_k$ are degree zero
maps.  We have the following commutative diagram of Koszul complexes,

$$
\CD
0@>   >> \ov{\mathcal{R}^\prime} @> \alpha_{k}  >> \ov{\mathcal{R}^\prime}(k,0)
\oplus \ov{\mathcal{R}^\prime}(0,k) @> \beta_{k} >>\ov{\mathcal{R}^\prime}(k,k)
@>  >> 0 \\
@.     @Vf_kVV             @Vg_kVV                @Vh_kVV   @.  \\
0@>   >> \ov{\mathcal{R}^\prime} @> \alpha_{k+1}  >>
\ov{\mathcal{R}^\prime}(k+1,0) \oplus \ov{\mathcal{R}^\prime}(0,k+1) @> 
\beta_{k+1} >>\ov{\mathcal{R}^\prime}(k+1,k+1)@> >> 0
\endCD
$$
where 
$$f_k(1)=1,~~g_k(1,0)=(at_1,0),g_k(0,1)=(0,bt_2) \mbox{ and } h_k(1)=abt_1t_2.$$
Then
\begin{equation} \label{eq1}
H^i_{(at_1,bt_2)}(\ov{\mathcal{R}^\prime}) =
\displaystyle\lim_{\stackrel{\longrightarrow}{k}}
H^i({F^k}^{.})\end{equation}
for all
$i$ by  \cite[Theorem 5.2.9]{bs}.
To obtain an expression for the second local cohomology module of $\ov{\mathcal
R^\prime}$ with support in $(at_1,bt_2)$ we recall 
a few results. From now onwards we assume that $(R,\m)$ is an analytically
unramified Cohen-Macaulay local ring of dimension $2$ with 
infinite residue field. 

For $r,s \gg 0$ we have 
\begin{eqnarray*}
 \lm(R/\ov{I^rJ^s})=\sum_{i+j \leq
2}(-1)^{2-(i+j)}\ov{e}_{(i,j)}\binom{r+i-1}{i}\binom{s+j-1}{j}, 
\end{eqnarray*} 
where $\ov{e}_{(i,j)}=\ov{e}_{(i,j)}(I,J)$.

\noindent We recall the following theorems.
\begin{theorem} \label{thm1} 
 \begin{enumerate}
\item \cite{rees} $\ov{e}_{(2,0)}=e(I),\ov{e}_{(0,2)}=e(J)$ and
$\ov{e}_{(1,1)}=e_{(1,1)}(I,J)$.
 \item \cite[Theorem 1.2]{rees} $\ov{e}_{(1,0)}=\ov{e}_1(I)$ and
$\ov{e}_{(0,1)}=\ov{e}_1(J).$
\item \cite[Lemma 1.1]{rees} Let $s$ be a fixed integer. Then there exists an
integer $r_0$, depending on $s$, such that, for $r \geq r_0$,
$$\lm(\ov{I^r}/\ov{I^rJ^s})=e(I|J)rs+h_s(I,J)$$
where $h_s(I,J)$ is independent of $r.$
\item Let $r$ be a fixed integer. Then there exists an integer $s_0$, depending
on $r$, such that, for $s \geq s_0$,
$$\lm(\ov{J^s}/\ov{I^rJ^s})=e(I|J)rs+g_r(I,J)$$
where $g_r(I,J)$ is independent of $s.$
\end{enumerate}
\end{theorem}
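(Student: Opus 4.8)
Since (1), (2) and (3) are Rees's results \cite{rees}, I would simply quote them; the one assertion carrying new content is (4), which is the mirror image of (3) under the exchange $I\leftrightarrow J$, $r\leftrightarrow s$. My plan is therefore to deduce (4) from (3) applied to the pair $(J,I)$ in place of $(I,J)$, using that the mixed coefficient is symmetric, $e_{(1,1)}(J,I)=e_{(1,1)}(I,J)=e(I|J)$. Holding $r$ fixed and letting the $J$-exponent run, statement (3) for $(J,I)$ produces an $s_0$ with
$$\lm(\ov{J^s}/\ov{J^sI^r})=e(J|I)\,sr+g_r(I,J)\qquad(s\ge s_0),$$
and $g_r(I,J)$ independent of $s$. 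Since $\ov{J^sI^r}=\ov{I^rJ^s}$ this is exactly (4).

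To make the constant explicit (and to see (3)--(4) from the bivariate polynomial directly), I would run the length-difference computation. For fixed $r$ and $s\gg0$,
$$\lm(\ov{J^s}/\ov{I^rJ^s})=\lm(R/\ov{I^rJ^s})-\lm(R/\ov{J^s})=\ov{P}_{I,J}(r,s)-\ov{P}_J(s).$$
Expanding $\ov{P}_{I,J}$ in the binomial basis and inserting $\ov{e}_{(0,2)}=e(J)$, $\ov{e}_{(0,1)}=\ov{e}_1(J)$ from (1)--(2), every term pure in $s$ cancels against $\ov{P}_J(s)=e(J)\binom{s+1}{2}-\ov{e}_1(J)s+\ov{e}_2(J)$; the surviving cross term is $\ov{e}_{(1,1)}rs=e(I|J)rs$, and the remainder
$$g_r(I,J)=e(I)\binom{r+1}{2}-\ov{e}_1(I)r+\ov{e}_{(0,0)}-\ov{e}_2(J)$$
is independent of $s$, as required. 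The same computation with $I$ and $J$ interchanged reproduces (3), with $h_s(I,J)=e(J)\binom{s+1}{2}-\ov{e}_1(J)s+\ov{e}_{(0,0)}-\ov{e}_2(I)$.

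The subtle step in this second route is the identity $\lm(R/\ov{I^rJ^s})=\ov{P}_{I,J}(r,s)$ for a \emph{fixed} $r$ and $s\gg0$: Rees's theorem only guarantees agreement of the Hilbert function with $\ov{P}_{I,J}$ in the joint range $r,s\gg0$, not along a single column with $r$ small. This is the main obstacle, and it is precisely the analytic input behind Rees's Lemma. I would dispose of it by noting that for fixed $r$ the tower $\{\ov{I^rJ^s}\}_s$ is a good $J$-filtration, $J\,\ov{I^rJ^s}\subseteq\ov{I^rJ^{s+1}}$, and that analytic unramifiedness makes $\bigoplus_{s\ge0}\ov{I^rJ^s}t_2^s$ a finite module over the Rees algebra $\bigoplus_{s\ge0}J^st_2^s$; hence $s\mapsto\lm(R/\ov{I^rJ^s})$ is eventually polynomial of degree $2$ in $s$. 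The first route avoids this issue entirely, since (3) already encodes the fixed-column polynomiality and its slope, so in practice I would present (4) as the symmetric instance of (3) and use the length-difference computation only to record the explicit form of $g_r(I,J)$.
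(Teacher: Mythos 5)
Your proposal is consistent with the paper, which offers no proof of this theorem at all: items (1)--(3) are simply quoted from Rees, and item (4) is understood as the mirror image of (3) under $I\leftrightarrow J$, $r\leftrightarrow s$, exactly as you argue via the symmetry $e_{(1,1)}(I,J)=e_{(1,1)}(J,I)$. One small caution about your second route: eventual polynomiality of $s\mapsto\lm(R/\ov{I^rJ^s})$ for fixed $r$ does not by itself identify that polynomial with $\ov{P}_{I,J}(r,\cdot)$, so the explicit formula for $g_r(I,J)$ would need a further argument --- but since the statement only asserts existence of a constant independent of $s$, your first route already suffices.
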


\begin{remark}
Since $\ov{P}_{I,J}(n,n)=\ov{P}_{IJ}(n)$ for all $n$, we also have
$$\ov{e}_{(0,0)}=\ov{e}_2(IJ).$$
\end{remark}

\begin{lemma} \label{lemma2} Let $r,s \geq 0$. Then for all $k,l \geq 1$ the
sequence:
$$0 \longrightarrow \frac{R}{\ov{I^rJ^s}} \buildrel f\over\longrightarrow
\frac{R}{\ov{I^rJ^{s+l}}} \oplus \frac{R}{\ov{I^{r+k}J^s}}
\buildrel g\over\longrightarrow
\frac{(a^k,b^l)}{a^k\ov{I^{r}J^{s+l}}+b^l\ov{I^{r+k}J^s}} \longrightarrow 0$$
is exact, where $f(x^\prime)=(-(b^lx)^\prime,(a^kx)^\prime)$ and
$g(x^\prime,y^\prime)=(a^kx)^\prime+(b^ly)^\prime$. Here $'$ denotes image 
of an element in the respective ring. 
\end{lemma}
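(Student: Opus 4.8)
The plan is to verify the requirements for a short exact sequence---well-definedness of $f,g$, surjectivity of $g$, injectivity of $f$, and $\im f=\ker g$---and I would begin by recording the structural facts I will lean on. Since $a\in I$ and $b\in J$, we have $a^k\ov{I^rJ^s}\subseteq\ov{I^{r+k}J^s}$ and $b^l\ov{I^rJ^s}\subseteq\ov{I^rJ^{s+l}}$ for all $k,l$, so $f$ kills $\ov{I^rJ^s}$ in each summand while $g$ sends the submodule $\ov{I^rJ^{s+l}}\oplus\ov{I^{r+k}J^s}$ into the relations $a^k\ov{I^rJ^{s+l}}+b^l\ov{I^{r+k}J^s}$; hence both maps descend to the displayed quotients. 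Surjectivity of $g$ is immediate, as every element of $(a^k,b^l)$ is of the form $a^kx+b^ly=g(x',y')$. Next I would note that because $(a,b)$ is a joint reduction we have $\ov{I^rJ^s}\subseteq(a,b)$ for $r,s\gg0$, so $(a,b)$ is $\m$-primary; as $R$ is Cohen--Macaulay of dimension $2$, the pair $a,b$ is a system of parameters and hence a regular sequence. In particular $a,b$ are nonzerodivisors and $a^k,b^l$ is again a regular sequence for all $k,l\geq1$; these are exactly what I use in the remaining two steps.

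For injectivity of $f$ the key tool is the good joint reduction identity $(a)\cap\ov{I^mJ^s}=a\ov{I^{m-1}J^s}$, valid for every $m>0$, which I would iterate to get $(a^k)\cap\ov{I^{r+k}J^s}=a^k\ov{I^rJ^s}$. Indeed, if $a^kx\in\ov{I^{r+k}J^s}$ then $a^kx\in(a)\cap\ov{I^{r+k}J^s}=a\ov{I^{r+k-1}J^s}$, and cancelling the nonzerodivisor $a$ gives $a^{k-1}x\in\ov{I^{r+k-1}J^s}$; repeating, the index runs down through $r+k,r+k-1,\dots,r+1$, never dropping below $r+1>0$, so after $k$ steps $x\in\ov{I^rJ^s}$. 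Now if $f(x')=0$, its second coordinate gives $a^kx\in\ov{I^{r+k}J^s}$, whence $x\in\ov{I^rJ^s}$ and $x'=0$.

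It remains to prove $\im f=\ker g$. The inclusion $\im f\subseteq\ker g$ is the one-line computation $g(f(x'))=(-a^kb^lx+b^la^kx)'=0$. For the reverse inclusion I would take $(x',y')\in\ker g$, so $a^kx+b^ly=a^kp+b^lq$ with $p\in\ov{I^rJ^{s+l}}$, $q\in\ov{I^{r+k}J^s}$, and hence $a^k(x-p)=b^l(q-y)$. Here the regular-sequence property does the work: since $b^l$ is a nonzerodivisor modulo $(a^k)$, the membership $b^l(q-y)\in(a^k)$ forces $q-y=a^kz$ for some $z\in R$, and then cancelling $a^k$ from $a^k(x-p)=b^la^kz$ gives $x-p=b^lz$. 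Taking $w=-z$, I would verify $f(w')=((b^lz)',(-a^kz)')=(x',y')$, the two coordinate checks being $x-b^lz=p\in\ov{I^rJ^{s+l}}$ and $y+a^kz=q\in\ov{I^{r+k}J^s}$. This establishes exactness throughout.

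I expect the two genuine points to be (i) carrying out the iterated cancellation for injectivity while making sure each use of the good joint reduction identity sits at an index $>0$, and (ii) the step $q-y\in(a^k)$, i.e. that $b^l$ is a nonzerodivisor modulo $(a^k)$, which is precisely where the Cohen--Macaulay hypothesis and the $\m$-primariness of $(a,b)$ are needed; the rest is formal diagram-chasing.
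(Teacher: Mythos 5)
Your proof is correct and follows essentially the same route as the paper: injectivity of $f$ by iterating the good joint reduction identity and cancelling a nonzerodivisor (you work with the $a$-coordinate and $(a^k)\cap\ov{I^{r+k}J^s}=a^k\ov{I^rJ^s}$, where the paper works symmetrically with the $b$-coordinate and $(b^l)\cap\ov{I^rJ^{s+l}}=b^l\ov{I^rJ^s}$). The only difference is that you write out the verification of $\ker g\subseteq\im f$, which the paper dismisses as ``easy to see''; your observation that this step genuinely uses that $b^l$ is a nonzerodivisor modulo $(a^k)$ (i.e.\ that $(a,b)$ is a system of parameters in the Cohen--Macaulay ring $R$) is accurate and supplies exactly the detail the paper leaves implicit.
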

\begin{proof} It is easy to see that $\ker g=\image f$ and $g$ is surjective. We
prove that $f$ is injective. First we prove 
$(b^n) \cap \ov{I^rJ^{s+n}}=b^n \ov{I^rJ^s}$ for all $r,s \geq 0$ and $n \geq 1.$ Induct on $n.$
Since $(a,b)$ is a good joint reduction, the result is true for 
$n=1$. Let $n>1.$ Let $z=b^{n+1}y \in (b^{n+1}) \cap \ov{I^rJ^{s+n+1}} $. Then $
z \in (b^n) \cap \ov{I^rJ^{s+n+1}}=b^n \ov{I^r J^{s+1}}$ 
by induction hypothesis. 
Let $z=b^nu$ for some $u \in \ov{I^rJ^{s+1}}.$ Then $b^n(by-u)=0.$ Hence $by \in
(b) \cap \ov{I^rJ^{s+1}}=b\ov{I^rJ^s}.$ Thus 
$y \in \ov{I^rJ^s}$.  \\ 
Suppose $f(x^\prime)=0$. Then $b^lx \in (b^l) \cap \ov{I^rJ^{s+l}} =b^l
\ov{I^rJ^s}.$ Hence $x^\prime=0.$
\end{proof}

\begin{theorem} \label{thm2}
Fix integers $r,s \geq 0$. Then for $k,l \gg 0,$
\begin{eqnarray*}
\lm\left(\frac{\ov{I^{r+k}J^{s+l}}}{a^k\ov{I^rJ^{s+l}}+b^l\ov{I^{r+k}J^{s}}}
\right)&=&[\ov{e}_2(I)+\ov{e}_2(J)-\ov{e}_2(IJ)]+g_r(I,J)+
h_s(I,J)\\&+& rse(I|J)-\lm(R/\ov{I^rJ^s}).
\end{eqnarray*}
\end{theorem}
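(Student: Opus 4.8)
The plan is to reduce the length on the left to lengths of the form $\lm(R/\ov{I^nJ^m})$ and then to feed in the normal Hilbert polynomial together with Theorem \ref{thm1}. Write $T=\ov{I^{r+k}J^{s+l}}$ and $D=a^k\ov{I^rJ^{s+l}}+b^l\ov{I^{r+k}J^s}$. First I would record two containments that legitimise the bookkeeping: since $a\in I$ and $b\in J$ we have $a^k\ov{I^rJ^{s+l}}\subseteq\ov{I^k}\,\ov{I^rJ^{s+l}}\subseteq T$ and $b^l\ov{I^{r+k}J^s}\subseteq T$, so $D\subseteq T$; and obviously $D\subseteq(a^k,b^l)$. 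Additivity of length along $D\subseteq T\subseteq R$ and along $D\subseteq(a^k,b^l)\subseteq R$ then gives
$$\lm(T/D)=\lm(R/D)-\lm(R/T)=\lm(R/(a^k,b^l))+\lm((a^k,b^l)/D)-\lm(R/T).$$
This is the crucial move, because it never asks how $T$ and $(a^k,b^l)$ compare to each other. Lemma \ref{lemma2} now evaluates the middle term: $\lm((a^k,b^l)/D)=\lm(R/\ov{I^rJ^{s+l}})+\lm(R/\ov{I^{r+k}J^s})-\lm(R/\ov{I^rJ^s})$.

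Next I would compute $\lm(R/(a^k,b^l))=kl\,e(I|J)$. The joint reduction relation forces $\ov{I^nJ^m}\subseteq(a,b)$ for $n,m\gg0$, and as $I,J$ are $\m$-primary this makes $(a,b)$ an $\m$-primary ideal, i.e.\ a system of parameters. Hence in the Cohen--Macaulay ring $R$ the elements $a^k,b^l$ form a regular sequence and $\lm(R/(a^k,b^l))=e((a^k,b^l);R)=kl\,e((a,b);R)=kl\,\lm(R/(a,b))$. Finally $\lm(R/(a,b))=e_{(1,1)}(I,J)=e(I|J)$ by Rees's joint reduction multiplicity theorem together with Theorem \ref{thm1}(1).

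For the last step I would expand the four normal lengths for $k,l\gg0$. Putting $n=r+k$ and $m=s+l$, the length $\lm(R/T)$ is given by the full bivariate polynomial, while the two mixed lengths must be split off a single filtration so that only a genuinely large index is fed to a polynomial: $\lm(R/\ov{I^rJ^{s+l}})=\lm(R/\ov{J^{s+l}})+\lm(\ov{J^{s+l}}/\ov{I^rJ^{s+l}})$, where the first summand is the one-variable normal Hilbert polynomial of $J$ and the second equals $e(I|J)r(s+l)+g_r(I,J)$ by Theorem \ref{thm1}(4); symmetrically $\lm(R/\ov{I^{r+k}J^s})$ is handled by Theorem \ref{thm1}(3). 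Substituting all of this, the $e(I),e(J)$ and $\ov{e}_1(I),\ov{e}_1(J)$ contributions cancel, the mixed terms collect to $e(I|J)(rs-kl)$, and the $-kl\,e(I|J)$ here cancels the $+kl\,e(I|J)$ coming from $\lm(R/(a^k,b^l))$, leaving exactly $[\ov{e}_2(I)+\ov{e}_2(J)-\ov{e}_2(IJ)]+g_r(I,J)+h_s(I,J)+rs\,e(I|J)-\lm(R/\ov{I^rJ^s})$.

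The main obstacle is not any single hard estimate but the discipline of the final bookkeeping: one must avoid applying the bivariate polynomial to $\lm(R/\ov{I^rJ^{s+l}})$ and $\lm(R/\ov{I^{r+k}J^s})$, where one index stays fixed, and must instead route the corrections through $g_r$ and $h_s$ via Theorem \ref{thm1}(3)--(4); the only genuinely external input is the identification $\lm(R/(a,b))=e(I|J)$. Once the right decompositions are chosen, the structure of the target answer forces every cancellation.
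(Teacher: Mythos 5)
Your proposal is correct and follows essentially the same route as the paper: the same length decomposition through $\lm(R/(a^k,b^l))+\lm((a^k,b^l)/D)-\lm(R/T)$, the same use of Lemma \ref{lemma2}, the identification $\lm(R/(a^k,b^l))=kl\,e(I|J)$ via Rees's mixed multiplicity theorem, and the same splitting of $\lm(R/\ov{I^rJ^{s+l}})$ and $\lm(R/\ov{I^{r+k}J^s})$ through Theorem \ref{thm1}(3)--(4) before the final cancellation. The only difference is that you spell out a few steps the paper leaves implicit (the containments $D\subseteq T$ and the parameter-ideal justification of $\lm(R/(a^k,b^l))=kl\,e(I|J)$), which is harmless.
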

\begin{proof}
For all $k \geq 1$, 
$\displaystyle{\lm\left(\frac{\ov{I^{r+k}J^{s+l}}}{a^k\ov{I^rJ^{s+l}}+b^l\ov{I^{
r+k}J^{s}}}\right)}$

\begin{eqnarray*}
&=&\lm\left(\frac{R}{a^k\ov{I^rJ^{s+l}}+b^l\ov{I^{r+k}J^s}}\right)
-\lm(R/\ov{I^{r+k}J^{s+l}})\\
&=&\lm(R/(a^k,b^l))+\lm\left(\frac{(a^k,b^l)}{a^k\ov{I^rJ^{s+l}}+b^l\ov{I^{r+k}
J^s}}\right)
-\lm(R/\ov{I^{r+k}J^{s+l}})\\
&=& kl
e(I|J)+\lm(R/\ov{I^rJ^{s+l}})+\lm(R/\ov{I^{r+k}J^s})-\lm(R/\ov{I^rJ^s}
)-\lm(R/\ov{I^{r+k}J^{s+l}}),
\end{eqnarray*}
where the last equality follows from Lemma \ref{lemma2} and Rees's mixed
multiplicity theorem \cite[Theorem 2.4]{rees3}. Now 
\begin{eqnarray*}
 \lm(R/\ov{I^rJ^{s+l}})&=&\lm(R/\ov{J^{s+l}})+\lm(\ov{J^{s+l}}/\ov{I^rJ^{s+l}}).
\end{eqnarray*}
Therefore for $l \gg 0,$
\begin{eqnarray*}
 \lm(R/\ov{I^rJ^{s+l}})&=&
e(J)\binom{s+l+1}{2}+\ov{e}_2(J)+(e(I|J)r-\ov{e}_1(J))(s+l)+g_r(I,J)
\end{eqnarray*}
by Theorem \ref{thm1}(4). Similarly for $k \gg 0,$
\begin{eqnarray*}
 \lm(R/\ov{I^{r+k}J^{s}})&=&
e(I)\binom{r+k+1}{2}+\ov{e}_2(I)+(e(I|J)s-\ov{e}_1(I))(r+k)+h_s(I,J).
\end{eqnarray*}
Hence for $k,l \gg 0,$
\begin{eqnarray*}
\lm\left(\frac{\ov{I^{r+k}J^{s+l}}}{a^k\ov{I^rJ^{s+l}}+b^l\ov{I^{r+k}J^{s}}}
\right)&=&[\ov{e}_2(I)+\ov{e}_2(J)-\ov{e}_2(IJ)]+g_r(I,J)+
h_s(I,J)\\&+&rse(I|J)-\lm(R/\ov{I^rJ^s}).
\end{eqnarray*}
\end{proof}

\begin{theorem} \label{h} Let $(R,\m)$ be an analytically unramified
Cohen-Macaulay local ring of dimension $2$ with infinite residue 
field and $I$, $J$ be $\m$-primary ideals of $R$. Let $(a,b)$ be a good joint
reduction of $\mathcal I=\{\ov{I^rJ^s}\}$. Then for all 
$r,s \geq 0$,
\begin{enumerate}
 \item \noindent 
\begin{center}
$
[H^2_{(at_1,bt_2)}(\ov{\mathcal{R}^\prime})]_{(r,s)} \cong
\displaystyle\lim_{\stackrel{\longrightarrow}{k}}
\frac{\ov{I^{r+k}J^{s+k}}}{a^k\ov{I^rJ^{s+k}}+b^k\ov{I^{r+k}J^s}}.
$ 
                         \end{center}

\item For the directed system involved in the above direct limit, the map
$\mu_k$ is injective for $k\geq 1$, where $\mu_k$ is 
$$\frac{\ov{I^{r+k}J^{s+k}}}{a^k\ov{I^rJ^{s+k}}+b^k\ov{I^{r+k}J^s}}
\buildrel\mu_k\over\longrightarrow
\frac{\ov{I^{r+k+1}J^{s+k+1}}}{a^{k+1}\ov{I^rJ^{s+k+1}}+b^{k+1}\ov{I^{r+k+1}J^s}
}$$ the multiplication by $(ab)$. 
\item $\mu_k$ is surjective for all $r,s$ and large $k .$
\item 
 For $k \gg 0,$
$$ [H^2_{(at_1,bt_2)}(\ov{\mathcal{R}^\prime})]_{(r,s)}\cong
\frac{\ov{I^{r+k}J^{s+k}}}{a^k\ov{I^rJ^{s+k}}+b^k\ov{I^{r+k}J^s}}.$$

\end{enumerate}
\end{theorem}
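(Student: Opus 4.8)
The plan is to read everything off the Koszul complexes ${F^k}^.$ and the colimit formula \eqref{eq1}. For part (1), I would first identify the top cohomology of ${F^k}^.$ with $\operatorname{coker}\beta_k=\ov{\mathcal{R}^\prime}(k,k)/\im\beta_k$. Passing to the degree-$(r,s)$ component, the $(r,s)$-piece of $\ov{\mathcal{R}^\prime}(k,k)$ is $\ov{I^{r+k}J^{s+k}}$, while $\beta_k(u,v)=-(bt_2)^ku+(at_1)^kv$ carries the degree-$(r,s)$ part of $\ov{\mathcal{R}^\prime}(k,0)\oplus\ov{\mathcal{R}^\prime}(0,k)$ exactly onto $a^k\ov{I^rJ^{s+k}}+b^k\ov{I^{r+k}J^s}$. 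Since taking a fixed bigraded component commutes with the filtered colimit in \eqref{eq1}, this yields
$$[H^2_{(at_1,bt_2)}(\ov{\mathcal{R}^\prime})]_{(r,s)}\cong\lim_{\stackrel{\longrightarrow}{k}}\frac{\ov{I^{r+k}J^{s+k}}}{a^k\ov{I^rJ^{s+k}}+b^k\ov{I^{r+k}J^s}},$$
and the transition map of the directed system is induced by $h_k(1)=abt_1t_2$, i.e. multiplication by $ab$, which is precisely $\mu_k$.

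For part (2), I would fix $w\in\ov{I^{r+k}J^{s+k}}$ with $abw=a^{k+1}p+b^{k+1}q$ for some $p\in\ov{I^rJ^{s+k+1}}$, $q\in\ov{I^{r+k+1}J^s}$, and deduce $w\in a^k\ov{I^rJ^{s+k}}+b^k\ov{I^{r+k}J^s}$. The two ingredients are that $a,b$ is a regular sequence (being a joint reduction, $(a,b)$ is $\m$-primary, hence a system of parameters, hence regular in the $2$-dimensional Cohen--Macaulay ring, so $a^m$ is a nonzerodivisor modulo $b^n$ and vice versa) together with the good-joint-reduction identities and the power version $(b^k)\cap\ov{I^{r+k}J^{s+k}}=b^k\ov{I^{r+k}J^s}$ proved inside Lemma \ref{lemma2}. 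Rewriting $a^{k+1}p=b(aw-b^kq)$ shows $a^{k+1}p\in(b)$, so $p\in(b)\cap\ov{I^rJ^{s+k+1}}=b\ov{I^rJ^{s+k}}$; writing $p=bp_1$ and cancelling $b$ gives $a(w-a^kp_1)=b^kq\in(a)\cap\ov{I^{r+k+1}J^{s+k}}=a\ov{I^{r+k}J^{s+k}}$. Writing $b^kq=a\xi$ and cancelling $a$ yields $w=a^kp_1+\xi$ with $a\xi\in(b^k)$, whence $\xi\in(b^k)\cap\ov{I^{r+k}J^{s+k}}=b^k\ov{I^{r+k}J^s}$. This regular-sequence/good-reduction bookkeeping is the part I expect to be the main obstacle, since it is the only genuinely nontrivial step.

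For parts (3) and (4), I would invoke Theorem \ref{thm2} with $l=k$: for $k\gg0$ the module $M_k=\ov{I^{r+k}J^{s+k}}/(a^k\ov{I^rJ^{s+k}}+b^k\ov{I^{r+k}J^s})$ has finite length equal to the constant $\ov{e}_2(I)+\ov{e}_2(J)-\ov{e}_2(IJ)+g_r(I,J)+h_s(I,J)+rse(I|J)-\lm(R/\ov{I^rJ^s})$, independent of $k$. Hence for $k\gg0$ the source $M_k$ and target $M_{k+1}$ of $\mu_k$ have the same finite length, so the injection from part (2) is forced to be an isomorphism, giving the surjectivity in part (3). Finally, a filtered colimit of a directed system whose transition maps are eventually isomorphisms is canonically identified with any sufficiently late term, which gives part (4): for $k\gg0$,
$$[H^2_{(at_1,bt_2)}(\ov{\mathcal{R}^\prime})]_{(r,s)}\cong\frac{\ov{I^{r+k}J^{s+k}}}{a^k\ov{I^rJ^{s+k}}+b^k\ov{I^{r+k}J^s}}.$$
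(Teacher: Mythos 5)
Your proposal is correct and follows essentially the same route as the paper: identify the graded pieces of the Koszul colimit for (1), use the regular sequence $(a,b)$ together with the good-joint-reduction intersection identities for the injectivity in (2), and deduce (3) and (4) from the length stabilization in Theorem \ref{thm2}. The only difference is cosmetic: in (2) the paper extracts $p=bp'$ and $q=aq'$ symmetrically and cancels $ab$ at the end, whereas you cancel sequentially and invoke the power version $(b^k)\cap\ov{I^{r+k}J^{s+k}}=b^k\ov{I^{r+k}J^s}$ from Lemma \ref{lemma2}; both are valid.
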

\begin{proof} 
\begin{enumerate}
 \item 

Local cohomology modules have a natural
$\mathbb{Z}^2$-grading which is inherited from the $\mathbb{Z}^2$-grading of
$\ov{\mathcal{R}^\prime}$.
Therefore by equation \ref{eq1},
$$
[H^2_{(at_1,bt_2)}(\ov{\mathcal{R}^\prime})]_{(r,s)} =
\displaystyle\lim_{\stackrel{\longrightarrow}{k}}
\frac{\ov{I^{r+k}J^{s+k}}}{(\image \beta_k)_{(r,s)}}.
$$
Since $(\image \beta_k)_{(r,s)}
\simeq[a^k\ov{I^{r}J^{s+k}}+b^k\ov{I^{r+k}J^s}],$
$$
[H^2_{(at_1,bt_2)}(\ov{\mathcal{R}^\prime})]_{(r,s)} \cong
\displaystyle\lim_{\stackrel{\longrightarrow}{k}}
\frac{\ov{I^{r+k}J^{s+k}}}{a^k\ov{I^rJ^{s+k}}+b^k\ov{I^{r+k}J^s}}.
$$

\item
Let $x \in \ov{I^{r+k}J^{s+k}}$ such that $\mu_k(\bar{x})=0.$ Therefore 
$$xab \in a^{k+1} \ov{I^rJ^{s+k+1}}+b^{k+1}\ov{I^{r+k+1}J^s}.$$ Let
$xab=a^{k+1}p+b^{k+1}q,$ for some $p \in \ov{I^{r}J^{s+k+1}}$ and 
$q \in \ov{I^{r+k+1}J^{s}}.$ Since $(a,b)$ is a regular sequence, $p \in (b)
\cap \ov{I^{r}J^{s+k+1}}=b\ov{I^{r}J^{s+k}}$. 
Let $p=bp^\prime$ for some $p^\prime \in \ov{I^{r}J^{s+k}}.$ Similarly let
$q=aq^\prime$ for some $q^\prime \in \ov{I^{r+k}J^{s}}$. Thus 
$$xab=a^{k+1}bp^\prime+b^{k+1}aq^\prime.$$
Hence $x =a^kp^\prime+b^kq^\prime \in a^k\ov{I^rJ^{s+k}}+b^k\ov{I^{r+k}J^s}.$
Thus $\mu_k $ is injective.

\item From Theorem \ref{thm2},
$\lm\left(\frac{\ov{I^{r+k}J^{s+k}}}{a^k\ov{I^rJ^{s+k}}+b^k\ov{I^{r+k}J^s}}
\right)$ is independent 
of $k$ for $k \gg 0.$ Since $\mu_k$ is injective, $\mu_k$ is surjective for large $k$.
\item Follows from $(2)$ and $(3)$.
\end{enumerate}
\end{proof}

\noindent Following theorem is a consequence of Theorem \ref{thm2} and \ref{h}.

\begin{theorem} \label{thm3}
 Under the assumptions as above for all $r, s \geq 0,$ 
\begin{eqnarray*}
 \lm([H^2_{(at_1,bt_2)}(\ov{\mathcal{R}^\prime})]_{(r,s)})&=&[\ov{e}_2(I)+\ov{e}
_2(J)-\ov{e}_2(IJ)]+g_r(I,J)+h_s(I,J)\\&+&rse(I|J)-
\lm(R/\ov{I^rJ^s}).
\end{eqnarray*}
\end{theorem}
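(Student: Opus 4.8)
The plan is to obtain this formula with no new computation, by simply combining the module isomorphism of Theorem \ref{h}(4) with the length formula already established in Theorem \ref{thm2}. Fix $r,s \geq 0$. First I would invoke Theorem \ref{h}(4), which furnishes an integer $k_0$ such that for every $k \geq k_0$ there is an isomorphism of $R$-modules
$$[H^2_{(at_1,bt_2)}(\ov{\mathcal{R}^\prime})]_{(r,s)} \cong \frac{\ov{I^{r+k}J^{s+k}}}{a^k\ov{I^rJ^{s+k}}+b^k\ov{I^{r+k}J^s}}.$$
Since isomorphic modules have equal length, I would take lengths on both sides, so that $\lm([H^2_{(at_1,bt_2)}(\ov{\mathcal{R}^\prime})]_{(r,s)})$ equals the length of the right-hand quotient for all $k \geq k_0$. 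Note that this left-hand length is a single fixed number, independent of $k$, which is exactly what makes the comparison legitimate.

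Next I would feed this into Theorem \ref{thm2}. That theorem computes the length of the quotient $\ov{I^{r+k}J^{s+l}}/(a^k\ov{I^rJ^{s+l}}+b^l\ov{I^{r+k}J^s})$ for arbitrary large exponents $k,l$; specializing to $l=k$ yields, for all sufficiently large $k$,
$$\lm\left(\frac{\ov{I^{r+k}J^{s+k}}}{a^k\ov{I^rJ^{s+k}}+b^k\ov{I^{r+k}J^s}}\right) = [\ov{e}_2(I)+\ov{e}_2(J)-\ov{e}_2(IJ)]+g_r(I,J)+h_s(I,J)+rse(I|J)-\lm(R/\ov{I^rJ^s}).$$
Enlarging $k_0$ if necessary so that both the isomorphism and this length formula hold simultaneously, I would then chain the two displays to read off the asserted expression for $\lm([H^2_{(at_1,bt_2)}(\ov{\mathcal{R}^\prime})]_{(r,s)})$, valid for every $r,s \geq 0$.

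I do not expect a genuine obstacle here, since all the substantive work has already been carried out in Theorems \ref{h} and \ref{thm2}. The only points demanding minor care are bookkeeping ones: first, matching the two independent ``$k \gg 0$'' ranges by taking the larger threshold; and second, justifying that taking lengths commutes with passing to the direct limit that defines the local cohomology component. The latter is precisely guaranteed by Theorem \ref{h}(2)--(3), where the transition maps $\mu_k$ are shown to be injective for all $k\geq 1$ and bijective for large $k$, so the directed system stabilizes and the limit agrees with its $k$-th term for $k \gg 0$. With that stabilization in hand, the proof reduces to the two-line substitution described above.
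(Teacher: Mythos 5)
Your proposal is correct and matches the paper exactly: the paper gives no separate argument for this theorem, stating only that it ``is a consequence of Theorem \ref{thm2} and \ref{h},'' which is precisely the two-step substitution you carry out. Your added care about matching the two $k\gg 0$ thresholds and about the stabilization of the directed system via Theorem \ref{h}(2)--(3) is sound bookkeeping that the paper leaves implicit.
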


\noindent Next we analyse vanishing of
$\lm([H^2_{(at_1,bt_2)}(\ov{\mathcal{R}^\prime})]_{(r,s)})$.

\begin{theorem} \label{thm4}
Let $(R,\m)$ be an analytically unramified Cohen-Macaulay local ring of
dimension $2$ with infinite residue field. Let $(a,b)$ be a good joint reduction
of $\{\ov{I^rJ^s}\}$. 
Let $r_0, s_0 \geq 0$. Then following are equivalent:\\
\rm(1)
$\ov{e}_2(I)+\ov{e}_2(J)-\ov{e}_2(IJ)=\lm(R/\ov{I^{r_0}J^{s_0}})-g_{r_0}(I,J)-h_
{s_0}(I,J)-r_0s_0e(I|J),$\\
\rm(2) $[H^2_{(at_1,bt_2)}(\ov{\mathcal{R}^\prime})]_{(r_0,s_0)}=0,$\\
\rm(3) $\ov{I^{r+1}J^{s+1}}=a\ov{I^rJ^{s+1}}+b\ov{I^{r+1}J^s} \mbox{ for all }r
\geq r_0,~s\geq s_0.$
\end{theorem}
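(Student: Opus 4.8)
The plan is to run everything off the length formula of Theorem~\ref{thm3} together with the direct-limit description of the local cohomology in Theorem~\ref{h}, establishing $(1)\Leftrightarrow(2)$ directly and then the two implications between $(2)$ and $(3)$. The equivalence $(1)\Leftrightarrow(2)$ is immediate: the module $M:=[H^2_{(at_1,bt_2)}(\ov{\mathcal R^\prime})]_{(r_0,s_0)}$ has finite length, so $M=0$ if and only if $\lm(M)=0$. By Theorem~\ref{thm3},
\[
\lm(M)=\bigl[\ov{e}_2(I)+\ov{e}_2(J)-\ov{e}_2(IJ)\bigr]+g_{r_0}(I,J)+h_{s_0}(I,J)+r_0s_0\,e(I|J)-\lm(R/\ov{I^{r_0}J^{s_0}}),
\]
and equating the right-hand side to $0$ and solving for $\ov{e}_2(I)+\ov{e}_2(J)-\ov{e}_2(IJ)$ returns exactly the identity in $(1)$.

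For $(2)\Rightarrow(3)$ I would first upgrade the single vanishing at $(r_0,s_0)$ to vanishing on the whole quadrant $\{r\ge r_0,\ s\ge s_0\}$. The point is that $H^2_{(at_1,bt_2)}(\ov{\mathcal R^\prime})$ is the top local cohomology of a two-generated ideal, hence the cokernel of $\ov{\mathcal R^\prime}_{at_1}\oplus\ov{\mathcal R^\prime}_{bt_2}\longrightarrow\ov{\mathcal R^\prime}_{abt_1t_2}$; on the localization $\ov{\mathcal R^\prime}_{abt_1t_2}$ multiplication by $at_1$ is surjective, since $m/(abt_1t_2)^k=at_1\cdot\bigl(m\,bt_2/(abt_1t_2)^{k+1}\bigr)$, and likewise for $bt_2$. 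Thus multiplication by $at_1$ and by $bt_2$ acts surjectively on $H^2$ in every bidegree, so the shift maps $[H^2]_{(r,s)}\twoheadrightarrow[H^2]_{(r+1,s)}$ and $[H^2]_{(r,s)}\twoheadrightarrow[H^2]_{(r,s+1)}$ are onto and $(2)$ propagates to $[H^2]_{(r,s)}=0$ for all $r\ge r_0,\ s\ge s_0$. Since the transition maps of Theorem~\ref{h}(2) are injective, the $k=1$ term $\ov{I^{r+1}J^{s+1}}/\bigl(a\ov{I^rJ^{s+1}}+b\ov{I^{r+1}J^s}\bigr)$ embeds into $[H^2]_{(r,s)}=0$, which is precisely $(3)$.

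For $(3)\Rightarrow(2)$ I would instead show that $(3)$ forces every term of the directed system computing $[H^2]_{(r_0,s_0)}$ to vanish, i.e.
\[
\ov{I^{r_0+k}J^{s_0+k}}=a^k\ov{I^{r_0}J^{s_0+k}}+b^k\ov{I^{r_0+k}J^{s_0}}\qquad(k\ge1),
\]
whence $[H^2]_{(r_0,s_0)}=0$ by Theorem~\ref{h}(1). I would argue by induction on $k$: the case $k=1$ is $(3)$, and for the inductive step I apply $(3)$ at $(r_0+k,s_0+k)$ and then the inductive hypothesis to the two ideals $\ov{I^{r_0+k}J^{s_0+k+1}}$ and $\ov{I^{r_0+k+1}J^{s_0+k}}$ that appear. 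This produces the required $a^{k+1}$- and $b^{k+1}$-terms, together with two cross terms $a b^k\ov{I^{r_0+k}J^{s_0+1}}$ and $a^k b\,\ov{I^{r_0+1}J^{s_0+k}}$ that must be absorbed.

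The cross terms are the technical heart of the argument, and together with the surjectivity fact above they are where I expect the real work to lie. I would clear $ab^k\ov{I^{r_0+k}J^{s_0+1}}$ by peeling off one factor of $a$ at a time: each use of $(3)$ in the form $\ov{I^{p+1}J^{s_0+1}}=a\ov{I^pJ^{s_0+1}}+b\ov{I^{p+1}J^{s_0}}$ (valid for $p\ge r_0$) splits off a summand absorbed into $b^{k+1}\ov{I^{r_0+k+1}J^{s_0}}$ while lowering the first index, and after $k$ steps the surviving term is $a^{k+1}b^k\ov{I^{r_0}J^{s_0+1}}\subseteq a^{k+1}\ov{I^{r_0}J^{s_0+k+1}}$, using $b\in J$ and $a\in I$ to reabsorb the residual powers; the second cross term is cleared symmetrically. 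With the propagation of Theorem~\ref{h}(2) driving $(2)\Rightarrow(3)$ and this absorption closing the induction in $(3)\Rightarrow(2)$, all remaining steps are formal consequences of Theorems~\ref{thm3} and~\ref{h}.
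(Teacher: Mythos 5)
Your proof is correct, and while your treatment of $(1)\Leftrightarrow(2)$ and of $(3)\Rightarrow(2)$ matches the paper's, your argument for $(2)\Rightarrow(3)$ is genuinely different. The paper first invokes Theorem \ref{h}(4) and Theorem \ref{thm2} to conclude that the vanishing of $[H^2_{(at_1,bt_2)}(\ov{\mathcal{R}^\prime})]_{(r_0,s_0)}$ forces $\ov{I^{r_0+r}J^{s_0+s}}=a^r\ov{I^{r_0}J^{s_0+s}}+b^s\ov{I^{r_0+r}J^{s_0}}$ for $r,s\geq N$, and then descends from $N$ down to $1$ one exponent at a time using the good-joint-reduction intersections such as $(a)\cap\ov{I^{r_0+N}J^{s_0}}=a\ov{I^{r_0+N-1}J^{s_0}}$. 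You instead use the fact that the top local cohomology of a two-generated ideal is divisible by its generators --- multiplication by $at_1$ and $bt_2$ is surjective on the \v{C}ech cokernel, hence surjective on each graded piece --- to propagate the vanishing from $(r_0,s_0)$ to the entire quadrant $r\geq r_0$, $s\geq s_0$, and then read off the $k=1$ statement from the injectivity of the transition maps in Theorem \ref{h}(2). This is cleaner and more conceptual: for this implication it bypasses the stabilization statements Theorem \ref{h}(3),(4) and the length computation of Theorem \ref{thm2}, using only formal properties of $H^2_{(at_1,bt_2)}$ together with Theorem \ref{h}(1),(2), whereas the paper's descent stays at the level of ideal arithmetic with the good joint reduction. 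For $(3)\Rightarrow(2)$ your induction is organized slightly differently (you apply $(3)$ at the top level and the inductive hypothesis to the two resulting pieces, producing two cross terms $ab^k\ov{I^{r_0+k}J^{s_0+1}}$ and $a^kb\,\ov{I^{r_0+1}J^{s_0+k}}$, where the paper's arrangement yields the single term $a^{k-1}b^{k-1}\ov{I^{r+1}J^{s+1}}$); for this to work you must, as the paper does, carry the inductive hypothesis for all base points $(r,s)$ with $r\geq r_0$, $s\geq s_0$ simultaneously so that it can be applied at $(r_0,s_0+1)$ and $(r_0+1,s_0)$, and your peeling argument then absorbs the cross terms correctly, so the induction closes.
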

\begin{proof}
 Equivalence of $(1)$ and $(2)$ follows from Theorem \ref{thm3}. Now we prove
equivalence of $(2)$ and $(3)$.\\
$(2) \Longrightarrow (3)$: Let
$[H^2_{(at_1,bt_2)}(\ov{\mathcal{R}^\prime})]_{(r_0,s_0)}=0.$ Then by Theorem
\ref{h}(4) and \ref{thm2} 
for $r,s \gg 0$, say for $r,s \geq N,$ 
$$\ov{I^{r_0+r}J^{s_0+s}}=a^r\ov{I^{r_0}J^{s_0+s}}+b^s\ov{I^{r_0+r}J^{s_0}}.$$ 
We show the above equality for all $r,s \geq 1.$ First we show that for
$s\geq N,$
$$\ov{I^{r_0+N-1}J^{s_0+s}}=a^{N-1}\ov{I^{r_0}J^{s_0+s}}+b^s\ov{I^{r_0+N-1}J^{
s_0}}.$$ 
Let $x \in \ov{I^{r_0+N-1}J^{s+s_0}}$. Then $ax \in \ov{I^{r_0+N}J^{s_0+s}}. $
Let $ax=a^N u+b^s v$ for some $u \in \ov{I^{r_0}J^{s_0+s}}$ and 
$v \in \ov{I^{r_0+N}J^{s_0}}$. Thus $v \in (a) \cap
\ov{I^{r_0+N}J^{s_0}}=a\ov{I^{r_0+N-1}J^{s_0}}$. Let $v=av^\prime$ for some 
$v^\prime \in \ov{I^{r_0+N-1}J^{s_0}}$. Thus $x=a^{N-1}u+b^s v^\prime \in
a^{N-1}\ov{I^{r_0}J^{s_0+s}}+b^s\ov{I^{r_0+N-1}J^{s_0}}$. \\
Similar argument shows that for $r \geq N,$ 
$$\ov{I^{r_0+r}J^{s_0+N-1}}=a^r\ov{I^{r_0}J^{s_0+N-1}}+b^{N-1}\ov{I^{r_0+r}J^{
s_0}}.$$ 
Continuing as above we get for all $r,s \geq 1,$
$$\ov{I^{r_0+r}J^{s_0+s}}=a^r\ov{I^{r_0}J^{s_0+s}}+b^s\ov{I^{r_0+r}J^{s_0}} 
\subseteq a \ov{I^{r_0+r-1}J^{s_0+s}}+b\ov{I^{r_0+r}J^{s_0+s-1}}.$$
Hence for all $r\geq r_0,s \geq s_0$, $\ov{I^{r+1}J^{s+1}}=a\ov{I^rJ^{s+1}}+b\ov{I^{r+1}J^s}.$\\
$(3) \Longrightarrow (2)$:
Let \begin{equation} \label{eq2}
     \ov{I^{r+1}J^{s+1}}=a\ov{I^rJ^{s+1}}+b\ov{I^{r+1}J^s} \mbox{ for } r \geq
r_0, s \geq s_0.
    \end{equation}
 We prove 
$$\ov{I^{r+k}J^{s+k}}=a^k\ov{I^rJ^{s+k}}+b^k\ov{I^{r+k}J^s}, \mbox{ for } r\geq
r_0, s\geq s_0 \mbox{ and }k \geq 1. $$ 
Induct on $k.$ Equality is true for $k=1$. Let $k >1$. Then for $r \geq r_0,
s\geq s_0,$
\begin{eqnarray*}
\ov{I^{r+k}J^{s+k}}&=&a^{k-1}\ov{I^{r+1}J^{s+k}}+b^{k-1}\ov{I^{r+k}J^{s+1}},
\mbox{ by induction }\\ 
&=&a^{k-1}[a\ov{I^rJ^{s+k}}+b^{k-1}\ov{I^{r+1}J^{s+1}}]+b^{k-1}[a^{k-1}\ov{I^{
r+1}J^{s+1}}+b\ov{I^{r+k}J^s}],\\
&=&a^k\ov{I^rJ^{s+k}}+a^{k-1}b^{k-1}\ov{I^{r+1}J^{s+1}}+b^k\ov{I^{r+k}J^s}\\
&=&a^k\ov{I^rJ^{s+k}}+b^k\ov{I^{r+k}J^s}, \mbox{ by } (\ref{eq2}).
\end{eqnarray*}
Hence by Theorem \ref{h},
$[H^2_{(at_1,bt_2)}(\ov{\mathcal{R}^\prime})]_{(r_0,s_0)}=0$.
\end{proof}

\section{Application to Reduction Number and Rees's Theorem}
\noindent Let $(R,\m)$ be a Noetherian local ring of dimension $d$. 
Let $I$ be an $\m$-primary ideal. An ideal $K \subseteq I$ is said to be a
{\it reduction} of $\{\ov{I^n}\}$ if $K\ov{I^n}=\ov{I^{n+1}}$ 
for all large $n$. A {\it minimal reduction} of $\{\ov{I^n}\}$ is a reduction of
$\{\ov{I^n}\}$ minimal with respect to inclusion. For a 
minimal reduction $K$ of $\{\ov{I^n}\}$, we set $$\ov{r}_K(I)= \sup \{n\in
\mathbb Z \mid \ov{I^n}\not= K\ov{I^{n-1}}\}.$$ 
The {\it reduction  number} $\ov{r}(I)$ of $\{\ov{I^n}\}$ is defined to be the
least $\ov{r}_K(I)$ over all possible minimal reductions 
$K$ of $\{\ov{I^n}\}$. In this section we derive Rees's Theorem \cite[Theorem
2.5]{rees} as a consequence of Theorem \ref{thm4}. 
We also derive a theorem due to T. Marley which asserts that $\ov{r}(I) \leq
k+1$ for some integer $k \geq 0$ if and only if $\lm(R/\ov{I^k})=\ov{P}_I(k)$ 
\cite[Corollary 3.8 and 3.12]{marleythesis}. 

\begin{theorem} \cite[Theorem 2.5]{rees} \label{thm7}
 Let $(R,\m)$ be an analytically unramified Cohen-Macaulay local ring of
dimension $2$ with infinite residue field. Let $I,J$ be $\m$-primary 
ideals and $(a,b)$ be a good joint reduction of $\{\ov{I^rJ^s}\}$. Then
the following are equivalent:\\
\rm(1) $\ov{e}_2(IJ)=\ov{e}_2(I)+\ov{e}_2(J),$\\
\rm(2) $[H^2_{(at_1,bt_2)}(\ov{\mathcal{R}^\prime})]_{(0,0)}=0,$\\
\rm(3) for all $r,s >0,$
$$\ov{I^rJ^s}=a\ov{I^{r-1}J^s}+b\ov{I^rJ^{s-1}}.$$
\end{theorem}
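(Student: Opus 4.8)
The plan is to obtain Theorem \ref{thm7} as the specialization of Theorem \ref{thm4} to $r_0 = s_0 = 0$. The three conditions of Theorem \ref{thm7} are, up to cosmetic rewriting, exactly conditions $(1)$, $(2)$, $(3)$ of Theorem \ref{thm4} evaluated at $(r_0,s_0)=(0,0)$, so it suffices to check that the numerical condition $(1)$ collapses correctly and that condition $(3)$ reindexes to the stated form. Condition $(2)$ is literally identical in the two statements, so no work is needed there.

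First I would evaluate the constant and the correction terms appearing in condition $(1)$ of Theorem \ref{thm4} at $(r_0,s_0)=(0,0)$. Since $\ov{I^0J^0}=\ov{R}=R$, we have $\lm(R/\ov{I^0J^0})=0$, and the term $r_0s_0e(I|J)$ vanishes. It remains to check that $g_0(I,J)=h_0(I,J)=0$. For this I specialize the defining relation of Theorem \ref{thm1}(4) to the fixed integer $r=0$: because $\ov{I^0J^s}=\ov{J^s}$, the length $\lm(\ov{J^s}/\ov{I^0J^s})$ is zero for all $s$, and comparing with $e(I|J)\cdot 0\cdot s+g_0(I,J)$ forces $g_0(I,J)=0$. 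Symmetrically, taking the fixed integer $s=0$ in Theorem \ref{thm1}(3) and using $\ov{I^rJ^0}=\ov{I^r}$ gives $h_0(I,J)=0$.

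With these substitutions, condition $(1)$ of Theorem \ref{thm4} at $(0,0)$ reads
$$\ov{e}_2(I)+\ov{e}_2(J)-\ov{e}_2(IJ)=0,$$
which is precisely condition $(1)$ of Theorem \ref{thm7}. For condition $(3)$, Theorem \ref{thm4} at $(0,0)$ gives $\ov{I^{r+1}J^{s+1}}=a\ov{I^rJ^{s+1}}+b\ov{I^{r+1}J^s}$ for all $r,s\geq 0$; setting $p=r+1$ and $q=s+1$ (so $p,q$ range over the positive integers) turns this into $\ov{I^pJ^q}=a\ov{I^{p-1}J^q}+b\ov{I^pJ^{q-1}}$ for all $p,q>0$, which is exactly condition $(3)$ of Theorem \ref{thm7}. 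Thus the equivalence of $(1)$, $(2)$, $(3)$ here follows immediately from Theorem \ref{thm4}.

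There is no substantive obstacle in this proof; it is a direct boundary specialization of the already established Theorem \ref{thm4}. The only point requiring a moment of care is the evaluation $g_0(I,J)=h_0(I,J)=0$, since these constants are introduced through an asymptotic regime and one must confirm that the $r=0$ (respectively $s=0$) specialization of Theorem \ref{thm1} is legitimate. This is harmless because Theorem \ref{thm1}(3),(4) permit the fixed index to be an arbitrary integer, $0$ included, and the relevant lengths are manifestly zero there.
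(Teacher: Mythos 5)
Your proposal is correct and is exactly the paper's argument: the paper also proves Theorem \ref{thm7} by setting $r_0=s_0=0$ in Theorem \ref{thm4} and noting $g_{0}(I,J)=h_{0}(I,J)=0$. Your additional checks (that $\lm(R/\ov{I^0J^0})=0$ and the reindexing of condition (3)) are just the details the paper leaves implicit.
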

\begin{proof}
Let $r_0=s_0=0$ in Theorem \ref{thm4}. Then $g_{r_0}(I,J)=h_{s_0}(I,J)=0$.
Therefore by Theorem \ref{thm4}, the result follows.
\end{proof}

\begin{theorem} \cite[Corollary 3.8]{marleythesis}
 Let $(R,\m)$ be an analytically unramified Cohen-Macaulay local ring of
dimension $2$ with infinite residue field. Then $\ov{r}(I) \leq 
k+1$ for some integer $k \geq 0$ if and only if $\lm(R/\ov{I^{k}})=\ov{P}_I(k)$.
\end{theorem}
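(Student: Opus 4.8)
The plan is to deduce this single--ideal statement from Theorem \ref{thm4} by specialising it to $J=I$. With $J=I$ one has $\ov{I^rJ^s}=\ov{I^{r+s}}$, so all the bigraded data collapses onto the single filtration $\{\ov{I^n}\}$. Fix $k\ge 0$ and apply Theorem \ref{thm4} with $r_0=k,\ s_0=0$ (any split $r_0+s_0=k$ works, but this one is cleanest). By Theorem \ref{reqAB} a good joint reduction $(a,b)$ of $\{\ov{I^rI^s}\}=\{\ov{I^{r+s}}\}$ exists, and since $\ov{I^n}=(a,b)\ov{I^{n-1}}$ for $n\gg 0$, the ideal $K=(a,b)$ is a minimal reduction of $\{\ov{I^n}\}$.

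Next I would translate the three conditions of Theorem \ref{thm4}. For condition $(3)$, substituting $J=I$ turns $\ov{I^{r+1}J^{s+1}}=a\ov{I^rJ^{s+1}}+b\ov{I^{r+1}J^s}$ into $\ov{I^{r+s+2}}=(a,b)\ov{I^{r+s+1}}$; as $(r,s)$ runs over $r\ge k,\ s\ge 0$ the exponent $m=r+s+1$ runs over all $m\ge k+1$, so condition $(3)$ is exactly $\ov{I^{m+1}}=(a,b)\ov{I^m}$ for all $m\ge k+1$, i.e.\ $\ov{r}_K(I)\le k+1$. For condition $(1)$ I would specialise each invariant: one has $e(I\mid I)=e_{(1,1)}(I,I)=e(I)=\ov{e}_0(I)$, the identity $\ov{e}_2(I^2)=\ov{e}_2(I)$ holds (since $\ov{P}_{I^2}(n)=\ov{P}_I(2n)$, whose constant terms agree), and from Theorem \ref{thm1} one reads off $h_0(I,I)=0$ and $g_k(I,I)=\ov{e}_0(I)\binom{k+1}{2}-\ov{e}_1(I)k$. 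Then the left-hand side $\ov{e}_2(I)+\ov{e}_2(I)-\ov{e}_2(I^2)$ collapses to $\ov{e}_2(I)$, the term $r_0s_0e(I\mid I)$ vanishes, and the right-hand side becomes $\lm(R/\ov{I^k})-\ov{e}_0(I)\binom{k+1}{2}+\ov{e}_1(I)k$. Hence condition $(1)$ reads
$$\lm(R/\ov{I^k})=\ov{e}_0(I)\binom{k+1}{2}-\ov{e}_1(I)k+\ov{e}_2(I)=\ov{P}_I(k).$$
Thus Theorem \ref{thm4} yields the chain $\lm(R/\ov{I^k})=\ov{P}_I(k)\iff[H^2_{(at_1,bt_2)}(\ov{\mathcal{R}^\prime})]_{(k,0)}=0\iff \ov{r}_K(I)\le k+1$.

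Given this, the implication $\lm(R/\ov{I^k})=\ov{P}_I(k)\Rightarrow\ov{r}(I)\le k+1$ is immediate, since $\ov{r}(I)\le\ov{r}_K(I)$ by definition of the normal reduction number as a minimum. The reverse implication is the delicate point: $\ov{r}(I)\le k+1$ only asserts that \emph{some} minimal reduction $K_0$ has $\ov{r}_{K_0}(I)\le k+1$, whereas the equivalence above is phrased through the fixed good joint reduction $K=(a,b)$. I would close this gap by establishing $\ov{r}_K(I)=\ov{r}(I)$: the pair $a,b\in I$ produced in Theorem \ref{reqAB} is chosen to avoid finitely many primes and with general residues, hence $K$ is a general minimal reduction of $\{\ov{I^n}\}$, and for general minimal reductions the reduction number attains its minimum $\ov{r}(I)$ (equivalently it equals the Castelnuovo--Mumford regularity of the normal associated graded ring, a lower bound no special reduction can undercut). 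With $\ov{r}_K(I)=\ov{r}(I)$ the displayed equivalence becomes $\lm(R/\ov{I^k})=\ov{P}_I(k)\iff\ov{r}(I)\le k+1$, as required. The main obstacle is precisely this last step; if one wishes to avoid the genericity principle, the alternative is to prove directly that $\ov{r}_{K_0}(I)\le k+1$ for an \emph{arbitrary} minimal reduction $K_0$ forces the intrinsic vanishing $[H^2_{(at_1,bt_2)}(\ov{\mathcal{R}^\prime})]_{(k,0)}=0$. This is harder, because the ``good'' colon conditions underlying Lemma \ref{lemma2} and Theorem \ref{h} are unavailable for $K_0$, so $(c,d)$ need not be a regular sequence on the modules $\ov{I^j}$ and a separate length computation would be needed.
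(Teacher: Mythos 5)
Your proposal is essentially the paper's own proof: the paper likewise specializes to $J=I$, computes $g_k(I,I)=e(I)\binom{k+1}{2}-\ov{e}_1(I)k$ (and $h_0(I,I)=0$), applies Theorem \ref{thm4} with $r_0=k$, $s_0=0$ together with $\ov{e}_2(I^2)=\ov{e}_2(I)$, and reads off the equivalence $\lm(R/\ov{I^k})=\ov{P}_I(k)\iff \ov{I^{n+2}}=(a,b)\ov{I^{n+1}}$ for $n\ge k$. The one point you flag as delicate --- passing from $\ov{r}_K(I)\le k+1$ for the particular good joint reduction $K=(a,b)$ to the intrinsic $\ov{r}(I)\le k+1$ --- is settled in the paper not by your (somewhat imprecise) genericity/regularity heuristic but by citing Marley's result that $\ov{r}_K(I)$ is independent of the choice of minimal reduction of $\{\ov{I^n}\}$ in this setting, which is the clean way to close the gap.
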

\begin{proof}
By Theorem \ref{thm1} for any $k \geq 0$ and $n \gg0$,
$$\lm(\ov{I^n}/\ov{I^{n+k}})=kn e(I)+g_k(I,I).$$ 
For $n \gg 0$, \begin{eqnarray*}
      \lm(\ov{I^n}/\ov{I^{n+k}})&=&\lm(R/\ov{I^{n+k}})-\lm(R/\ov{I^n})\\
&=&
e(I)\binom{n+k+1}{2}-\ov{e}_1(I)(n+k)+\ov{e}_2(I)\\&-&[e(I)\binom{n+1}{2}-\ov{e}
_1(I)n+\ov{e}_2(I)] \\
&=& e(I)kn+e(I)\binom{k+1}{2}-\ov{e}_1(I)k.
     \end{eqnarray*}
Hence $g_k(I,I)=e(I)\binom{k+1}{2}-\ov{e}_1(I)k$. Put $s_0=0$ and $r_0=k$ in
Theorem \ref{thm4} and use $\ov{e}_2(I)=\ov{e}_2(I^2)$ to get 
$$\ov{e}_2(I)+e(I)\binom{k+1}{2}-\ov{e}_1(I)k -\lm(R/\ov{I^k})=0 \mbox{ if and
only if } \ov{I^{n+2}}=(a,b)\ov{I^{n+1}}$$ 
for $n \geq k$, where $(a,b)$ is reduction 
of $\{\ov{I^n}\}$ such that $a^*,b^*\in \ov{I}/\ov{I^2}$ are nonzerodivisors in
$\bigoplus_{n \geq 0}\ov{I^n}/\ov{I^{n+1}}$. Since 
$\ov{r}(I)$ is independent of the minimal reduction chosen \cite[Corollary
3.8]{marleythesis}, $\ov{r}(I) \leq k+1$ if and only if
$\lm(R/\ov{I^k})=\ov{P}_I(k)$.
\end{proof}

\section{Vanishing of $\ov{e}_{2}(IJ)$}
\noindent In this section we analyze vanishing of $\ov{e}_{2}(IJ)$ in an analytically
unramified Cohen-Macaulay local ring of dimension $2$ with 
infinite residue field. We prove that $\ov{e}_{2}(IJ)=0$ if and only if
$\ov{\mathcal{R}}(I,J)$ is Cohen-Macaulay. 
\begin{definition} We 
say that the {\it normal joint reduction number} of $I$ and $J$ is zero with respect
to a joint reduction $(a,b)$ of $\{\ov{I^rJ^s}\}$ 
if for all $r,s \geq 1,$
$$\ov{I^rJ^s}=a\ov{I^{r-1}J^s}+b\ov{I^rJ^{s-1}}.$$
\end{definition}
\noindent We need an analogue of Grothendieck-Serre difference formula for
$\{\ov{I^rJ^s}\}$ \cite[Theorem 5.1]{jayanthan-verma}. 
The proof of this is similar to \cite[Theorem 5.1]{jayanthan-verma}. 

\begin{theorem}\rm[The Difference Formula] \label{difference formula}
Let $(R,\m) $ be an analytically unramified local ring of dimension $d$. 
Then for all $r,s \geq 0$,
\begin{enumerate}
 \item
$\lambda_R([H^i_{\mathcal{R}^\prime_{++}}(\ov{\mathcal{R}^\prime})]_{(r,s)}) <
\infty$.
\item
$\ov{P}_{I,J}(r,s)-\ov{H}_{I,J}(r,s)=\sum_{i=0}^d(-1)^i\lambda_R([H^i_{\mathcal{
R}^\prime_{++}}(\ov{\mathcal{R}^\prime})]_{(r,s)})$.
\end{enumerate}
\end{theorem}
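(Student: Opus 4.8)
The plan is to handle the two assertions by different mechanisms: the finiteness in (1) by localizing away from $\m$, and the difference formula in (2) by transporting the Grothendieck--Serre identity from the bigraded associated graded rings, which have finite length components, up to $\ov{\mathcal{R}^\prime}$ along the nonzerodivisors $u_1=t_1^{-1}$ and $u_2=t_2^{-1}$. Throughout I use that $R$ is analytically unramified, so by Rees's theorem the filtration $\{\ov{I^rJ^s}\}$ is Noetherian and $\ov{\mathcal{R}^\prime}$ is a finitely generated $R$-algebra; hence each bigraded component of $H^i_{\mathcal{R}^\prime_{++}}(\ov{\mathcal{R}^\prime})$ is a finitely generated $R$-module, and these components vanish for $(r,s)\gg 0$.

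For (1), fix a prime $\mathfrak p\neq\m$ of $R$. Since $I$ and $J$ are $\m$-primary, $I_{\mathfrak p}=J_{\mathfrak p}=R_{\mathfrak p}$, so $\ov{I^rJ^s}_{\mathfrak p}=R_{\mathfrak p}$ for all $r,s$ and $(\ov{\mathcal{R}^\prime})_{\mathfrak p}\cong R_{\mathfrak p}[t_1^{\pm 1},t_2^{\pm 1}]$. Under this localization $\mathcal{R}^\prime_{++}$ expands to the unit ideal, as it contains the unit $t_1t_2$ in bidegree $(1,1)$; therefore $H^i_{\mathcal{R}^\prime_{++}}(\ov{\mathcal{R}^\prime})_{\mathfrak p}=0$ for every $i$. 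Thus $\supp_R H^i_{\mathcal{R}^\prime_{++}}(\ov{\mathcal{R}^\prime})\subseteq\{\m\}$, and a finitely generated $R$-module supported only at $\m$ has finite length, proving (1).

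For (2), set $\delta(r,s)=\sum_{i=0}^d(-1)^i\lambda_R([H^i_{\mathcal{R}^\prime_{++}}(\ov{\mathcal{R}^\prime})]_{(r,s)})$, which is a well-defined integer by (1), and $D(r,s)=\ov P_{I,J}(r,s)-\ov H_{I,J}(r,s)$. Multiplication by $u_1$ gives
$$0\longrightarrow\ov{\mathcal{R}^\prime}(1,0)\buildrel u_1\over\longrightarrow\ov{\mathcal{R}^\prime}\longrightarrow\ov{\mathcal{R}^\prime}(I,J|I)\longrightarrow 0,$$
whose cokernel has finite length components $\lambda_R(\ov{I^rJ^s}/\ov{I^{r+1}J^s})=\ov H_{I,J}(r+1,s)-\ov H_{I,J}(r,s)$ and Hilbert polynomial $\ov P_{I,J}(r+1,s)-\ov P_{I,J}(r,s)$. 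Each term again has finite length local cohomology components (the cokernel is even supported at $\m$ componentwise), so the long exact sequence in $H^\bullet_{\mathcal{R}^\prime_{++}}$ yields, by additivity of Euler characteristics, $\delta(r+1,s)-\delta(r,s)=-\delta_1(r,s)$, where $\delta_1$ is the alternating sum for $\ov{\mathcal{R}^\prime}(I,J|I)$. Since this cokernel has finite length components, the bigraded Grothendieck--Serre formula for such modules (the finite length case of \cite[Theorem 5.1]{jayanthan-verma}) identifies $\delta_1(r,s)$ with $\lambda_R(\ov{I^rJ^s}/\ov{I^{r+1}J^s})$ minus its Hilbert polynomial, that is with $-(D(r+1,s)-D(r,s))$. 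Hence $\delta(r,s)-D(r,s)$ is independent of $r$; the symmetric argument with $u_2$ and $\ov{\mathcal{R}^\prime}(I,J|J)$ shows it is independent of $s$. As both $\delta$ and $D$ vanish for $(r,s)\gg 0$, this constant is $0$, giving $\delta(r,s)=D(r,s)$ for all $r,s\geq 0$, which is (2).

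The main obstacle is the finiteness underlying the whole Euler-characteristic bookkeeping, together with the fact that $\ov{\mathcal{R}^\prime}$ does not itself have finite length components: its Hilbert function is that of $R/\ov{I^rJ^s}$, not of $[\ov{\mathcal{R}^\prime}]_{(r,s)}$, so the formula cannot be obtained by applying Grothendieck--Serre to $\ov{\mathcal{R}^\prime}$ directly but must be transferred from the associated graded rings across the regular sequence $u_1,u_2$. Making this transfer rigorous requires (i) the finiteness of (1) for $\ov{\mathcal{R}^\prime}$ and for both associated graded rings, so the alternating sums are defined and additive along the long exact sequences; (ii) the vanishing of these local cohomologies in large bidegrees, to anchor the constant to $0$ at infinity; and (iii) correct tracking of the Grothendieck--Serre sign convention and of the bidegree shifts $(1,0)$ and $(0,1)$ so that the recursions for $\delta$ and for $D$ match. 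With these in place the proof proceeds exactly as in \cite[Theorem 5.1]{jayanthan-verma}.
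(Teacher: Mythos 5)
The paper does not actually give a proof of this theorem; it only remarks that the proof is similar to \cite[Theorem 5.1]{jayanthan-verma}, so there is no argument of the authors' to compare yours against step by step. Your reconstruction is correct in outline and is almost certainly the intended one: finiteness of the components by exhibiting them as finitely generated $R$-modules supported only at $\m$ (localizing at $\mathfrak p\neq\m$ does turn $\mathcal{R}^\prime_{++}$ into the unit ideal, killing all the local cohomology), and the identity itself by differencing along the two short exact sequences induced by $u_1$ and $u_2$, matching the resulting recursions for $\delta$ and $D$, and anchoring the constant at infinity where both vanish; your sign bookkeeping with $H_G-P_G=\delta_1$ checks out. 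Two inputs are, however, used more casually than they deserve. First, what you call ``the finite length case of \cite[Theorem 5.1]{jayanthan-verma}'' is not a special case of that theorem (which is the Rees-algebra statement itself, in the adic setting); it is the bigraded Serre formula, with respect to $\mathcal{R}^\prime_{++}$ rather than the maximal homogeneous ideal, for a finitely generated $\mathbb{Z}^2$-graded module with finite-length components, and that statement has its own induction-on-dimension proof which is the substantive content of \cite{jayanthan-verma} and \cite{hyry}; it should be cited or proved separately, as should the finite generation and eventual vanishing of the graded components of $H^i_{\mathcal{R}^\prime_{++}}$ that you assert at the outset. Second, your Euler characteristics are alternating sums over all $i$ in the long exact sequences, whereas the theorem truncates at $i=d$; since $\dim\ov{\mathcal{R}^\prime}=d+2$, you need the vanishing $H^i_{\mathcal{R}^\prime_{++}}(\ov{\mathcal{R}^\prime})=0$ for $i>d$ (and the analogous bound for the associated graded modules), which must be invoked explicitly for the truncated sum to agree with the one your argument produces. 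With those references supplied the proof is complete.
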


\noindent M. Herrmann, E. Hyry, J. Ribbe and Z. Tang proved that if $R$ 
is Cohen-Macaulay local ring and joint reduction number of $I$ and $J$ is zero then 
Cohen-Macaulayness of $\mathcal R(I,J)$ is equivalent to Cohen-Macaulayness of 
$\mathcal R(I)$ and $\mathcal R(J)$ \cite[Corollary 3.4]{hhrz}. The same proof 
works for the filtration $\{\ov{I^rJ^s}\}$. For the sake of completeness we give 
a proof. 
\begin{theorem} \label{thm10}
 Let $(R,\m)$ be an analytically unramified Cohen-Macaulay local ring of
dimension $2$ and $I,J$ be $\m$-primary ideals. Let the normal joint 
reduction number of $I$ and $J$ be zero with respect to some good joint
reduction. Then $\ov{\mathcal R}(I,J)$ is
Cohen-Macaulay if and 
only if $\ov{\mathcal R}(I)=\bigoplus_{n \geq 0}\ov{I^n}t^n$ and $\ov{\mathcal
R}(J)=\bigoplus_{n \geq 0}\ov{J^n}t^n$ are Cohen-Macaulay.
\end{theorem}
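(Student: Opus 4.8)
The plan is to reduce the statement to a single short exact sequence obtained by killing the joint reduction elements, and then read off Cohen-Macaulayness from the resulting long exact sequence in local cohomology. Throughout I write $\mathcal A=\ov{\mathcal R}(I,J)$, a bigraded ring of dimension $4$, and let $\mathfrak M$ denote its maximal bigraded ideal.

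First I would show that $at_1,bt_2$ is a regular sequence on $\mathcal A$. Since $a\in I$ and $b\in J$ are nonzerodivisors on $R$, multiplication by $a$ on each piece $\ov{I^rJ^s}\hookrightarrow\ov{I^{r+1}J^s}$ is injective, so $at_1$ is a nonzerodivisor; using the good joint reduction identities $(a)\cap\ov{I^rJ^s}=a\ov{I^{r-1}J^s}$ and $(b)\cap\ov{I^rJ^s}=b\ov{I^rJ^{s-1}}$ together with the fact that $(a,b)$ is a regular sequence in $R$, one checks directly that $bt_2$ remains a nonzerodivisor modulo $at_1$. Consequently $\mathcal A$ is Cohen-Macaulay if and only if $C:=\mathcal A/(at_1,bt_2)$ is Cohen-Macaulay, since dimension and depth each drop by $2$. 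The same bookkeeping shows $at$ is a nonzerodivisor on $\ov{\mathcal R}(I)$ and $bt$ on $\ov{\mathcal R}(J)$, so $\ov{\mathcal R}(I)$ (resp. $\ov{\mathcal R}(J)$) is Cohen-Macaulay if and only if $A:=\ov{\mathcal R}(I)/(at)$ (resp. $B:=\ov{\mathcal R}(J)/(bt)$) is, and all three of $A,B,C$ have dimension $2$.

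Next I would use the hypothesis of normal joint reduction number zero to identify $C$. In bidegree $(r,s)$ the ideal $(at_1,bt_2)$ equals $a\ov{I^{r-1}J^s}+b\ov{I^rJ^{s-1}}$, which by the normal joint reduction number zero assumption equals $\ov{I^rJ^s}$ for all $r,s\geq 1$; hence $C$ is concentrated in bidegrees $(r,0)$ and $(0,s)$. There one has $C_{(r,0)}=\ov{I^r}/a\ov{I^{r-1}}=A_r$ and $C_{(0,s)}=\ov{J^s}/b\ov{J^{s-1}}=B_s$, with the two strands glued along $C_{(0,0)}=R$. Thus $C$ is the fibre product $A\times_R B$ over the two augmentation maps $A\to A/A_+=R$ and $B\to B/B_+=R$, and there is a short exact sequence of $\mathcal A$-modules
$$0\longrightarrow C\longrightarrow A\oplus B\longrightarrow R\longrightarrow 0,$$
where $R$ sits in bidegree $(0,0)$.

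Finally I would apply $H^\bullet_{\mathfrak M}(-)$ to this sequence. By independence of the base, $H^i_{\mathfrak M}(A),H^i_{\mathfrak M}(B),H^i_{\mathfrak M}(R)$ agree with the local cohomology computed over $A,B,R$ respectively. Since $R$ is Cohen-Macaulay of dimension $2$, we have $H^0_{\mathfrak M}(R)=H^1_{\mathfrak M}(R)=0$, so the long exact sequence degenerates into isomorphisms $H^0_{\mathfrak M}(C)\cong H^0_{\mathfrak M}(A)\oplus H^0_{\mathfrak M}(B)$ and $H^1_{\mathfrak M}(C)\cong H^1_{\mathfrak M}(A)\oplus H^1_{\mathfrak M}(B)$. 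As $A,B,C$ all have dimension $2$, each is Cohen-Macaulay precisely when its $H^0_{\mathfrak M}$ and $H^1_{\mathfrak M}$ vanish; the displayed isomorphisms then show $C$ is Cohen-Macaulay if and only if both $A$ and $B$ are, and combining with the first paragraph yields the desired equivalence. I expect the main obstacle to be the careful verification that $at_1,bt_2$ is a regular sequence, together with the bookkeeping that makes the fibre-product identification of $C$ and the independence-of-base step for the local cohomology functors fully precise.
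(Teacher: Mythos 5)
Your proposal is correct and follows the same skeleton as the paper's proof: show $(at_1,bt_2)$ is a regular sequence on $\ov{\mathcal R}(I,J)$ (the paper does exactly your computation, using $(a)\cap\ov{I^rJ^s}=a\ov{I^{r-1}J^s}$ to see that $bt_2$ stays regular modulo $at_1$), and then observe that normal joint reduction number zero forces the quotient $C=\ov{\mathcal R}(I,J)/(at_1,bt_2)$ to be concentrated on the two coordinate axes, i.e.\ $C=R\oplus S_1^+\oplus S_2^+$ with $S_1=R\oplus S_1^+\simeq\ov{\mathcal R}(I)/(at)$ and $S_2\simeq\ov{\mathcal R}(J)/(bt)$. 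The one place you diverge is the final step: the paper simply cites Lemma 3.2 of Herrmann--Hyry--Ribbe--Tang for the equivalence ``$C$ is Cohen--Macaulay iff $S_1$ and $S_2$ are,'' whereas you prove it directly from the fibre-product exact sequence $0\to C\to S_1\oplus S_2\to R\to 0$ and the resulting isomorphisms $H^i_{\mathfrak M}(C)\cong H^i_{\mathfrak M}(S_1)\oplus H^i_{\mathfrak M}(S_2)$ for $i=0,1$ (valid because $H^0_{\m}(R)=H^1_{\m}(R)=0$). This is essentially a proof of the cited lemma in the case at hand, and it makes your argument self-contained at the cost of a little extra bookkeeping (checking that $S_1,S_2,R$ are quotients of $C$ so the sequence is one of $C$-modules, that all three rings $C,S_1,S_2$ have dimension $2$, and that Cohen--Macaulayness is detected by local cohomology at the maximal homogeneous ideal). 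Both routes are sound; yours buys independence from the reference, the paper's buys brevity.
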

\begin{proof}
 Let $(a,b)$ be a good joint reduction of $\{\ov{I^rJ^s}\}$ such that 
$$\ov{I^rJ^s}=a\ov{I^{r-1}J^s}+b\ov{I^rJ^{s-1}}$$ for all $r,s \geq 1.$ Let 
$$S:=\ov{\mathcal R}(I,J)/(at_1,bt_2)=R \oplus S_1^+ \oplus S_2^+,$$
where $S_1^+=\bigoplus_{n \geq 1} \ov{I^n}/a\ov{I^{n-1}}$ and
$S_2^+=\bigoplus_{n \geq 1}\ov{J^n}/b\ov{J^{n-1}}$. Note that 
\begin{eqnarray*}
 S_1&=&R \oplus S_1^+ \simeq \ov{\mathcal{R}}(I)/(at)\\
\mbox{ and }S_2 &=& R \oplus S_2^+ \simeq \ov{\mathcal R}(J)/(bt).
\end{eqnarray*}
The elements $at$ and $bt$ are nonzerodivisors in $\ov{\mathcal R}(I)$ and
$\ov{\mathcal R}(J)$ respectively. Hence 
$\ov{\mathcal R}(I)$ and $\ov{\mathcal R}(J)$ are Cohen-Macaulay if and only if
$S_1$ and $S_2 $ are Cohen-Macaulay. By Lemma \cite[Lemma 3.2]
{hhrz} it is enough to prove that $\ov{\mathcal R}(I,J)$ is Cohen-Macaulay if
and only if $S$ is Cohen-Macaulay. 
Since $S=\ov{\mathcal{R}}(I,J)/(at_1,bt_2)$ we show that $(at_1,bt_2)$ is a
regular sequence on 
$\ov{\mathcal R}(I,J).$ We prove that $bt_2$ is $\ov{\mathcal
R}(I,J)/(at_1)$-regular. Let $z bt_2\in(at_1).$ We may 
assume $z=vt_1^rt_2^s$ for some $v \in \ov{I^rJ^s}$ and $r>0$. Then $vb =a w$
for some $w \in \ov{I^{r-1}J^{s+1}}.$ Thus $v \in (a) \cap \ov{I^rJ^s} =
a\ov{I^{r-1}J^s}.$ Hence $z \in (at_1). $ Therefore $(at_1,bt_2) $ is a regular
sequence.  
\end{proof}

\noindent We now prove a few preliminary results in order to show that 
Cohen-Macaulayness of $\ov{\mathcal R}(I,J)$ implies that of $\ov{\mathcal R}(IJ)$ 
if $R$ is analytically unramified local ring and $I,J$ have positive height. We recall 
some definitions and notation 
from \cite{hyry}. Let $T$ be an $\mathbb{N}^2$-graded ring defined over a local ring and let $M$ be a 
finitely generated $\mathbb{N}^2$-graded $T$-module. Let $\mathcal{M}$ be the maximal homogeneous 
ideal of $T$. E. Hyry \cite{hyry} defined the {\it $a$-invariants} of $T$ as:
\begin{eqnarray*}
 a^1(M)&=&\sup\{k\in \mathbb{Z}\mid [H^{\dim M}_{\mathcal{M}}(M)]_{(k,q)} \neq 0 \mbox{ for some }q \in \mathbb{Z}\}\\
a^2(M)&=&\sup\{k\in \mathbb{Z}\mid [H^{\dim M}_{\mathcal{M}}(M)]_{(p,k)} \neq 0 \mbox{ for some }p \in \mathbb{Z}\}.
\end{eqnarray*}
Let $M^{\Delta}=\bigoplus_{n}M_{(n,n)}, T_1=\bigoplus_{p\geq 0}T_{(p,0)}, T_2=\bigoplus_{q \geq 0}T_{(0,q)}$ 
and $T^+=\bigoplus_{p,q \geq 1}T_{(p,q)}$. E. Hyry proved that if $T$ is standard 
bigraded Cohen-Macaulay ring of dimension $d+2$ such that 
$\dim T_1,\dim T_2<d+2$ and $a^1(T),a^2(T)<0$ then $T^{\Delta}$ is 
Cohen-Macaulay \cite[Theorem 2.5]{hyry}. The same proof shows that 
if $M$ is Cohen-Macaulay of dimension $d+2$ such that 
$a^1(M),a^2(M)<0$ then $M^\Delta $ is Cohen-Macaulay. For 
the sake of completeness we give a proof. We recall following change 
of grading principle from \cite{hyry}.

\begin{theorem} \cite{hyry}
 Let $T$ be $\mathbb{Z}^r-$graded ring and $\mathcal{U}$ be a homogeneous ideal 
of $T$. Given a homomorphism $\phi: \mathbb{Z}^r 
\rightarrow \mathbb{Z}^q$, set 
$$M^\phi=\bigoplus_{{\bf m}\in \mathbb{Z}^q}\left( \bigoplus_{\phi({\bf n})={\bf m}}M_{\bf n}\right)$$
for any $r-$graded $T-$module $M$ . Then 
$$(H^i_\mathcal U(M))^\phi=H^i_{\mathcal U^\phi}(M^\phi).$$
\end{theorem}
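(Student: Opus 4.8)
The plan is to observe that passing from the $\mathbb{Z}^r$-grading to the $\mathbb{Z}^q$-grading via $\phi$ changes none of the underlying objects, but only relabels their homogeneous pieces. Indeed the underlying (ungraded) ring of $T^\phi$ is $T$, the underlying module of $M^\phi$ is $M$, and $\mathcal{U}^\phi$ is the ideal $\mathcal{U}$ itself; what changes is that $(T^\phi)_{\mathbf m}=\bigoplus_{\phi(\mathbf n)=\mathbf m}T_{\mathbf n}$, and similarly for $M$ and $\mathcal U$. Since $\phi$ is additive, $T_{\mathbf n}T_{\mathbf n'}\subseteq T_{\mathbf n+\mathbf n'}$ lands in $(T^\phi)_{\phi(\mathbf n)+\phi(\mathbf n')}$, so $T^\phi$ is a $\mathbb Z^q$-graded ring, $M^\phi$ a $\mathbb Z^q$-graded $T^\phi$-module, and $\mathcal U^\phi$ a homogeneous ideal. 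In particular $H^i_{\mathcal U}(M)$ and $H^i_{\mathcal U^\phi}(M^\phi)$ are equal as ungraded modules, and the entire content of the theorem is that their gradings correspond under $\phi$. First I would record that $(-)^\phi$ is exact, since it leaves underlying modules and maps untouched and merely regroups the homogeneous components (possibly into infinite direct sums when $\phi$ fails to be injective).

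I would then compute both sides from one and the same complex. Choose generators $x_1,\dots,x_t$ of $\mathcal U$ that are homogeneous for the $\mathbb Z^r$-grading; they are automatically homogeneous for the $\mathbb Z^q$-grading of $T^\phi$ and still generate $\mathcal U^\phi$. By \cite[Theorem 5.2.9]{bs} (as in (\ref{eq1})), $H^i_{\mathcal U}(M)=\varinjlim_k H^i(F^{k\bullet})$, where $F^{k\bullet}$ is the Koszul complex on $x_1^k,\dots,x_t^k$ (equivalently one may use the \v{C}ech complex on $x_1,\dots,x_t$). Because each $x_j$ is $\mathbb Z^r$-homogeneous, every term of this complex is a $\mathbb Z^r$-graded module and every differential preserves degree, so $H^i_{\mathcal U}(M)$ inherits a natural $\mathbb Z^r$-grading.

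The key step is the compatibility of localization with the regrading. For a homogeneous $x$ of $\mathbb Z^r$-degree $\mathbf d$, the localization $M_x$ is $\mathbb Z^r$-graded with $m/x^k$ in degree $\deg m-k\mathbf d$; applying $\phi$ places it in $\mathbb Z^q$-degree $\phi(\deg m)-k\,\phi(\mathbf d)$, which is exactly its degree in $(M^\phi)_x$. Hence $(M_x)^\phi\cong(M^\phi)_x$ as $\mathbb Z^q$-graded $T^\phi$-modules, and consequently the Koszul complex computing $H^i_{\mathcal U^\phi}(M^\phi)$ is precisely $(F^{k\bullet})^\phi$. Since $(-)^\phi$ is exact it commutes with cohomology of a complex and with direct limits, whence
$$\left(H^i_{\mathcal U}(M)\right)^\phi=\left(\varinjlim_k H^i(F^{k\bullet})\right)^\phi=\varinjlim_k H^i\!\left((F^{k\bullet})^\phi\right)=H^i_{\mathcal U^\phi}(M^\phi).$$

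I expect the only genuine difficulty to lie in this grading bookkeeping: one must verify that the $\mathbb Z^q$-grading which the \v{C}ech/Koszul computation puts on $H^i_{\mathcal U^\phi}(M^\phi)$ coincides on the nose with the $\phi$-relabelling of the $\mathbb Z^r$-grading on $H^i_{\mathcal U}(M)$, which is exactly the identification $(M_x)^\phi\cong(M^\phi)_x$ above. The failure of $\phi$ to be injective, which turns the homogeneous pieces of $M^\phi$ into possibly infinite direct sums, causes no difficulty, since all exactness is tested degreewise in the original $\mathbb Z^r$-grading, where nothing has changed.
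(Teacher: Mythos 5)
The paper itself offers no proof of this statement: it is recalled verbatim from Hyry's paper (Trans.\ Amer.\ Math.\ Soc.\ \textbf{351} (1999)) as a ``change of grading principle,'' so there is nothing internal to compare against. Your argument is correct and is essentially the standard one: the regrading functor $(-)^\phi$ is the identity on underlying modules and hence exact and compatible with direct limits, localization at a $\mathbb{Z}^r$-homogeneous element commutes with the regrading, and therefore the \v{C}ech/Koszul complex computing $H^i_{\mathcal U^\phi}(M^\phi)$ is the $\phi$-regrading of the one computing $H^i_{\mathcal U}(M)$. The only point worth making explicit is that you need $\mathcal U$ to be generated (up to radical) by finitely many homogeneous elements, which holds in the Noetherian setting in which Hyry and the present paper work.
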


\begin{theorem} \label{thm12}
 Let $T$ be a standard bigraded ring and let $M$ be a $\mathbb{N}^2-$graded Cohen-Macaulay 
$T-$module of dimension $d+2$ such that 
$\dim M^\Delta=d+1$ and $a^1(M),a^2(M)<0$. Then $M^\Delta$ is Cohen-Macaulay.
\end{theorem}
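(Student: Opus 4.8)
The plan is to prove the cohomological criterion for the Cohen--Macaulay property: since $\dim M^\Delta = d+1$, it suffices to show that the graded local cohomology $H^i_{\mathcal M^\Delta}(M^\Delta)$ vanishes for every $i\le d$, where $\mathcal M^\Delta=\m\oplus\bigoplus_{n\ge 1}T_{(n,n)}$ is the maximal homogeneous ideal of $T^\Delta$ (see \cite{bruns-herzog}). The first step is to pass from the diagonal ring back to the bigraded ring $T$. Because $T$ is standard bigraded, $T_{(n,n)}=(T_{(1,1)})^n$ and $T^+=T_{(1,1)}T$, so the extension of $\mathcal M^\Delta$ to $T$ is $\mathcal M^\Delta T=\m T+T^+$. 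As the diagonal functor $(-)^\Delta$ is exact and $\mathcal M^\Delta T$ is generated by $\m$ together with elements of $T_{(1,1)}=[T^\Delta]_1$, computing local cohomology by the \v{C}ech complex on these generators commutes with $(-)^\Delta$; hence $H^i_{\mathcal M^\Delta}(M^\Delta)\cong[H^i_{\mathcal M^\Delta T}(M)]^\Delta$ for all $i$.

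Next I would feed in the Cohen--Macaulayness of $M$ through a Mayer--Vietoris sequence. Put $\mathfrak a=\m T+T_1^+$ and $\mathfrak b=\m T+T_2^+$. Then $\mathfrak a+\mathfrak b=\mathcal M$, and since $T_1^+\cap T_2^+=T^+$ one has $\sqrt{\mathfrak a\cap\mathfrak b}=\sqrt{\m T+T^+}=\sqrt{\mathcal M^\Delta T}$. The Mayer--Vietoris sequence
\[\cdots\to H^i_{\mathcal M}(M)\to H^i_{\mathfrak a}(M)\oplus H^i_{\mathfrak b}(M)\to H^i_{\mathcal M^\Delta T}(M)\to H^{i+1}_{\mathcal M}(M)\to\cdots\]
together with $H^i_{\mathcal M}(M)=0$ for $i\ne d+2$ (as $M$ is Cohen--Macaulay of dimension $d+2$) yields, for every $i\le d$, an isomorphism $H^i_{\mathfrak a}(M)\oplus H^i_{\mathfrak b}(M)\cong H^i_{\mathcal M^\Delta T}(M)$. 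Applying the exact functor $(-)^\Delta$ and the isomorphism of the first paragraph gives, for $i\le d$,
\[H^i_{\mathcal M^\Delta}(M^\Delta)\cong[H^i_{\mathfrak a}(M)]^\Delta\oplus[H^i_{\mathfrak b}(M)]^\Delta.\]

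It therefore remains to prove that $[H^i_{\mathfrak a}(M)]^\Delta=[H^i_{\mathfrak b}(M)]^\Delta=0$ for $i\le d$, and this is the step where the hypotheses $a^1(M),a^2(M)<0$ enter; I expect it to be the main obstacle. By symmetry consider $\mathfrak a=\m T+T_1^+$. Since $\m\subseteq T_{(0,0)}$, local cohomology with support in $\m T$ preserves the bigrading, and $\Gamma_{\mathfrak a}=\Gamma_{\m T}\circ\Gamma_{T_1^+}$; the associated Grothendieck spectral sequence $E_2^{p,q}=H^p_{\m T}(H^q_{T_1^+}(M))\Rightarrow H^{p+q}_{\mathfrak a}(M)$ shows that a diagonal component $[H^i_{\mathfrak a}(M)]_{(n,n)}$ can be nonzero only if $H^q_{T_1^+}(M)$ is nonzero in some bidegree with first coordinate $n$. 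Regrading by the first coordinate via the change of grading principle \cite{hyry} (take $\phi(p,q)=p$) identifies $H^q_{T_1^+}(M)$ with the local cohomology of $M$ along the first direction; because $M$ is Cohen--Macaulay only the top such module survives, and its first--coordinate degrees are controlled by $a^1(M)$, so $a^1(M)<0$ forces $[H^i_{\mathfrak a}(M)]_{(n,n)}=0$ for all $n\ge 0$, hence for all $n$ since $M^\Delta$ is $\mathbb N$-graded. The symmetric computation with $a^2(M)<0$ kills $[H^i_{\mathfrak b}(M)]^\Delta$. The delicate point, and the crux of the argument, is exactly the comparison between the $a$-invariant $a^1(M)$ defined through $H^{d+2}_{\mathcal M}(M)$ and the first--direction cohomology $H^\bullet_{T_1^+}(M)$, where the Cohen--Macaulay concentration of $M$ and the change of grading principle must be combined. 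Putting the three steps together, $H^i_{\mathcal M^\Delta}(M^\Delta)=0$ for all $i\le d$, and since $\dim M^\Delta=d+1$ this is precisely the assertion that $M^\Delta$ is Cohen--Macaulay.
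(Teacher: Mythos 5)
Your architecture coincides with the paper's: reduce to $H^i_{\mathcal M^\Delta}(M^\Delta)=0$ for $i\le d$, identify this group with the diagonal part of $H^i_{\m T+T^+}(M)$ (your \v{C}ech-complex argument on diagonal-degree generators is fine; this is Lemma 2.2 of \cite{hyry}), and use Mayer--Vietoris for $\mathfrak a=\m T+T_1^+$, $\mathfrak b=\m T+T_2^+$ together with $H^i_{\mathcal M}(M)=0$ for $i\ne d+2$ to split $H^i_{\m T+T^+}(M)\cong H^i_{\mathfrak a}(M)\oplus H^i_{\mathfrak b}(M)$ for $i\le d$. Up to there the proposal is correct and is exactly what the paper does. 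The genuine gap is the step you yourself call ``the crux'': you never prove that $a^1(M)<0$ forces $[H^i_{\mathfrak a}(M)]_{(p,q)}=0$ for $p\ge 0$ and all $i\le d$. Your sketch rests on the claim that, because $M$ is Cohen--Macaulay, ``only the top'' module $H^q_{T_1^+}(M)$ survives and its first-coordinate degrees are controlled by $a^1(M)$. Neither half is automatic: Cohen--Macaulayness concentrates $H^\bullet_{\mathcal M}(M)$, not $H^\bullet_{T_1^+}(M)$ (local cohomology of a Cohen--Macaulay module with respect to an ideal smaller than the maximal homogeneous one can be nonzero in several cohomological degrees), and $a^1(M)$ is defined through $H^{d+2}_{\mathcal M}(M)$, so transferring the degree bound from $\mathcal M$ down to $\mathfrak a$ is precisely the content that has to be supplied. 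The paper supplies it by localizing at the maximal homogeneous ideal of an edge subring, regrading by one coordinate via the change-of-grading principle so that the module becomes $\mathbb N$-graded over a local ring, and then invoking the comparison lemma \cite[Lemma 2.3]{hyry}, which transfers simultaneous vanishing of all $[H^i(-)]_n$ for $n\ge n_0$ between the maximal homogeneous ideal and the ideal $\mathfrak a$. Without that lemma or an equivalent, your third step is an assertion, not a proof.

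There is also a smaller but real error: you need $[H^i_{\mathfrak a}(M)]_{(n,n)}=0$ for all $n\in\mathbb Z$, and your justification for $n<0$ --- ``hence for all $n$ since $M^\Delta$ is $\mathbb N$-graded'' --- is not a valid inference, since local cohomology of an $\mathbb N$-graded module is typically nonzero in negative degrees (the top local cohomology of a polynomial ring, for instance). The correct reason, used in the paper, is that $M$ itself is $\mathbb N^2$-graded: by the change-of-grading principle $H^i_{\mathfrak a}(M)$ decomposes along the second coordinate into local cohomologies of the strands $M(-,q)$, which vanish for $q<0$; the symmetric statement handles $\mathfrak b$. This part is fixable, but as written the inference is false.
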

\begin{proof}
Set $\mathcal M_1^+=(\m \bigoplus T_{1}^+)T,\mathcal M_2^+=(\m \bigoplus T_{2}^+)T, 
\mathcal{M}^+=(\m\bigoplus T^+)T$. First we prove that if $i<d+1$ 
then $[H^i_{\mathcal M^+}(M)]_{(p,q)}=0$ for $p,q \geq 0$ or $p,q<0$.
We have $\mathcal{M}_1^+ + \mathcal{M}_2^+=\mathcal{M}$ and $\mathcal{M}_1^+
 \cap \mathcal{M}_2^+=\mathcal{M}^+$. Consider the 
Mayer-Vietoris sequence of local cohomology modules
\begin{eqnarray*}
 \cdots \rightarrow H^i_\mathcal M(M) \rightarrow H^i_{\mathcal M_1^+}(M) \bigoplus H^i_{\mathcal M_2^+}(M) 
\rightarrow H^i_{\mathcal{M}^+}(M) \rightarrow \cdots .
\end{eqnarray*}
Since $H^i_{\mathcal M}(M)=0$ for $i<d+2$, the homomorphism
\begin{eqnarray}\label{eq9}
 H^i_{\mathcal M_1^+}(M)\bigoplus H^i_{\mathcal M_2^+}(M) \rightarrow H^i_{\mathcal M^+}(M)
\end{eqnarray}
is an isomorphism for $i<d+1$. 
Let $M(-,q)=\bigoplus_{p\geq 0} M_{(p,q)}$ 
and $M(p,-)=\bigoplus_{q\geq 0}M_{(p,q)}$. Since $M=\bigoplus_{q \geq 0}M(-,q)=
\bigoplus_{p\geq 0}M(p,-)$ by change of grading principle, we have
$$[H^i_{\mathcal{M}_1^+}(M)]_{(p,q)}=[H^i_{{\mathcal M}(-,0)}(M(-,q))]_p \mbox{ and }
[H^i_{\mathcal M_2^+}(M)]_{(p,q)}=[H^i_{{\mathcal M}(0,-)}(M(p,-))]_q$$
for all $p,q \in \mathbb{Z}$ and $i \geq 0$. See \cite{hyry}. Therefore 
$$[H^i_{\mathcal M_1^+}(M)]_{(p,q)}=0 \mbox{ if }q<0 \mbox{ and }
[H^i_{\mathcal{M}_2^+}(M)]_{(p,q)}=0 \mbox{ if }p<0.$$ 
Hence by \ref{eq9}, $[H^i_{\mathcal M^+}(M)]_{(p,q)}=0$ for $p,q<0$ and $i<d+1$. 
Let $\mathcal{N}$ be the maximal homogeneous ideal of $T_1$. Then $M_\mathcal N=
\bigoplus_{q \geq 0}M(-,q)_{\mathcal{N}}$ is 
$\mathbb{N}$-graded module over a local ring $T_1{_\mathcal N}$. Since 
$[H^i_{\mathcal M}(M)]_{(p,q)}=0$ for all $q \geq 0$ and $i\geq 0$, 
$[H^i_{\mathcal P}(M_\mathcal N)]_q=0$ for all $q \geq 0$ and $i \geq 0$, 
where $\mathcal P=\mathcal N(T_1){_\mathcal N} \bigoplus_{q>0} [T_\mathcal N]_q$ 
is maximal homogeneous ideal of $T_\mathcal{N}$. 
Therefore by Lemma \cite[Lemma 2.3]{hyry}, 
$[H^i_{\mathcal{M}_2^+}(M)]_{(p,q)}=0$ for all $q \geq 0 $ and $i \geq 0$. Similarly 
$[H^i_{\mathcal{M}_1^+}(M)]_{(p,q)}=0$ for all $p\geq 0$ and $i\geq 0$. Hence 
by \ref{eq9}, $[H^i_{\mathcal M^+}(M)]_{(p,q)}=0$ for all $p,q \geq 0$ and $i<d+1$.
Therefore by Lemma \cite[Lemma 2.2]{hyry}, 
\begin{eqnarray*}
 [H^i_{\mathcal{M}^\Delta}(M(p,0))^\Delta]_q=0 \mbox{ if }p\geq 0, q \notin\{-p,\ldots,-1\} \mbox{ and }i<d+1. 
 \end{eqnarray*}
Hence $H^i_{\mathcal M^\Delta}(M^\Delta)=0$ for $i<d+1$. Therefore $M^\Delta$ is Cohen-Macaulay.
\end{proof}

\noindent Let $\mathcal{R}=\mathcal R(I,J)$ and 
$\ov{\mathcal R}=\ov{\mathcal{R}}(I,J)$. Suppose $S$ is a graded ring defined over a local ring 
and $\mathcal{M}$ is the maximal homogeneous ideal of $S$. For $\mathbb{N}$-graded module $M$
let $$a(M):=\max\{k\mid [H^{\dim M}_\mathcal M(M)]_k \neq0\}.$$ M. Herrmann, E. Hyry and J. Ribbe 
proved that for homogeneous ideal $I$ of positive height in a multi-graded ring $B$, 
$a(\mathcal{R}(I))=-1$ \cite{hhr}. The same proof shows the 
following:

\begin{lemma} \label{lemma3}\rm \cite[Lemma 2.1]{hhr}
 Let $B$ be a multi-graded ring of dimension $d$ defined over a local ring and 
let $I \subseteq B$ be a homogeneous ideal of positive height. Then $a(\ov{\mathcal R}(I))=-1$.
\end{lemma}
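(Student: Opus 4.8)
The plan is to compute the top graded local cohomology module of $\ov{\mathcal R}(I)$ directly and read off its largest nonvanishing degree. Write $\mathcal R=\ov{\mathcal R}(I)=\bigoplus_{n\geq 0}\ov{I^n}t^n$, let $\mathcal M$ be its maximal homogeneous ideal (so $B=\mathcal R_0$ sits in degree $0$), and put $D=\dim\mathcal R$. Since $I$ has positive height, $\ov{\mathcal R}(I)$ is integral over the ordinary Rees algebra $\mathcal R(I)$, whose dimension is $d+1$, so $D=d+1$; because the normal filtration $\{\ov{I^n}\}$ is Noetherian, $\mathcal R$ is a finitely generated graded algebra over its degree-zero part, whence $H^D_{\mathcal M}(\mathcal R)\neq 0$ by Grothendieck nonvanishing and $[H^D_{\mathcal M}(\mathcal R)]_n=0$ for $n\gg 0$ by finiteness of the $a$-invariant, cf. \cite{bruns-herzog}. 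It therefore suffices to show that the top nonvanishing degree of $H^D_{\mathcal M}(\mathcal R)$ is exactly $-1$.

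First I would set up two short exact sequences of finitely generated graded $\mathcal R$-modules. Writing $\mathcal R_+=\bigoplus_{n\geq 1}\ov{I^n}t^n$ and $\ov G=\bigoplus_{n\geq 0}\ov{I^n}/\ov{I^{n+1}}$, these are
$$0\longrightarrow \mathcal R_+\longrightarrow \mathcal R\longrightarrow B\longrightarrow 0 \qquad\text{and}\qquad 0\longrightarrow \mathcal R_+(1)\longrightarrow \mathcal R\longrightarrow \ov G\longrightarrow 0,$$
where in the second sequence $\mathcal R_+(1)$ is identified, via multiplication by $t$, with the ideal $\bigoplus_{n\geq 0}\ov{I^{n+1}}t^n$ of $\mathcal R$ (this uses only $\ov{I^{n+1}}\subseteq\ov{I^n}$ and $\ov{I^k}\,\ov{I^{n+1}}\subseteq\ov{I^{n+k+1}}$). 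Both $B$ and $\ov G$ have dimension $d<D$, so $H^D_{\mathcal M}(B)=H^D_{\mathcal M}(\ov G)=0$. Taking the long exact sequences in $H^\bullet_{\mathcal M}(-)$ and abbreviating $M=H^D_{\mathcal M}(\mathcal R)$ and $N=H^D_{\mathcal M}(\mathcal R_+)$, the first sequence gives a surjection $N\twoheadrightarrow M$ whose kernel is the image of $H^d_{\mathcal M}(B)$; since $B$ is concentrated in degree $0$, this kernel lives in degree $0$, so $N_n\cong M_n$ for all $n\neq 0$. The second sequence gives a surjection $[H^D_{\mathcal M}(\mathcal R_+(1))]_n=N_{n+1}\twoheadrightarrow M_n$ for every $n$.

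Combining the two, I obtain for every $n\neq -1$ a surjection $M_{n+1}\cong N_{n+1}\twoheadrightarrow M_n$; this is the heart of the argument. Using $M_n=0$ for $n\gg 0$ and descending through the degrees $n\geq 0$ (all of which avoid the excluded index $-1$), I conclude $M_n=0$ for all $n\geq 0$, which yields $a(\mathcal R)\leq -1$. For the reverse inequality I would use that $M\neq 0$ together with the surjections $M_{-1}\twoheadrightarrow M_{-2}\twoheadrightarrow\cdots$ valid in degrees $n\leq -2$: since $M$ vanishes in nonnegative degrees but not identically, some $M_{n_0}$ with $n_0\leq -1$ is nonzero, and a chain of surjections onto it forces $M_{-1}\neq 0$. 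Hence the top nonvanishing degree of $H^D_{\mathcal M}(\mathcal R)$ is precisely $-1$, i.e. $a(\ov{\mathcal R}(I))=-1$.

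The delicate point, which I would verify most carefully, is the bookkeeping around the single excluded degree $n=-1$: it is exactly this gap that prevents the chain of surjections from collapsing $M$ to zero, and that simultaneously pins the $a$-invariant to $-1$ rather than to a smaller value. The two places where the hypotheses genuinely enter are (i) positive height of $I$, which forces $D=d+1$ and hence $M\neq 0$, placing the relevant cohomology in top degree; and (ii) Noetherianness of the normal filtration, which gives finite generation of $\mathcal R$ and therefore the vanishing $M_n=0$ for $n\gg 0$ that seeds the downward induction. The integral-closedness of the $\ov{I^n}$ is used only through the definitions of $\ov G$ and of the ideal $\mathcal R_+(1)\cong\bigoplus_{n\geq 0}\ov{I^{n+1}}t^n$, so the argument is formally identical to the one for the ordinary Rees algebra in \cite{hhr}.
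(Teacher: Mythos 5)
Your proof is correct and takes essentially the same route as the paper's: both use the two short exact sequences $0 \to \ov{\mathcal R}(I)_+ \to \ov{\mathcal R}(I) \to B \to 0$ and $0 \to \ov{\mathcal R}(I)_+(1) \to \ov{\mathcal R}(I) \to \ov G \to 0$ to extract isomorphisms $[H^{d+1}_{\mathcal M}(\ov{\mathcal R}(I)_+)]_n \cong [H^{d+1}_{\mathcal M}(\ov{\mathcal R}(I))]_n$ for $n \neq 0$ together with surjections in every degree, and then combine these with vanishing in large degrees and nonvanishing of the top local cohomology module to force the $a$-invariant to equal $-1$. Your write-up is, if anything, slightly more explicit than the paper's about why $H^{d+1}_{\mathcal M}(B)$ and $H^{d+1}_{\mathcal M}(\ov G)$ vanish and about the bookkeeping at the excluded degree $n=-1$.
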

\begin{proof}
By localizing at maximal homogeneous ideal of $B$ we may assume that $B$ 
is local. Let $\ov{\mathcal R}(I)_+=\bigoplus_{n>0}\ov{I^n}t^n$ and 
$ \ov{G}=\bigoplus_{n\geq 0}\ov{I^n}/\ov{I^{n+1}}$. Consider the exact sequences 
$$ 0 \longrightarrow \ov{\mathcal{R}}(I)_+\longrightarrow \ov{\mathcal{R}}(I) \longrightarrow B \longrightarrow 0$$
$$0 \longrightarrow \ov{\mathcal{R}}(I)_+(1) \longrightarrow \ov{\mathcal{R}}(I) \longrightarrow \ov{G} \longrightarrow 0.$$
Let $\mathcal M$ be the maximal homogeneous ideal of $\mathcal R(I)$. 
Therefore we get the long exact sequence of local cohomology modules
$$\cdots \longrightarrow H^i_\mathcal M(\ov{\mathcal R}(I)_+) \longrightarrow H^i_\mathcal M(\ov{\mathcal R}(I)) 
\longrightarrow H^i_\mathcal{M}(B) \longrightarrow \cdots \mbox{ and }$$
$$\cdots \longrightarrow H^i_\mathcal M(\ov{\mathcal R}(I)_+(1)) \longrightarrow H^i_\mathcal M(\ov{\mathcal R}(I)) 
\longrightarrow H^i_\mathcal{M}(G)\longrightarrow \cdots .$$
This gives the isomorphisms 
$$[H^{d+1}_\mathcal M(\ov{\mathcal R}(I)_+)]_n \longrightarrow [H^{d+1}_\mathcal M(\ov{\mathcal R}(I))]_n \mbox{ for }n \neq 0$$
and the epimorphisms
$$[H^{d+1}_\mathcal M(\ov{\mathcal R}(I)_+)]_{n+1} \longrightarrow [H^{d+1}_\mathcal M(\ov{\mathcal R}(I))]_n \mbox{ for }n \in \mathbb Z.$$
Since $[H^{d+1}_\mathcal M(\ov{\mathcal R}(I)_+)]_n=0$ for $n\gg0$, $[H^{d+1}_\mathcal{M}(\ov{\mathcal R}(I))]_n=0$ 
for $n\geq 0$. If $[H^{d+1}_\mathcal M(\ov{\mathcal R }(I))]_{-1}=0$ then 
$H^{d+1}_\mathcal M(\ov{\mathcal R}(I))=0,$ a contradiction. Thus 
$a(\ov{\mathcal R}(I))=-1$.
\end{proof}

\begin{theorem} \label{thm13}
 Let $R$ be an analytically unramified local ring and let $I,J$ be ideals of positive height. 
If $\ov{\mathcal R}(I,J)$ is 
Cohen-Macaulay then $\ov{\mathcal R}(IJ)$ is Cohen-Macaulay.
\end{theorem}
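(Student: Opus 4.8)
The plan is to exhibit $\ov{\mathcal R}(IJ)$ as the diagonal of the bigraded ring $\ov{\mathcal R}(I,J)$ and then invoke Theorem~\ref{thm12}. Put $d=\dim R$, let $T=\mathcal R(I,J)=\bigoplus_{r,s\geq 0}I^rJ^st_1^rt_2^s$, which is standard bigraded over $R$, and let $M=\ov{\mathcal R}(I,J)$. Since $R$ is analytically unramified, $M$ is the integral closure of $T$ in $R[t_1,t_2]$ and hence a finite $T$-module; as $\ov{\mathcal R}(I,J)$ is Cohen-Macaulay as a ring and $T\hookrightarrow M$ is module-finite, $M$ is a Cohen-Macaulay $T$-module with $H^i_{\mathcal M}(M)=0$ for $i<\dim M$, where $\mathcal M$ is the maximal homogeneous ideal of $T$. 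Because $(IJ)^n=I^nJ^n$ we have $\ov{(IJ)^n}=\ov{I^nJ^n}$, so
$$M^\Delta=\bigoplus_{n\geq 0}\ov{I^nJ^n}t_1^nt_2^n\cong\bigoplus_{n\geq 0}\ov{(IJ)^n}t^n=\ov{\mathcal R}(IJ),$$
and it suffices to prove that $M^\Delta$ is Cohen-Macaulay.

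Next I would check the numerical hypotheses of Theorem~\ref{thm12}. As $I$ and $J$ have positive height, $\dim M=\dim T=d+2$, and as $IJ$ has positive height, $\dim M^\Delta=\dim\ov{\mathcal R}(IJ)=d+1$. It then remains only to verify $a^1(M)<0$ and $a^2(M)<0$, and this is the heart of the matter.

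To bound $a^1(M)$ I would transport the argument of Lemma~\ref{lemma3} to the bigraded setting, working in the first variable. Writing $M_1^+=\bigoplus_{r\geq 1,\,s\geq 0}\ov{I^rJ^s}t_1^rt_2^s$, there are short exact sequences of $\ZZ^2$-graded $T$-modules
$$0\longrightarrow M_1^+\longrightarrow M\longrightarrow \ov{\mathcal R}(J)\longrightarrow 0,\qquad 0\longrightarrow M_1^+(1,0)\longrightarrow M\longrightarrow \ov{\mathcal R^\prime}(I,J\,|\,I)\longrightarrow 0,$$
where $\ov{\mathcal R}(J)=\bigoplus_{s\geq 0}\ov{J^s}t_2^s$ is annihilated by $It_1$ and hence concentrated (together with its local cohomology) in first-degree $0$, and $\ov{\mathcal R^\prime}(I,J\,|\,I)=\bigoplus_{r,s\geq 0}\ov{I^rJ^s}/\ov{I^{r+1}J^s}$ has dimension $d+1$ by the grade computations of Section~2 (since $u_1=t_1^{-1}$ is a nonzerodivisor on $\ov{\mathcal R^\prime}$ with $\ov{\mathcal R^\prime}/(u_1)\cong\ov{\mathcal R^\prime}(I,J\,|\,I)$). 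Both cokernels therefore have $H^{d+2}_{\mathcal M}$ equal to zero. The first sequence then yields an isomorphism $[H^{d+2}_{\mathcal M}(M_1^+)]_{(k,q)}\cong[H^{d+2}_{\mathcal M}(M)]_{(k,q)}$ for all $k\neq 0$, while the shifted second sequence yields a surjection $[H^{d+2}_{\mathcal M}(M_1^+)]_{(k+1,q)}\twoheadrightarrow[H^{d+2}_{\mathcal M}(M)]_{(k,q)}$ for all $k$. Combining these produces surjections $[H^{d+2}_{\mathcal M}(M)]_{(k+1,q)}\twoheadrightarrow[H^{d+2}_{\mathcal M}(M)]_{(k,q)}$ for $k\geq 0$; since $M$ is finite over $T$ the top local cohomology vanishes for $k\gg 0$, and descending induction forces $[H^{d+2}_{\mathcal M}(M)]_{(k,q)}=0$ for all $k\geq 0$ and all $q$. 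Hence $a^1(M)<0$, and the symmetric argument in the second variable gives $a^2(M)<0$.

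With every hypothesis in place, Theorem~\ref{thm12} shows that $M^\Delta=\ov{\mathcal R}(IJ)$ is Cohen-Macaulay, completing the proof. The main obstacle is exactly the $a$-invariant step: the bookkeeping that the two cokernels drop dimension (so that their top local cohomology vanishes) and that $H^\bullet_{\mathcal M}(\ov{\mathcal R}(J))$ lives only in first-degree $0$, which together drive the vanishing. The finiteness and dimension claims quietly use that $R$ is analytically unramified and that $I,J,IJ$ all have positive height.
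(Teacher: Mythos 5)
Your proof is correct and its skeleton is the same as the paper's: realize $\ov{\mathcal R}(IJ)$ as the diagonal $M^\Delta$ of $M=\ov{\mathcal R}(I,J)$, check $\dim M=d+2$, $\dim M^\Delta=d+1$, show $a^1(M),a^2(M)<0$, and invoke Theorem \ref{thm12}. The difference lies in how the $a$-invariants are bounded. The paper does \emph{not} rerun the two-exact-sequence argument in the bigraded setting; instead it sets $B=\ov{\mathcal R}(I)$, takes the homogeneous ideal $K=\bigoplus_{n}JI^nt_1^n$ of positive height in $B$, verifies by a coefficient comparison that $\ov{K^s}=\bigoplus_n\ov{J^sI^n}t_1^n$, concludes $\ov{\mathcal R}_B(K)=\ov{\mathcal R}(I,J)$, and then quotes the single-graded Lemma \ref{lemma3} to get $a^2(M)=-1$ (and symmetrically $a^1(M)=-1$). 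That reduction buys exactly the step you had to work for: the vanishing of the top local cohomology of the two cokernels comes packaged inside Lemma \ref{lemma3}, applied over the base ring $B$. Your direct bigraded adaptation is a legitimate alternative and avoids the integral-closure computation $\ov{K^s}=\bigoplus_n\ov{J^sI^n}t_1^n$, but it puts the burden on you to justify that $\bigoplus_{r,s\geq 0}\ov{I^rJ^s}/\ov{I^{r+1}J^s}$ has dimension at most $d+1$. Your citation of ``the grade computations of Section 2'' is not quite right for this: those are proved only for $\m$-primary ideals in a two-dimensional Cohen--Macaulay ring, whereas here $I,J$ are merely of positive height in a $d$-dimensional analytically unramified ring, and your cokernel is the $\mathbb{N}^2$-graded part rather than $\ov{\mathcal R^\prime}/(u_1)$ itself. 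The claim is nonetheless true and easy to repair: the cokernel is a finitely generated bigraded module over a standard bigraded ring whose degree-$(0,0)$ piece is $R/\ov{I}$, and $\dim R/\ov{I}\leq d-1$ since $I$ has positive height, so its dimension is at most $(d-1)+2=d+1$; alternatively, $u_1$ is a unit of $R[t_1^{\pm 1},t_2^{\pm 1}]$ and hence a nonzerodivisor on $\ov{\mathcal R^\prime}$ in full generality, so $\dim\ov{\mathcal R^\prime}/(u_1)=d+1$ and your cokernel sits inside it. With that one step tightened, your argument is complete.
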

\begin{proof}
 Let $B=\ov{\mathcal R}(I)=\bigoplus_{n \geq 0}\ov{I^n}t_1^n$ and $K=\bigoplus_{n \geq 0}JI^nt_1^n$ be homogeneous 
ideal in $B$. First we prove $\ov{K^s}=\bigoplus_{n \geq 0} \ov{J^sI^n}t_1^n$. Let $zt_1^k \in 
R[t_1]$ be integral over $K^s$. Then 
\begin{eqnarray*}
 (zt_1^k)^n+b_1(zt_1^k)^{n-1}+\cdots+b_n=0
\end{eqnarray*}
 for some $b_i\in (K^s)^i$. Comparing coefficient of $t_1^{nk}$, we get 
$$z^n+c_1z^{n-1}+\cdots+c_n=0$$
for some $c_i\in (J^sI^k)^i$. Hence $z \in \ov{J^sI^k}$. Therefore 
$\ov{K^s}=\bigoplus_{n \geq 0} \ov{J^sI^n}t_1^n$ and $\ov{\mathcal R}_B(K)=\ov{\mathcal R}(I,J)$. 
Therefore by Lemma \ref{lemma3}, 
$a^2(\ov{\mathcal R}(I,J))=-1$. Similar argument shows that $a^1(\ov{\mathcal R}(I,J))=-1$. 
Therefore by Theorem \ref{thm12}, result follows.
\end{proof}

\noindent As a consequence of Theorem \ref{thm13} we get sufficient condition for 
vanishing of $\ov{e}_d(IJ)$.

\begin{corollary} \label{corollary} \label{corollary1}
 Let $(R,\m)$ be an analytically unramified Cohen-Macaulay local ring of 
dimension $d$. If $\ov{R}(I,J)$ 
is Cohen-Macaulay then $\ov{e}_d(IJ)=0$. 
\end{corollary}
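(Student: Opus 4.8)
The plan is to deduce the vanishing of $\ov{e}_d(IJ)$ from the Cohen-Macaulayness of $\ov{\mathcal R}(IJ)$, which Theorem \ref{thm13} already supplies once I verify its hypotheses. First I would observe that since $(R,\m)$ is Cohen-Macaulay of dimension $d$ and $I,J$ are $\m$-primary (hence of positive height), Theorem \ref{thm13} applies directly: Cohen-Macaulayness of $\ov{\mathcal R}(I,J)$ forces $\ov{\mathcal R}(IJ)=\bigoplus_{n\geq 0}\ov{(IJ)^n}t^n$ to be Cohen-Macaulay. So the entire problem reduces to a single one-ideal statement: if the normal Rees algebra $\ov{\mathcal R}(K)$ of an $\m$-primary ideal $K=IJ$ is Cohen-Macaulay, then the top normal Hilbert coefficient $\ov{e}_d(K)$ vanishes.

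For that reduction step I would work with the associated graded data of the filtration $\{\ov{K^n}\}$. The key numerical input is the difference formula of Theorem \ref{difference formula} specialized to the single-ideal (diagonal) setting, which expresses $\ov{P}_K(n)-\lm(R/\ov{K^n})$ as an alternating sum of lengths of graded pieces of local cohomology modules of $\ov{\mathcal R}(K)$ supported in the irrelevant ideal. When $\ov{\mathcal R}(K)$ is Cohen-Macaulay, all but the top local cohomology module vanish, so this alternating sum collapses to a single term $(-1)^d\lm([H^{d+1}]_n)$ (up to the standard index shift between the Rees algebra and $R$). I would then invoke the $a$-invariant computation behind Lemma \ref{lemma3}: since $a(\ov{\mathcal R}(K))=-1$, the top local cohomology module is concentrated in negative degrees, so its degree-$0$ graded piece is zero. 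Evaluating the difference formula at the appropriate value of $n$ then yields that the constant term $(-1)^d\ov{e}_d(K)$ of the difference $\ov{P}_K-\ov H_K$ is zero.

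The main obstacle I anticipate is bookkeeping rather than conceptual: pinning down exactly which graded component of the top local cohomology module corresponds to the coefficient $\ov{e}_d(IJ)$, and matching the index conventions between the filtration $\{\ov{K^n}\}$ living over $R$ and the standard-graded Cohen-Macaulay algebra $\ov{\mathcal R}(K)$. Because $a(\ov{\mathcal R}(K))=-1$ kills precisely the nonnegative-degree pieces, and $\ov{e}_d(K)$ reads off the value of the polynomial-versus-actual-length discrepancy in the stable range, the vanishing should fall out cleanly once the degree is correctly aligned. I would therefore phrase the argument so that the Cohen-Macaulay hypothesis leaves a single cohomological term, and the $a$-invariant bound forces that term to vanish in the relevant degree, giving $\ov{e}_d(IJ)=0$.
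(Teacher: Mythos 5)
Your first step is identical to the paper's: apply Theorem \ref{thm13} to conclude that $\ov{\mathcal R}(IJ)$ is Cohen--Macaulay. From there the two arguments diverge. The paper finishes by quoting two reduction-number results: Viet's theorem that Cohen--Macaulayness of $\ov{\mathcal R}(IJ)$ forces $\ov{r}(IJ)\leq d-1$, and Marley's result that $\ov{r}(IJ)\leq d-1$ implies $\ov{e}_d(IJ)=0$. You instead argue cohomologically via the single-ideal Grothendieck--Serre difference formula, Cohen--Macaulay vanishing, and the $a$-invariant bound $a(\ov{\mathcal R}(IJ))=-1$ from Lemma \ref{lemma3}, then evaluate at $n=0$, where indeed $\ov{P}_{IJ}(0)=(-1)^d\ov{e}_d(IJ)$ and $\ov{H}_{IJ}(0)=0$. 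This route is viable and arguably more self-contained (it reuses machinery already developed in the paper rather than two external citations), and it yields the stronger conclusion $\ov{P}_{IJ}(n)=\lm(R/\ov{(IJ)^n})$ for all $n\geq 0$; the paper's route is shorter and avoids any cohomological transfer. The one place your sketch genuinely needs care, beyond ``bookkeeping,'' is the bridge between the two supports: the difference formula is stated for $H^i_{\mathcal R_+}(\ov{\mathcal R^\prime}(IJ))$, local cohomology of the \emph{extended} Rees ring with support in the irrelevant ideal, whereas Cohen--Macaulayness and Lemma \ref{lemma3} control $H^i_{\mathcal M}(\ov{\mathcal R}(IJ))$ for the maximal homogeneous ideal $\mathcal M$ of the ordinary Rees ring. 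Passing from one to the other requires the Mayer--Vietoris/Brodmann--Sharp comparison (the one-ideal analogue of Theorem \ref{thm6}), which in turn uses that $R$ itself is Cohen--Macaulay of dimension $d$ so that $H^i_{\m}(R)=0$ for $i<d$; you should state this step explicitly rather than fold it into index alignment. With that bridge supplied, your argument closes correctly.
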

\begin{proof}
 Since $\ov{R}(I,J)$ is Cohen-Macaulay by Theorem \ref{thm13}, $\ov{R}(IJ)$ is Cohen-Macaulay. 
Therefore by \cite[Theorem 2.3]{viet}, $\ov{r}(IJ) \leq d-1$. Hence by \cite[Corollary 3.17]{marleythesis}, 
$\ov{e}_d(IJ)=0$.
\end{proof}

\noindent Converse of Theorem \ref{corollary1} is true in $2-$dimensional 
analytically unramified Cohen-Macaulay local ring. In order to prove 
this we need the theorem due to D. Rees which asserts that 
$\ov{e}_2(IJ) \geq \max\{\ov{e}_2(I),\ov{e}_2(J)\}$. 
We give a cohomological proof. 

\begin{lemma} \label{lcws++}
 Let $(R,\m)$ be an analytically unramified Cohen-Macaulay local ring 
of dimension $2$ with infinite residue field and $u=u_1u_2$. Then for all $r,s \geq -1$
$$[H^0_{\ov{\mathcal R^\prime}_{++}}(\ov{\mathcal R^\prime}/u\ov{\mathcal R^\prime})]_{(r,s)}=0.$$
\end{lemma}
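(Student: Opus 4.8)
The plan is to compute $H^0_{\ov{\mathcal R^\prime}_{++}}$ of $M:=\ov{\mathcal R^\prime}/u\ov{\mathcal R^\prime}$ directly, as a torsion submodule. First I would identify the graded pieces: since $u=u_1u_2$ has bidegree $(-1,-1)$, one has $(u\ov{\mathcal R^\prime})_{(r,s)}=u\cdot\ov{\mathcal R^\prime}_{(r+1,s+1)}$, which inside $\ov{\mathcal R^\prime}_{(r,s)}=\ov{I^rJ^s}t_1^rt_2^s$ is the submodule $\ov{I^{r+1}J^{s+1}}t_1^rt_2^s$, so that
$$M_{(r,s)}=\ov{I^rJ^s}/\ov{I^{r+1}J^{s+1}}.$$
Fix $r,s\geq-1$ and let $\bar z\in[H^0_{\ov{\mathcal R^\prime}_{++}}(M)]_{(r,s)}$ be represented by $z\in\ov{I^rJ^s}$; the goal is to prove $z\in\ov{I^{r+1}J^{s+1}}$, i.e. $\bar z=0$. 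Since $(a,b)$ is a good joint reduction with $a,b$ nonzerodivisors, the element $abt_1t_2=(at_1)(bt_2)$ has bidegree $(1,1)$ and hence lies in $\ov{\mathcal R^\prime}_{++}$; so the torsion hypothesis $(\ov{\mathcal R^\prime}_{++})^k\bar z=0$ gives in particular $(abt_1t_2)^k\bar z=0$ for some $k\geq1$, that is,
$$a^kb^kz\in\ov{I^{r+k+1}J^{s+k+1}}.$$

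The key ingredient is the iterated form of the good joint reduction relations,
$$(a^k)\cap\ov{I^{p+k}J^q}=a^k\ov{I^pJ^q}\quad\text{and}\quad(b^k)\cap\ov{I^pJ^{q+k}}=b^k\ov{I^pJ^q}\qquad(p,q\geq0,\ k\geq1).$$
The second equality is established inside the proof of Lemma \ref{lemma2}, and the first follows by the symmetric induction on $k$ whose base case $k=1$ is the defining property of a good joint reduction. Granting these, I peel the two factors off one at a time. Writing $a^kb^kz\in(a^k)\cap\ov{I^{(r+1)+k}J^{s+k+1}}$ and applying the first relation with $p=r+1$, $q=s+k+1$ gives $a^kb^kz\in a^k\ov{I^{r+1}J^{s+k+1}}$; cancelling the nonzerodivisor $a^k$ yields $b^kz\in\ov{I^{r+1}J^{s+k+1}}$. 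Next, writing $b^kz\in(b^k)\cap\ov{I^{r+1}J^{(s+1)+k}}$ and applying the second relation with $p=r+1$, $q=s+1$ gives $b^kz\in b^k\ov{I^{r+1}J^{s+1}}$; cancelling $b^k$ yields $z\in\ov{I^{r+1}J^{s+1}}$, as desired. Hence $[H^0_{\ov{\mathcal R^\prime}_{++}}(M)]_{(r,s)}=0$ for all $r,s\geq-1$.

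The hypothesis $r,s\geq-1$ enters exactly in the two cancellation steps: applying the $a$-relation needs the $I$-exponent $p=r+1$ to be nonnegative, i.e. $r\geq-1$, and applying the $b$-relation needs the $J$-exponent $q=s+1$ to be nonnegative, i.e. $s\geq-1$; thus the stated range is precisely what the argument consumes, and I expect the bound to be sharp. The only point demanding care — the main, though mild, obstacle — is to verify the iterated relations in the correct index range, in particular allowing a vanishing exponent $p=0$ or $q=0$ (which is permitted because Lemma \ref{lemma2} is stated for $r,s\geq0$), and to record that $a,b$, hence $a^k,b^k$, are nonzerodivisors so that the cancellations are legitimate; this holds because $(a,b)$ is a regular sequence, as already used in Section $3$.
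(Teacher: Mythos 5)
Your proof is correct and follows essentially the same route as the paper: the paper notes that $(ab)\cap\ov{I^{r+1}J^{s+1}}=ab\,\ov{I^rJ^s}$ for $r,s\geq 0$ makes multiplication by $(ab)^*$ injective on the graded pieces in question, and then kills $z^*$ because some power of $(ab)^*$ annihilates it; your peeling off of $a^k$ and then $b^k$ via the iterated intersection identities is just the $k$-fold composition of that same injectivity, resting on the identical good-joint-reduction property. The bookkeeping of exponents and the use of $a,b$ as nonzerodivisors are handled correctly.
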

\begin{proof}
 Let $(a,b)$ be a good joint reduction of $\{\ov{I^rJ^s}\}$. Then for all $r,s\geq 0,$
$$(ab) \cap \ov{I^{r+1}J^{s+1}}=ab \ov{I^rJ^s}.$$
Let $(ab)^*$ denote the image of $abt_1t_2$ in $[\ov{\mathcal R^\prime}/u\ov{\mathcal R^\prime}]_{(1,1)}$. 
Then $[(0:_{\ov{\mathcal R^\prime}/u\ov{\mathcal R^\prime}} (ab)^*)]_{(r,s)}=0$ 
for all $r,s \geq -1$. For fixed $r,s \geq -1,$ let $z^* \in [H^0_{\ov{\mathcal R^\prime}_{++}}
(\ov{\mathcal R^\prime}/u\ov{\mathcal R^\prime})]_{(r,s)}$ for some $z \in \ov{I^rJ^s}$. 
Then $(ab)^{*n}z^* =0$ for some $n$. Therefore $z^* =0$.  
\end{proof}

\begin{theorem} \label{thm6}
 Let $(R,\m)$ be an analytically unramified Cohen-Macaulay local ring of dimension $d$ 
with infinite residue field. Put
$M=(u,\mathcal{R}^\prime_{++})\mathcal{R}^\prime$ and
$N=\mathcal{R}^\prime_{++}$. 
Then \\
\noindent \rm(1) there exist isomorphisms 
$$H^i_M(\ov{\mathcal{R}^\prime}) \longrightarrow H^i_N(\ov{\mathcal{R}^\prime})
\mbox{ for } i=0,1,\ldots,d-1$$
and an exact sequence 
\begin{eqnarray*}
 0 \longrightarrow H^d_M(\ov{\mathcal{R}^\prime}) \longrightarrow
H^d_N(\ov{\mathcal{R}^\prime}) \longrightarrow H^d_\m(R)[t_1,t_2,u_1,u_2] 
\longrightarrow H^{d+1}_M(\ov{\mathcal{R}^\prime}) \longrightarrow 0.
\end{eqnarray*}
\rm(2) For $d= 2,$
$H^0_{N}(\ov{\mathcal{R}^\prime})=0$ and $[H^1_N(\ov{\mathcal{R}^\prime})]_{(r,s)}=0$ 
for all $r,s \geq 0$.
\end{theorem}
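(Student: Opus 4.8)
The plan is to prove both statements by comparing the two support ideals $M=(u,\mathcal{R}^\prime_{++})$ and $N=\mathcal{R}^\prime_{++}$ and using the ``change of support'' long exact sequence, then feeding in the vanishing already established in Lemma \ref{lcws++}. For part (1), I would first observe that $M$ and $N$ differ only by the single element $u=u_1u_2$, so that the quotient of the two graded rings $\ov{\mathcal{R}^\prime}/u\ov{\mathcal{R}^\prime}$ controls the comparison. Concretely, I would use the exact sequence of $\ov{\mathcal{R}^\prime}$-modules
\begin{eqnarray*}
0 \longrightarrow \ov{\mathcal{R}^\prime} \buildrel u\over\longrightarrow \ov{\mathcal{R}^\prime} \longrightarrow \ov{\mathcal{R}^\prime}/u\ov{\mathcal{R}^\prime} \longrightarrow 0
\end{eqnarray*}
together with the fact that, after inverting $u$, one has $(\ov{\mathcal{R}^\prime})_u \cong R[t_1,t_2,u_1,u_2]_u$, so the local cohomology of the ``difference'' is governed by $H^d_\m(R)[t_1,t_2,u_1,u_2]$. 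The isomorphisms $H^i_M \cong H^i_N$ for $i \leq d-1$ and the four-term sequence in degree $d$ should then drop out of the associated long exact sequence in local cohomology, once one checks that the contribution of the extra element $u$ to the cohomology with support in $N$ only appears in top degree. The key computation here is identifying the cokernel term as $H^d_\m(R)[t_1,t_2,u_1,u_2]$; this requires knowing that $R$ is Cohen-Macaulay of dimension $d$, so $H^i_\m(R)=0$ for $i<d$, which kills all lower contributions and isolates the stated exact sequence.

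For part (2), specialize to $d=2$. I would derive the vanishing of $H^0_N(\ov{\mathcal{R}^\prime})$ and the graded vanishing of $H^1_N(\ov{\mathcal{R}^\prime})$ in nonnegative bidegrees from the corresponding statements with support $M$, via the isomorphisms in part (1): since $i=0,1$ both satisfy $i \leq d-1=1$, we have $H^0_N(\ov{\mathcal{R}^\prime}) \cong H^0_M(\ov{\mathcal{R}^\prime})$ and $H^1_N(\ov{\mathcal{R}^\prime}) \cong H^1_M(\ov{\mathcal{R}^\prime})$. The module $H^0_M(\ov{\mathcal{R}^\prime})$ is the $u$-power-torsion-plus-$\mathcal{R}^\prime_{++}$-torsion submodule of $\ov{\mathcal{R}^\prime}$; since $\ov{\mathcal{R}^\prime}$ is a torsion-free (indeed domain-like, as an integral closure) extension and $u$ is a nonzerodivisor, this torsion submodule vanishes, giving $H^0_N(\ov{\mathcal{R}^\prime})=0$. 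For $H^1_N$ in bidegrees $(r,s)$ with $r,s\geq 0$, I would use the same short exact sequence tensored with the $u$-multiplication map to relate $[H^1_N(\ov{\mathcal{R}^\prime})]_{(r,s)}$ to $[H^0_N(\ov{\mathcal{R}^\prime}/u\ov{\mathcal{R}^\prime})]_{(r,s)}$, which Lemma \ref{lcws++} shows vanishes for all $r,s\geq -1$.

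The main obstacle I anticipate is making the change-of-support comparison fully rigorous in the bigraded setting: one must verify that $N=\mathcal{R}^\prime_{++}$ and $M=(u,N)$ generate, up to radical, support differing exactly by the locus cut out by $u$, and that the cohomology with support in the added element $u$ behaves as in the one-variable case. I would handle this by passing to $(\ov{\mathcal{R}^\prime})_u$ and using the standard identification of local cohomology of a localization, so that the ``extra'' cohomology is precisely $H^{*}_\m(R)$ tensored up to the Laurent ring $R[t_1,t_2,u_1,u_2]$. The second delicate point is the bookkeeping of graded pieces in part (2): the shift by $u$ moves bidegrees by $(-1,-1)$, which is exactly why Lemma \ref{lcws++} is stated for $r,s\geq -1$ rather than $r,s\geq 0$, and I would make sure the multiplication-by-$u$ map connects $[H^1_N]_{(r,s)}$ to torsion in bidegree $(r,s)$ of $\ov{\mathcal{R}^\prime}/u\ov{\mathcal{R}^\prime}$ in the correct shifted range so that the hypothesis of the lemma applies.
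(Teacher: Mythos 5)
Your route is the paper's own: part (1) via the comparison sequence $\cdots\to H^i_M(\ov{\mathcal{R}^\prime})\to H^i_N(\ov{\mathcal{R}^\prime})\to H^i_N(\ov{\mathcal{R}^\prime}_u)\to H^{i+1}_M(\ov{\mathcal{R}^\prime})\to\cdots$ for $M=(u)+N$, the identification $\ov{\mathcal{R}^\prime}_u=R[t_1^{\pm1},t_2^{\pm1}]$ so that $H^i_N(\ov{\mathcal{R}^\prime}_u)\cong H^i_\m(R)[t_1,t_2,u_1,u_2]$, and Cohen--Macaulayness of $R$ to kill the terms with $i<d$; and part (2) by transporting to supports in $M$ through the isomorphisms of part (1) and then using the sequence $0\to\ov{\mathcal{R}^\prime}\stackrel{u}{\to}\ov{\mathcal{R}^\prime}\to\ov{\mathcal{R}^\prime}/u\ov{\mathcal{R}^\prime}\to0$ together with Lemma \ref{lcws++}. (Note, as you half-suspect, that this short exact sequence is not what produces the comparison sequence of part (1); that comes from the localization sequence for adding the single element $u$ to the support, which you also invoke.)

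The one genuine gap is the last step of part (2). The graded long exact sequence gives $[H^0_M(\ov{\mathcal{R}^\prime}/u\ov{\mathcal{R}^\prime})]_{(r,s)}\to[H^1_M(\ov{\mathcal{R}^\prime})]_{(r+1,s+1)}\stackrel{\mu_u}{\to}[H^1_M(\ov{\mathcal{R}^\prime})]_{(r,s)}$, so the vanishing supplied by Lemma \ref{lcws++} does not ``transfer'' to $[H^1_N(\ov{\mathcal{R}^\prime})]_{(r,s)}$ as your wording suggests: it only yields injectivity of multiplication by $u$ in the stated range of bidegrees. To conclude that the modules themselves vanish you must add that $H^1_M(\ov{\mathcal{R}^\prime})$ is $u$-power torsion because $u\in M$ (this is exactly why one must pass from $N$ to $M$ here, since $u\notin N$), and then iterate the resulting injections $[H^1_M]_{(p,q)}\hookrightarrow[H^1_M]_{(p-1,q-1)}$ against the annihilating power of $u$; this torsion-plus-iteration argument is the content hidden in the paper's ``hence'' and is absent from your plan. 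A smaller point: your justification of $H^0_N(\ov{\mathcal{R}^\prime})=0$ should rest only on $u$ being a nonzerodivisor on $\ov{\mathcal{R}^\prime}$ (it is a unit of the Laurent ring $R[t_1^{\pm1},t_2^{\pm1}]$ containing $\ov{\mathcal{R}^\prime}$), not on $\ov{\mathcal{R}^\prime}$ being ``domain-like''; $R$ is only assumed analytically unramified and Cohen--Macaulay, so neither $R$ nor $\ov{\mathcal{R}^\prime}$ need be a domain.
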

\begin{proof}
 \rm(1) Since $\ov{\mathcal R^\prime}_{u}=R[t_1,t_2,u_1,u_2],$ we have 
$$H^i_\m(R)[t_1,t_2,u_1,u_2] \cong H^i_\m(R[t_1,t_2,u_1,u_2]) \cong
H^i_N(\ov{\mathcal{R}^\prime}_u).$$
See \cite[Theorem 4.3.2]{bs}. Therefore we get a long exact sequence of local
cohomology modules 
\begin{eqnarray*}
  \cdots \rightarrow H^i_M(\ov{\mathcal R^\prime}) \rightarrow
H^i_N(\ov{\mathcal R^\prime}) \rightarrow
 H^i_\m(R)[t_1,t_2,u_1,u_2] \rightarrow H^{i+1}_M(\ov{\mathcal{R}^\prime})
\rightarrow \cdots . 
\end{eqnarray*}
See \cite[Exercise 5.1.22]{bs}.
Since $R$ is Cohen-Macaulay, $H^i_\m(R)=0$ for $i=0,\ldots,d-1. $ This gives the
desired result.\\
\rm(2) Since $N\ov{\mathcal{R}^\prime}$ and 
$\ov{\mathcal{R}^\prime}_{++}$ have the same radical,
$$H^i_{N}(\ov{\mathcal{R}^\prime})\cong
H^i_{N\ov{\mathcal{R}^\prime}}(\ov{\mathcal{R}^\prime}) \cong
H^i_{\ov{\mathcal{R}^\prime}_{++}}
(\ov{\mathcal{R}^\prime}).$$ 
Consider the exact sequence 
\begin{eqnarray*}
 \diagram
0 \rto& \ov{\mathcal{R}^\prime} \rto^{u}& \ov{\mathcal{R}^\prime} \rto& \ov{\mathcal{R}^\prime}/u\ov{\mathcal R^\prime} \rto &0. 
\enddiagram
\end{eqnarray*}
This gives a long exact sequence 
$$
\cdots \longrightarrow [H^i_M(\ov{\mathcal R^\prime})]_{(r+1,s+1)} \longrightarrow [H^i_M(\ov{\mathcal R^\prime})]_{(r,s)} 
\longrightarrow [H^i_M(\ov{\mathcal R^\prime}/u\ov{\mathcal R^\prime})]_{(r,s)} \longrightarrow \cdots.
$$
By Lemma \ref{lcws++}, for $r,s \geq -1,$ the map 
$$\mu_u: [H^1_M(\ov{\mathcal R^\prime})]_{(r+1,s+1)} \longrightarrow [H^1_M(\ov{\mathcal R^\prime})_{(r,s)}$$
is injective. Hence $[H^1_M(\ov{\mathcal R^\prime})]_{(r,s)}=0$ for all $r,s \geq 0$. 
Since $H^i_M(\ov{\mathcal R^\prime})=H^i_N(\ov{\mathcal R^\prime})$ for $i=1,2$ 
result follows. 
\end{proof}

\begin{theorem} \cite[Theorem 2.4(iii)]{rees} \label{thm8}
Let $(R,\m)$ be an analytically unramified Cohen-Macaulay local ring of
dimension $2.$ Then 
$$\ov{e}_2(IJ)\geq \max\{\ov{e}_2(I),\ov{e}_2(J)\}.$$ 
\end{theorem}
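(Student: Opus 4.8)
The plan is to prove $\ov{e}_2(IJ) \geq \ov{e}_2(I)$; the inequality $\ov{e}_2(IJ) \geq \ov{e}_2(J)$ then follows by interchanging the roles of $I$ and $J$, and the two together give the claim. Since the tools I intend to use (Theorem \ref{difference formula} and Theorem \ref{thm6}) are stated for rings with infinite residue field, I would first reduce to that case by passing to $R(X) = R[X]_{\m R[X]}$, which keeps $R$ analytically unramified and Cohen-Macaulay of dimension $2$ while preserving all the normal Hilbert coefficients of $I$, $J$ and $IJ$.

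The heart of the argument is to evaluate the difference formula along the boundary line $s = 0$. For all $r, s \geq 0$, Theorem \ref{difference formula} gives
$$\ov{P}_{I,J}(r,s) - \lambda(R/\ov{I^rJ^s}) = \sum_{i=0}^{2}(-1)^i \lambda([H^i_{\mathcal{R}^\prime_{++}}(\ov{\mathcal{R}^\prime})]_{(r,s)}).$$
By Theorem \ref{thm6}(2), $H^0_{\mathcal{R}^\prime_{++}}(\ov{\mathcal{R}^\prime}) = 0$ and $[H^1_{\mathcal{R}^\prime_{++}}(\ov{\mathcal{R}^\prime})]_{(r,s)} = 0$ for all $r, s \geq 0$, so only the $i = 2$ term survives. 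Specializing to $s = 0$, where $\ov{I^r J^0} = \ov{I^r}$, I obtain for every $r \geq 0$
$$\ov{P}_{I,J}(r,0) - \lambda(R/\ov{I^r}) = \lambda([H^2_{\mathcal{R}^\prime_{++}}(\ov{\mathcal{R}^\prime})]_{(r,0)}) \geq 0,$$
the right-hand side being a length, hence nonnegative.

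To extract the coefficient inequality I would let $r \to \infty$. For $r \gg 0$ we have $\lambda(R/\ov{I^r}) = \ov{P}_I(r)$, so the left-hand side becomes $\ov{P}_{I,J}(r,0) - \ov{P}_I(r)$. Setting $s = 0$ kills every term of the bivariate polynomial carrying a positive $J$-degree, leaving $\ov{P}_{I,J}(r,0) = e(I)\binom{r+1}{2} - \ov{e}_1(I) r + \ov{e}_2(IJ)$, where I have used Theorem \ref{thm1} to identify $\ov{e}_{(2,0)} = e(I)$ and $\ov{e}_{(1,0)} = \ov{e}_1(I)$, together with the remark $\ov{e}_{(0,0)} = \ov{e}_2(IJ)$. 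Since $\ov{P}_I(r) = e(I)\binom{r+1}{2} - \ov{e}_1(I) r + \ov{e}_2(I)$, the top two coefficients cancel and $\ov{P}_{I,J}(r,0) - \ov{P}_I(r) = \ov{e}_2(IJ) - \ov{e}_2(I)$, a constant. Combined with the displayed nonnegativity this forces $\ov{e}_2(IJ) - \ov{e}_2(I) \geq 0$.

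I expect the main obstacle to be bookkeeping at the boundary rather than any deep new idea. One must confirm that the difference formula and the vanishing of $H^0$ and $H^1$ in Theorem \ref{thm6} genuinely apply at $s = 0$ (they do, since both are asserted for all $r, s \geq 0$), and that restricting $\ov{P}_{I,J}$ to $s = 0$ reproduces the honest single-variable polynomial $\ov{P}_I$ in its leading two coefficients while the constant term jumps from $\ov{e}_2(I)$ to $\ov{e}_2(IJ)$. Getting that coefficient comparison exactly right, together with the harmless reduction to infinite residue field, is the only place where care is needed.
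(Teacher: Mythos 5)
Your proposal is correct and follows essentially the same route as the paper: apply the difference formula at $(r,0)$, use Theorem \ref{thm6}(2) to kill the $H^0$ and $H^1$ terms so that $\ov{P}_{I,J}(r,0)-\lm(R/\ov{I^r})$ equals a nonnegative length, and compare coefficients for $r\gg 0$ to get $\ov{e}_2(IJ)-\ov{e}_2(I)\geq 0$. The only difference is your explicit (and harmless) reduction to the infinite residue field case via $R(X)$, which the paper leaves implicit.
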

\begin{proof} 
Using Theorem \ref{difference formula} and \ref{thm6},
\begin{eqnarray*}
 \ov{P}_{(I,J)}(r,0)-\ov{H}_{(I,J)}(r,0)=\lm([H^2_{\mathcal{R}^\prime_{++}}(\ov{
\mathcal{R}^\prime})]_{(r,0)}) \mbox{ for all }r \geq 0. 
\end{eqnarray*}
Hence
\begin{eqnarray*}
 \ov{P}_I(r)-\ov{H}_I(r)+[\ov{e}_2(IJ)-\ov{e}_2(I)] &=&
\lm([H^2_{\mathcal{R}^\prime_{++}}(\ov{\mathcal{R}^\prime})]_{(r,0)}) \geq 0.
\end{eqnarray*}
Taking $r \gg0$ we get $\ov{e}_2(IJ) \geq \ov{e}_2(I).$ Similarly $\ov{e}_2(IJ)
\geq \ov{e}_2(J).$ Hence 
$\ov{e}_2(IJ) \geq \max\{\ov{e}_2(I),\ov{e}_2(J)\}.$
 \end{proof}
 
\noindent Following theorem gives necessary and sufficient condition 
for vanishing of $\ov{e}_2(IJ)$. E. Hyry proved 
equivalence of $(2)$ and $(3)$ for $(I,J)-$adic 
case \cite[Corollary 3.5]{hyry}. 
\begin{theorem} \label{thm14}
 Let $(R,\m)$ be an analytically unramified Cohen-Macaulay local ring of
dimension $2$ with infinite residue field. Then 
following statements are equivalent:
\begin{enumerate}
 \item $\ov{e}_{2}(IJ)=0,$
\item the normal joint reduction number of $I$ and $J$ is zero with respect to any
good joint reduction and $\ov{r}(I),\ov{r}(J) \leq 1,$ 
\item $\ov{\mathcal{R}}(I,J)$ is Cohen-Macaulay.
\end{enumerate} 
In this case $\ov{P}_{I,J}(r,s)=\lm(R/\ov{I^rJ^s})$ for all $r,s \geq 0$ and 
$$\ov{P}_{I,J}(x,y)=\ov{P}_I(x)+e(I|J)xy+\ov{P}_J(y).$$
\end{theorem}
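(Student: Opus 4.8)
The plan is to prove the cycle $(1)\Leftrightarrow(2)$, $(3)\Rightarrow(1)$ and $(2)\Rightarrow(3)$, and then to read off the two displayed formulas. The starting point is the identity obtained from Theorem \ref{thm3} at $(r,s)=(0,0)$, where $g_0(I,J)=h_0(I,J)=0$ and $\lm(R/\ov{I^0J^0})=0$, namely $\lm([H^2_{(at_1,bt_2)}(\ov{\mathcal{R}^\prime})]_{(0,0)})=\ov{e}_2(I)+\ov{e}_2(J)-\ov{e}_2(IJ)$. Being a length, the left side is nonnegative; specialising to $J=I$ and using $\ov{e}_2(I)=\ov{e}_2(I^2)$ yields the nonnegativity $\ov{e}_2(I),\ov{e}_2(J)\geq 0$, which I shall use repeatedly.

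For $(1)\Leftrightarrow(2)$ I would combine this with Rees's inequality $\ov{e}_2(IJ)\geq\max\{\ov{e}_2(I),\ov{e}_2(J)\}$ (Theorem \ref{thm8}). The chain $0\leq\max\{\ov{e}_2(I),\ov{e}_2(J)\}\leq\ov{e}_2(IJ)$ shows that $\ov{e}_2(IJ)=0$ forces $\ov{e}_2(I)=\ov{e}_2(J)=0$, hence $\ov{e}_2(IJ)=\ov{e}_2(I)+\ov{e}_2(J)$; conversely this equation together with $\ov{e}_2(I)=\ov{e}_2(J)=0$ returns $\ov{e}_2(IJ)=0$. Next, Theorem \ref{thm7}, whose condition $\ov{e}_2(IJ)=\ov{e}_2(I)+\ov{e}_2(J)$ is independent of the chosen reduction, converts this equation into vanishing of the normal joint reduction number with respect to every good joint reduction, while Marley's theorem \cite[Corollary 3.8]{marleythesis} with $k=0$ converts $\ov{e}_2(I)=0$ into $\ov{r}(I)\leq 1$, since $\ov{P}_I(0)=\ov{e}_2(I)$ and $\lm(R/\ov{I^0})=0$ (and symmetrically for $J$). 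Reading these equivalences in both directions gives $(1)\Leftrightarrow(2)$.

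The implication $(3)\Rightarrow(1)$ is immediate from Corollary \ref{corollary1} with $d=2$. The heart of the matter is $(2)\Rightarrow(3)$: assuming the normal joint reduction number is zero, Theorem \ref{thm10} reduces Cohen-Macaulayness of $\ov{\mathcal R}(I,J)$ to that of the one-variable normal Rees algebras $\ov{\mathcal R}(I)$ and $\ov{\mathcal R}(J)$, so it suffices to show that $\ov{r}(I)\leq 1$ makes $\ov{\mathcal R}(I)$ Cohen-Macaulay (and likewise for $J$). I expect this single-ideal criterion to be the main obstacle, the remainder being formal. I would derive it either from the known one-ideal theorem that $\ov{e}_2(I)=0$ is equivalent to Cohen-Macaulayness of $\ov{\mathcal R}(I)$ in dimension two, of which the present result is the two-ideal analogue, or by rerunning the singly graded versions of the machinery developed above: the single-graded analogue of Theorem \ref{thm6} kills $H^0$ and the nonnegative graded pieces of $H^1$, the equivalence $\ov{r}(I)\leq 1\Leftrightarrow\ov{e}_2(I)=0$ forces $H^2_{\mathcal R(I)_+}(\ov{\mathcal{R}^\prime}(I))$ to vanish in nonnegative degrees, and the $a$-invariant computation $a(\ov{\mathcal R}(I))=-1$ of Lemma \ref{lemma3} controls the top cohomology, together giving $H^i_{\mathcal M}(\ov{\mathcal R}(I))=0$ for $i<\dim\ov{\mathcal R}(I)$.

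Finally, for the two displayed formulas I would argue as follows. The product formula is purely numerical: collecting the coefficients of Theorem \ref{thm1} and the Remark, $\ov{P}_{I,J}(x,y)=\ov{P}_I(x)+e(I|J)xy+\ov{P}_J(y)+[\ov{e}_2(IJ)-\ov{e}_2(I)-\ov{e}_2(J)]$, and the bracketed constant vanishes since all three coefficients are zero. For the pointwise identity, under $(2)$ the normal joint reduction number is zero, so the $(3)\Rightarrow(2)$ implication of Theorem \ref{thm4}, applied at each $(r,s)$ with $r,s\geq 0$, gives $[H^2_{(at_1,bt_2)}(\ov{\mathcal{R}^\prime})]_{(r,s)}=0$; Theorem \ref{thm3} then reads $\lm(R/\ov{I^rJ^s})=g_r(I,J)+h_s(I,J)+rs\,e(I|J)$ for all $r,s\geq 0$. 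Putting $s=0$ and then $r=0$, where $g_0(I,J)=h_0(I,J)=0$, identifies $g_r(I,J)=\lm(R/\ov{I^r})$ and $h_s(I,J)=\lm(R/\ov{J^s})$, and since $\ov{r}(I),\ov{r}(J)\leq 1$, Marley's theorem turns these into $\ov{P}_I(r)$ and $\ov{P}_J(s)$ for all $r,s\geq 0$. Hence $\lm(R/\ov{I^rJ^s})=\ov{P}_I(r)+\ov{P}_J(s)+rs\,e(I|J)=\ov{P}_{I,J}(r,s)$ for all $r,s\geq 0$.
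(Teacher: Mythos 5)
Your proof is correct, and the skeleton of the three equivalences matches the paper's: $(1)\Rightarrow(2)$ via Rees's inequality (Theorem \ref{thm8}) plus nonnegativity of $\ov{e}_2$, Theorem \ref{thm7} and Marley's criterion; $(2)\Rightarrow(3)$ via Theorem \ref{thm10} after reducing to Cohen--Macaulayness of $\ov{\mathcal R}(I)$ and $\ov{\mathcal R}(J)$; $(3)\Rightarrow(1)$ from Corollary \ref{corollary1}. Two of your steps genuinely differ from the paper. First, for the single-ideal input in $(2)\Rightarrow(3)$ you appeal to ``the known one-ideal theorem'' that $\ov{e}_2(I)=0$ forces $\ov{\mathcal R}(I)$ Cohen--Macaulay; the paper makes this precise by combining Huckaba--Marley (from $\ov{r}(I)\le 1$ one gets $\ov{e}_1(I)=\lm(\ov I/K)$ and hence $\ov{G}(I)$ Cohen--Macaulay) with Viet's theorem transferring this to $\ov{\mathcal R}(I)$ --- your first suggested route is exactly this assembly, so nothing is missing, but the citation should be pinned down. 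Second, and more substantively, your derivation of the two closing formulas is different and arguably cleaner: you obtain the polynomial identity $\ov{P}_{I,J}(x,y)=\ov{P}_I(x)+e(I|J)xy+\ov{P}_J(y)$ purely by comparing the Bhattacharya coefficients of Theorem \ref{thm1} (the discrepancy being exactly $\ov{e}_2(IJ)-\ov{e}_2(I)-\ov{e}_2(J)$), and the pointwise equality $\ov{P}_{I,J}(r,s)=\lm(R/\ov{I^rJ^s})$ from the vanishing of $[H^2_{(at_1,bt_2)}(\ov{\mathcal R^\prime})]_{(r,s)}$ for all $r,s\ge 0$ via Theorems \ref{thm4} and \ref{thm3}, identifying $g_r=\lm(R/\ov{I^r})$ and $h_s=\lm(R/\ov{J^s})$ by setting $s=0$ and $r=0$. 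The paper instead proves the explicit decomposition $\ov{I^rJ^s}=a^r\ov{J^s}+b^s\ov{I^r}$ by induction and computes lengths through Lemma \ref{lemma2} and the mixed multiplicity theorem; your route avoids that induction entirely and makes better use of the cohomological machinery already built, at the cost of not exhibiting the ideal-theoretic structure that the paper's computation reveals. A small bonus of your write-up is that nonnegativity of $\ov{e}_2(I)$ is derived internally from Theorem \ref{thm3} at $(0,0)$ with $J=I$ rather than quoted from Marley's thesis.
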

\begin{proof}
$(1) \Longrightarrow (2)$: Let $\ov{e}_{2}(IJ)=0.$ Since $\ov{e}_2(I) \geq 0$ in
an analytically unramified Cohen-Macaulay local
ring \cite[Proposition 3.23]{marleythesis}, $\ov{e}_2(I)=\ov{e}_2(J)=0$ by
Theorem \ref{thm8}. Therefore $\ov{r}(I),\ov{r}(J) \leq 1$ 
by \cite[Corollary 3.26]{marleythesis}. 
Hence the normal joint reduction number of $I$ and $J$ is zero
with respect to any good joint reduction $(a,b)$, by Theorem \ref{thm7}. \\
$(2) \Longrightarrow (3):$ Let the normal joint reduction number of $I$ and $J$ be
zero with respect to any good joint reduction and 
$\ov{r}(I),\ov{r}(J) \leq 1.$ Therefore $\ov{e}_1(I)=\lm(\ov{I}/K)$ for any minimal 
reduction $K$ of $\{\ov{I^n}\}$ and $\bigoplus_{n \geq 0}\ov{I^n}/\ov{I^{n+1}}$ is Cohen-Macaulay. 
See \cite[Theorem 4.7]{huckaba-marley}. Thus $\ov{\mathcal{R}}(I)$ is
Cohen-Macaulay by \cite[Theorem 2.3]{viet}. Similarly $\ov{\mathcal{R}}(J)$ 
is Cohen-Macaulay. Therefore by Theorem \ref{thm10}, $\ov{\mathcal R}(I,J)$ is Cohen-Macaulay.\\
$(3) \Longrightarrow (1):$ Follows from Corollary \ref{corollary}.

In order to establish the formula for $\ov{P}_{I,J}(x,y)$, let $(a,b)$ be a good joint reduction. 
Since the normal joint reduction number is
zero with respect to any good joint reduction, for all 
$r,s \geq 1$ we have
$$\ov{I^rJ^s}=a\ov{I^{r-1}J^s}+b\ov{I^rJ^{s-1}}.$$
Using induction on $r,s$ we get 
\begin{eqnarray*}\label{eq7}
\ov{I^rJ^s}=a^r\ov{J^s}+b^s\ov{I^r}
\end{eqnarray*}
for all $r,s \geq 1$. 
Therefore for all $r,s \geq 1$,
\begin{eqnarray*}
 \lm(R/\ov{I^rJ^s})&=& \lm(R/a^r\ov{J^s}+b^s\ov{I^r})\\
&=&\lm(R/(a^r,b^s))+\lm((a^r,b^s)/a^r\ov{J^s}+b^s\ov{I^r})\\
&=&rs\lm(R/(a,b))+\lm(R/\ov{I^r})+\lm(R/\ov{J^s}) \mbox{ by Lemma }\ref{lemma2}.
\end{eqnarray*} 
Since $\lm(R/(a,b))=e(I|J),$ by \cite[Theorem 17.4.9]{huneke-swanson}, 
\begin{eqnarray}\label{eq6}
\lm(R/\ov{I^rJ^s})=rse(I|J)+\lm(R/\ov{I^r})+\lm(R/\ov{J^s}).
\end{eqnarray}
If $r=0$ or $s=0$ the equation 
$$\lm(R/\ov{I^rJ^s})=rse(I|J)+\lm(R/\ov{I^r})+\lm(R/\ov{J^s})$$
is still true. Since $\ov{e}_2(I)=0$, $\ov{P}_I(r)=\lm(R/\ov{I^r})$ for all $r
\geq 0$ \cite[Corollary 3.8]{marleythesis}. Similarly $\ov{P}_J(s)=
\lm(R/\ov{J^s})$ for all $s \geq 0$. Therefore for all $r,s \geq 0$ 
$$\lm(R/\ov{I^rJ^s})=rse(I|J)+\ov{P}_I(r)+\ov{P}_J(s).$$ Hence
$$\ov{P}_{I,J}(x,y)=e(I|J)xy+\ov{P}_I(x)+\ov{P}_J(y).$$
\end{proof}

\noindent Following example illustrates the Theorem \ref{thm14}.
\begin{example}
 Let $R=(\mathbb{C}[X,Y,Z]/(X^2+Y^2+Z^2))_{(x,y,z)}$ and $I=J=\m=(x,y,z)$. Here 
$x,y,z$ denote image of $X,Y,Z$ in $R$. Since $G(\m)=\bigoplus_{n \geq 0}\m^n/\m^{n+1}
=R$ is reduced, $\ov{\m^n}=\m^n$ for all $n$. Therefore 
$\ov{G}(\m)=\bigoplus_{n \geq 0}\ov{\m^n}/\ov{\m^{n+1}}=G(\m)=R$. Hence 
\begin{eqnarray*}
 H(\ov{G}(\m),t)&=&\frac{1+t}{(1-t)^2}.
\end{eqnarray*}
Therefore for $n \geq 0$,
$$\lm(\m^{n-1}/\m^{n})=2n-1.$$
Since $(x,y)\m=\m^2,$ the normal joint reduction number of $\m$ and $\m$ is zero 
with respect to $(x,y)$ and 
$\ov{r}(\m)\leq 1$. Hence by \cite[Corollary 3.26]{marleythesis}, 
$\ov{e}_2(\m)=0$. Therefore $\ov{G}(\m)$ is Cohen-Macaulay and hence 
by \cite[Theorem 2.3]{viet} $\ov{R}(\m)$ is Cohen-Macaulay. By 
Theorem \ref{thm10}, $\ov{R}(\m,\m)$ is Cohen-Macaulay. Also 
$\ov{e}_2(\m^2)=\ov{e}_2(\m)=0$.
\end{example}

\end{document}